\newtheorem{theorem}{Theorem}[section]
\newtheorem{corollary}[theorem]{Corollary}
\newtheorem{lemma}[theorem]{Lemma}
\newtheorem{proposition}[theorem]{Proposition}
\newtheorem{question}[theorem]{Question}
\theoremstyle{definition}
\newtheorem{definition}[theorem]{Definition}
\newtheorem{remark}[theorem]{Remark}
\newtheorem{example}[theorem]{Example}
\numberwithin{equation}{section}
\newcommand{\Z}{\mathbb{Z}}
\newcommand{\N}{\mathbb{N}}
\newcommand{\Q}{\mathbb{Q}}
\newcommand{\R}{\mathbb{R}}
\newcommand{\primes}{\mathbb{P}}
\newcommand{\ord}{\mathrm{ord}}
\newcommand{\tr}{\mathrm{tr}}
\newcommand{\Aut}{\mathrm{Aut}}
\newcommand{\End}{\mathrm{End}}
\newcommand{\RCA}{\mathrm{RCA}}
\newcommand{\PAut}{\mathrm{PAut}}
\newcommand{\RAut}{\mathrm{RAut}}
\newcommand{\RPAut}{\mathrm{RPAut}}
\newcommand{\Sym}{\mathrm{Sym}}
\newcommand{\Alt}{\mathrm{Alt}}
\newcommand{\id}{\mathrm{id}}
\newcommand{\GL}{\mathrm{GL}}
\newcommand{\sm}[2]{\left(\begin{smallmatrix}#1 \\ #2\end{smallmatrix}\right)}
\newcommand\xqed[1]{%
  \leavevmode\unskip\penalty9999 \hbox{}\nobreak\hfill
  \quad\hbox{#1}}
\newcommand\qee{\xqed{\fullmoon}}
\newcommand{\ctrl}[2]{#1^{#2}} 
\newcommand{\gr}[1]{{\color{green!60!black}#1}}
\newcommand{\rr}[1]{{\color{red}#1}}
\newcommand{\bl}[1]{{\color{blue!70!white}#1}}
\newcommand{\lstair}[6]{
\draw (#1-1.5, #2-0.5) -- ++(0, 1) -- ++(1, 0) -- ++(0, 1) -- ++(2, 0) -- ++(0, -1) -- ++(-1, 0) -- ++(0, -1) -- ++(-2, 0);
\node () at (#1-1,#2) {\footnotesize #3};
\node () at (#1,#2) {\footnotesize #4};
\node () at (#1,#2+1) {\footnotesize #5};
\node () at (#1+1,#2+1) {\footnotesize #6};
}
\newcommand{\rstair}[6]{
\draw (#1-1.5, #2+0.5) -- ++(0, 1) -- ++(2, 0) -- ++(0, -1) -- ++(1, 0) -- ++(0, -1) -- ++(-2, 0) -- ++(0, 1) -- ++(-1, 0);
\node () at (#1-1,#2+1) {\footnotesize #3};
\node () at (#1,#2+1) {\footnotesize #4};
\node () at (#1,#2) {\footnotesize #5};
\node () at (#1+1,#2) {\footnotesize #6};
}
\newcommand{\rlstair}[6]{
\draw (#1-1.5, #2-0.5) -- ++(0, 1) -- ++(1, 0) -- ++(0, 1) -- ++(2, 0) -- ++(0, -1) -- ++(-1, 0) -- ++(0, -1) -- ++(-2, 0);
\node () at (#1-1,#2) {\reflectbox{\footnotesize #3}};
\node () at (#1,#2) {\reflectbox{\footnotesize #4}};
\node () at (#1,#2+1) {\reflectbox{\footnotesize #5}};
\node () at (#1+1,#2+1) {\reflectbox{\footnotesize #6}};
}
\newcommand{\rrstair}[6]{
\draw (#1-1.5, #2+0.5) -- ++(0, 1) -- ++(2, 0) -- ++(0, -1) -- ++(1, 0) -- ++(0, -1) -- ++(-2, 0) -- ++(0, 1) -- ++(-1, 0);
\node () at (#1-1,#2+1) {\reflectbox{\footnotesize #3}};
\node () at (#1,#2+1) {\reflectbox{\footnotesize #4}};
\node () at (#1,#2) {\reflectbox{\footnotesize #5}};
\node () at (#1+1,#2) {\reflectbox{\footnotesize #6}};
}
\begin{document}


\baselineskip=17pt


\title{Universal groups of cellular automata}

\author{Ville Salo\\
University of Turku\\ 
E-mail: vosalo@utu.fi}

\date{}

\maketitle


\renewcommand{\thefootnote}{}

\footnote{2020 \emph{Mathematics Subject Classification}: Primary 37B10; Secondary 20B27.}

\footnote{\emph{Key words and phrases}: full shift, automorphism group, reversible cellular automata.}

\renewcommand{\thefootnote}{\arabic{footnote}}
\setcounter{footnote}{0}


\begin{abstract}
We prove that the group of reversible cellular automata (RCA), on any alphabet $A$, contains a subgroup generated by three involutions which contains an isomorphic copy of every finitely generated group of RCA on any alphabet $B$. This result follows from a case study of groups of RCA generated by symbol permutations and partial shifts (equivalently, partitioned cellular automata) with respect to a fixed Cartesian product decomposition of the alphabet. For prime alphabets, we show that this group is virtually cyclic, and that for composite alphabets it is non-amenable. For alphabet size four, it is a linear group. For non-prime non-four alphabets, it contains copies of all finitely generated groups of RCA. We also prove this property for the group generated by RCA of biradius one on any full shift with large enough alphabet, and also for some perfect finitely generated groups of RCA.
\end{abstract}

\section{Introduction}

Automorphism groups of subshifts have been a topic of much interest in recent years \cite{Ol13,SaTo15d,Sa14d,CyKr16a,CoQuYa16,CyKr16b,DoDuMaPe16,DoDuMaPe17,CyFrKrPe18,FrScTa19,Sa16a,Sa19a,BaKaSa16},
with most results dealing with either the case of highly constrained subshifts such as minimal and low-complexity subshifts, or the case of weakly constrained subshifts such as SFTs. This paper is about the second case.

Reversible cellular automata or \emph{RCA} (on a finite alphabet $A$) are the automorphisms, i.e. shift-commuting self-homeomorphisms, of the full shift $A^\Z$, and form a group denoted by $\Aut(A^\Z)$. We write this group also as $\RCA(A)$, and as $\RCA(|A|)$ up to isomorphism. Since this group is not finitely generated \cite{BoLiRu88}, from the perspective of geometric group theory it is of interest to try to understand its finitely generated subgroups. In this paper, we construct ``universal'' such subgroups, with a maximal set of finitely-generated subgroups.

A simple way to construct RCA is the technique of partitioned cellular automata. Fix a Cartesian product decomposition $A = B_1 \times B_2 \times \cdots \times B_k$ of the finite alphabet $A$. The \emph{partial shifts} shift one of the tracks with respect to this decomposition of the alphabet, e.g. identifying $x \in A^\Z$ as $(y^1, y^2, \cdots, y^k) \in B_1^\Z \times B_2^\Z \times \cdots \times B_k^\Z$ in an obvious way, we map $\sigma_1(y^1, y^2, \cdots, y^k) = (\sigma(y^1), y^2, ..., y^k)$ where $\sigma$ is the usual shift map, and similarly we allow shifting the other tracks independently. The \emph{symbol permutations} apply the same permutation of $A$ in each position of $x \in A^\Z$. These maps are reversible, and thus any composition of them is as well.

When a partial shift and a symbol permutation are composed (in some fixed order), we obtain a \emph{partitioned RCA}. In this paper, we denote the group generated by symbol permutations and partial shifts by $\PAut[B_1; B_2; ..., B_k]$ -- this group contains the partitioned RCA and their inverses, but also several other things, see Section~\ref{sec:PAut} for details. The group $\PAut[B_1; B_2; ..., B_k]$ is a subgroup of $\langle \RCA_1(A^\Z) \rangle$, the group of RCA generated by those with biradius one (meaning the CA and its inverse both have radius one). When $n_1,n_2,...,n_k \in \N$, we also write $\PAut[n_1;...;n_k]$ for the abstract group $\PAut[B_1; B_2; ..., B_k]$ where $|B_i| = n_i$, up to isomorphism.


A theorem of \cite{Ka96} shows that up to passing to a subaction of the shift (and using the induced basis for the algebra of clopen sets), all RCA essentially come from composing partial shifts and symbol permutations. The ``symbol permutations'' for a subaction of course permute longer words, and are called \emph{block permutations} in \cite{Ka96}. Such block permutations, applied with several offsets (i.e.\ conjugated by the original shift action) can build any reversible cellular automaton $f$ with zero average information flow (meaning $h_-(f) = 1$ in the morphism of \cite{Ka96}, equivalently $f$ is \emph{inert} in symbolic dynamics terminology). Partial shifts are used to eliminate the flow of information.

In the theorem of \cite{Ka96}, one needs subactions of increasing index (i.e.\ longer and longer blocks) to construct cellular automata of increasing radii, and indeed this is necessary: one can show that no finite set of block permutations and partial shifts generates a group containing all reversible cellular automata. Our main result is that for any robust enough composite alphabet $B \times C$, even without passing to a subaction of the shift, RCA in $\PAut[B; C]$ can \emph{simulate} any RCA on any alphabet in the following algebraic sense. 

\begin{definition}
Let $G$ be a group. A subgroup $H \leq G$ is \emph{universal} if there is an embedding $G \hookrightarrow H$. It is \emph{f.g.-universal} if for every finitely generated subgroup $K \leq G$ there exists an embedding $K \hookrightarrow H$.
\end{definition}

\begin{restatable}{theorem}{thmMain}
\label{thm:Main}
If $m \geq 2, n \geq 3$, $\PAut[m; n]$ is f.g.-universal in $\RCA(mn)$.
\end{restatable}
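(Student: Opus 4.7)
The plan is to combine two ingredients: a reduction via Kari's theorem, and an explicit construction that simulates multi-track $\PAut$ groups inside $\PAut[m;n]$. For the reduction, I would invoke Kari's theorem \cite{Ka96}: every element of $\RCA(mn)$ becomes a composition of partial shifts and symbol permutations once we pass to the $\sigma^N$-subaction for some $N$ and choose an appropriate Cartesian decomposition of $A^N$, where $|A| = mn$. Applied to a finite generating set of a subgroup $K \leq \RCA(mn)$, with $N$ chosen uniformly, this realizes $K$ as a subgroup of some $\PAut[n_1; \ldots; n_\ell]$ with $\prod n_i = (mn)^N$. The theorem thus reduces to constructing an embedding $\PAut[n_1; \ldots; n_\ell] \hookrightarrow \PAut[m;n]$ for arbitrary such tuples.

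For the construction, I would proceed inductively, showing that $\PAut[m;n]$ simulates $\PAut[m;n;k]$ for suitable $k$, and then iterating to add more tracks. The hypothesis $n \geq 3$ gives crucial slack in the $C$-track: I would designate one symbol of $C$ as a \emph{marker} and build an invariant subshift $X \subseteq (B \times C)^\Z$ whose configurations carry a virtual third track in the positions singled out by the marker pattern. Symbol permutations of the virtual three-track alphabet that respect the marker structure lift to honest symbol permutations of $B \times C$ inside $\PAut[m;n]$. The virtual partial shift on the third track is more delicate: I would realize it as a conjugate of a genuine partial shift by a carefully chosen symbol permutation, so that it acts nontrivially only at marker positions, and express this conjugate in the generators of $\PAut[m;n]$ using commutators. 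Iterating the construction, each new virtual track exploits the slack created by the previous one, yielding an embedding $\PAut[n_1;\ldots;n_\ell] \hookrightarrow \PAut[m;n]$ for arbitrary track tuples.

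The main obstacle will be faithfulness. The simulated action lives naturally on the invariant subshift $X$, but the target group acts on the whole space $(B \times C)^\Z$, so I must promote faithfulness on $X$ to global faithfulness. My plan is to ensure that each non-shift generator of the simulating image acts as the identity outside $X$, so the kernel of the embedding can be detected on $X$ alone, and then to use a normal-form decomposition of elements of $\PAut[n_1; \ldots; n_\ell]$ — splitting each element into a shift-part recording the net translation of each track and a permutation-part that is a finite-support symbol permutation modulo shifts — and check that both pieces are recovered from the simulated action on $X$. Verifying that the constructed maps really lie in $\PAut[m;n]$ (i.e.\ decompose into honest partial shifts and symbol permutations on $B \times C$) will reduce to writing the needed conjugating permutations explicitly, which the condition $n \geq 3$ should make feasible.
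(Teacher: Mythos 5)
Your first step (recoding a finite generating set of $K \leq \RCA(mn)$ via \cite{Ka96} with a uniform block size $N$) is fine in spirit, but it only reduces the theorem to embedding arbitrary multi-track groups $\PAut[n_1;\ldots;n_\ell]$ into $\PAut[m;n]$, and that is where essentially all of the difficulty lives; the mechanisms you propose for it would fail as stated. A ``symbol permutation of the virtual three-track alphabet'' cannot lift to an honest symbol permutation of $B \times C$: symbol permutations act cellwise and cannot see the marker pattern or the data stored in neighbouring cells, so what you actually need are block permutations applied \emph{conditionally} on the surrounding context, and proving that such controlled permutations $\ctrl{\pi}{[u]_i}$ (for $\pi \in \Alt(C^{|u|})$ and $u$ unbordered) belong to $\PAut[m;n]$ is the technical heart of the problem. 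In the paper this occupies Lemma~\ref{lem:FiveCase}, which rests on the hypocenter/commutator control trick (Lemmas~\ref{lem:LowerCentralSeries}--\ref{lem:ControlGrowing}), the alternating-group hypergraph lemma (Lemma~\ref{lem:Hypergraph}), the universal reversible-gate lemma (Lemma~\ref{lem:UniversalGates}), and separate parity-dependent constructions for small alphabets; your sketch contains no substitute for any of this.

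The virtual partial shift is a more fundamental obstruction. Conjugating a genuine partial shift by a symbol permutation still translates an entire genuine track, and no finite product of commutators of bounded-radius maps can implement ``move each virtual-track symbol to the next marker position'': on the full shift $(B\times C)^\Z$ markers may be arbitrarily sparse, so that map has unbounded radius and is not a cellular automaton at all, and restricting to a marker-syndetic invariant subshift abandons the ambient full shift in which the embedding must live. The only known way to shift simulated data with bounded-radius maps is to confine each simulated configuration to a bounded region delimited by markers, where shifting becomes rotating a finite conveyor belt --- and this is exactly the paper's route: rather than embedding multi-track $\PAut$ groups, Lemma~\ref{lem:EvensUnderFIsEnough} directly simulates an arbitrary biradius-$r$ RCA on $C^\Z$ on conveyor-belt encodings under maximal runs $w^m$, using Kari's stairs decomposition ($|L||R| = |C|^{6r}$) to split one simulated step into even block permutations controlled by $[w]_i$ and $[ww]_i$ (evenness arranged by a doubled, primed copy of the data), with faithfulness coming from the occurrence of arbitrarily long runs $w^m$. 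So \cite{Ka96} does enter, but in a different role than in your reduction, and your faithfulness plan (identity off an invariant subshift plus a shift-part/permutation-part normal form) does not address the underlying radius obstruction that makes the naive virtual-track shift impossible.
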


This is proved in Section~\ref{sec:Universal}. The embeddings of finitely generated subgroups are dynamical, in the sense that we concretely simulate cellular automata on encoded configurations. It is difficult (though possible) to give a good abstract formulation for a ``dynamical embedding'', so we recommend the interested reader simply read the first five paragraphs of the proof of Lemma~\ref{lem:EvensUnderFIsEnough}.

The set of finitely generated subgroups of $\RCA(A)$ does not depend on $A$ as long as $|A| \geq 2$ by \cite{KiRo90}, so when $\PAut[m; n]$ is f.g.-universal in $\RCA(mn)$, it also contains a copy of every finitely generated subgroup of $\RCA(k)$ for any other $k \in \N_+$. For the same reason, the theorem implies that for any non-trivial alphabet $A$, $\RCA(A)$ contains an f.g.-universal finitely generated subgroup since it contains a copy of each $\PAut[m; n]$ (a stronger statement about sofic shifts is given below).

The group $\RCA(A)$ is neither amenable nor locally linear when $|A| \geq 2$, so the following result shows that Theorem~\ref{thm:Main} is optimal.

\begin{restatable}{theorem}{thmMainLinAme}
\label{thm:MainLinAme}
Let $m, n \geq 1$.
\begin{itemize}
\item $\PAut[m] \cong \PAut[1; m] \cong \Z \times S_m$, while
\item if $m \geq 2, n \geq 2$ then $\PAut[m; n]$ is nonamenable, and
\item $\PAut[2; 2]$ is a linear group, while
\item if $m \geq 2, n \geq 3$ then $\PAut[m; n]$ is not a subdirect product of finitely many linear groups.
\end{itemize}
\end{restatable}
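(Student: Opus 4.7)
The plan is to dispatch the four bullets separately: the first two by direct construction, and the last two by invoking Theorem~\ref{thm:Main} together with a structural analysis of the alphabet.

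For the first bullet, $\PAut[m] = \PAut[1;m]$ has exactly one partial shift, namely the full shift $\sigma$. Since $\sigma$ only rearranges positions while a symbol permutation $\pi \in S_m$ only rearranges letter values, the two commute. The group is therefore generated by a central $\sigma$ of infinite order together with the symmetric group $S_m$, and a routine argument (normal forms $\sigma^n \pi$ are all distinct) shows that there are no further relations, giving $\PAut[m] \cong \langle \sigma \rangle \times S_m \cong \Z \times S_m$.

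For the second bullet (nonamenability for $m,n \ge 2$), I would exhibit a nonabelian free subgroup via a ping-pong argument. Using the partial shifts $\sigma_1, \sigma_2$ together with a symbol permutation $\pi$ on $A = B_1 \times B_2$ that does not preserve the Cartesian decomposition (such $\pi$ exists whenever $|A| \ge 4$), one can build two RCA whose actions on clopen cylinders of $A^\Z$ satisfy ping-pong conditions. A concrete candidate pair is $\sigma_1$ and $\pi \sigma_1 \pi^{-1}$, since conjugation by a non-product-respecting $\pi$ genuinely couples the two tracks and makes the second element dynamically transverse to the first. The main technical step is the explicit combinatorial construction and verification of the ping-pong sets on cylinders.

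For the third bullet (linearity of $\PAut[2;2]$), the approach is to construct a faithful linear representation. The group is generated by the finite group $S_4$ and two commuting partial shifts, but its structure is subtle: $S_4$ does not normalize the subgroup generated by the partial shifts, because a non-product-respecting $\pi \in S_4$ conjugates $\sigma_1$ into something that is no longer a partial shift. My plan is to identify $\PAut[2;2]$ with a subgroup of $\GL_N(R)$ for a suitable commutative ring $R$ (for instance a Laurent polynomial ring like $\Z[x,y,x^{-1},y^{-1}]$) by finding a faithful module-theoretic action that accommodates all these conjugates simultaneously, or by exhibiting a direct isomorphism with a known linear group. The main obstacle is identifying the correct target and verifying faithfulness of the resulting representation.

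For the fourth bullet, I would invoke Theorem~\ref{thm:Main}: when $m \geq 2$ and $n \geq 3$, $\PAut[m;n]$ is f.g.-universal in $\RCA(mn)$, and therefore contains every finitely-generated subgroup of $\RCA$. It is known that $\RCA$ contains a finitely-generated simple non-linear subgroup $H$ (for example, a suitable copy of Thompson's group $V$). Being a subdirect product of linear groups is equivalent to being residually linear, a property inherited by subgroups. If $\PAut[m;n]$ had this property, then so would $H$; but since $H$ is simple, every nontrivial homomorphism from $H$ to a linear group is injective, making $H$ linear --- a contradiction. Hence $\PAut[m;n]$ is not a subdirect product of linear groups.
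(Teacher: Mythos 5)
Your first bullet is fine and matches the paper's Lemma~\ref{lem:Prime} (the shift is central, symbol permutations give an $S_m$, and the two intersect trivially). The problems are in the other three bullets.

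For the second bullet, what you give is a plan whose one concrete suggestion fails, and it fails precisely in the case where you have no fallback. The range $m,n\geq 2$ includes $\PAut[2;2]$, where Theorem~\ref{thm:Main} is unavailable (indeed that group is linear by Theorem~\ref{thm:Four}), so the ping-pong step you defer is the whole content. But in $\PAut[2;2]\cong \Z_2^2\rtimes\GL(2,\Z_2[\pmb x,\pmb x^{-1}])$ every symbol permutation is affine; taking $\pi$ to be the controlled-not $(a,b)\mapsto(a,a+b)$ (a genuinely track-mixing, ``non-product-respecting'' permutation), the pair $\sigma_1$ and $\pi\sigma_1\pi^{-1}$ corresponds to the lower-triangular matrices $\diag(\pmb x,1)$ and $\left(\begin{smallmatrix}\pmb x & 0\\ \pmb x+1 & 1\end{smallmatrix}\right)$, which generate a metabelian group; no choice of ping-pong sets can make these two elements free, and other natural choices of $\pi$ give commuting or again triangular pairs. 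So ``the main technical step'' is not a verification to be filled in later: the candidate must be changed and the case $m=n=2$ argued separately. The paper avoids this by a uniform explicit construction (Lemma~\ref{lem:FreeProduct}) embedding $\Z_m^\omega * \Z_n^\omega$ via controlled involutions, with freeness checked on an explicit configuration; nonamenability then follows from Lemma~\ref{lem:NonAmenable}, covering $(2,2)$ as well. Your third bullet is likewise only a declaration of intent: the missing idea is exactly the observation that \emph{all} permutations of $\Z_2^2$ are affine, so $\PAut[2;2]$ lies in (in fact equals) the group of maps $x\mapsto f(x)+a^\Z$ with $f$ linear, giving $\Z_2^2\rtimes\GL(2,\Z_2[\pmb x,\pmb x^{-1}])$ and then an $8$-dimensional representation over $\Z_2((\pmb x))$ by induction from the index-$4$ subgroup; your remark that $S_4$ does not normalize the partial shifts is true but is resolved by this affine structure, which your sketch never identifies.

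The fourth bullet rests on a false ingredient. Groups of RCA on full shifts are residually finite, and an infinite finitely-generated simple group has no nontrivial finite quotients, hence is not residually finite and cannot embed into $\RCA(A)$; in particular Thompson's group $V$ is \emph{not} a subgroup of $\RCA$, and since finite simple groups are linear, there is no simple non-linear subgroup available for your argument. Moreover, even with a correct non-linear f.g.\ witness (for instance any f.g.\ subgroup containing $\Z_2^n$ and $\Z_3^n$ for all $n$), invoking Theorem~\ref{thm:Main} would only yield that $\PAut[m;n]$ is not linear, which is weaker than the claim that it is not a subdirect product of (finitely many) linear groups. The paper proves the stronger statement by embedding $\Z_p^\omega * \Z_q^\omega$ for distinct primes $p,q$ (again Lemma~\ref{lem:FreeProduct}) and showing, via the bounded-exponent Lemma~\ref{lem:BoundedExp}, that in any finite subdirect product each projection of the wrong characteristic has finite image on one free factor, producing commuting nontrivial subgroups of the two factors and contradicting the free product structure (Lemma~\ref{lem:FreeProductNonLinear}, Theorem~\ref{thm:NonLinearNonAmenable}). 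Some such characteristic-splitting argument, or an appeal to a witness that is itself not residually linear, is needed; a simple subgroup cannot supply it here.
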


Both nonamenability and nonlinearity for $\PAut[m; n]$ for $m \geq 2, n \geq 3$ follow directly from f.g.-universality, but we give instead a uniform natural construction that proves the second and fourth item simultaneously by embedding groups of the form $\Z_k^\omega * \Z_\ell^\omega$, in Section~\ref{sec:LinAme}. Linearity of $\PAut[2; 2]$ is proved in Section~\ref{sec:Linear}, where we show that $\PAut(4) \cong \Z_2^2 \rtimes \GL(2, \Z_2[\pmb x, \pmb x^{-1}])$. 

\arraycolsep=1.2pt\def\arraystretch{1.2}
\begin{table}[h]
\begin{center}
\begin{tabular}{ |c|ccc| }
\hline
$m \geq 3$      & $\Z \times S_n$  & $\RCA(k)$ & $\RCA(k)$ \\
$m = 2$      & $\Z \times S_n$  & $\Z_2^2 \rtimes \GL(2, \Z_2[\pmb x, \pmb x^{-1}])$      & $\RCA(k)$ \\
$m = 1$      & 1   & $\Z \times S_n$     & $\Z \times S_n$  \\ 
\arrayrulecolor{black}\hline
             & $n = 1$ & $n = 2$    & $n \geq 3$  \\ 
\hline
\end{tabular}
\end{center}
\caption{The entry $G$ at $(m, n)$ means that finitely generated subgroups of $\PAut[m; n]$ are precisely the finitely generated subgroups of the group $G$; $k \geq 2$ is arbitrary.}
\label{tab:Summary}
\end{table}

Table~\ref{tab:Summary} gives the characterizations of f.g.\ subgroups of $\PAut[m; n]$, by giving, in each case, a well-known group whose f.g.\ subgroups are the same as those of $\PAut[m; n]$. 

We can also state the result in terms of alphabet size alone. Write $\PAut(A)$ for the group $\PAut[B_1; B_2; ..., B_k]$ seen through any bijection $\pi : A \to B_1 \times B_2 \times \cdots \times B_k$ where $|A| = |B_1||B_2| \cdots |B_k|$ is a full prime decomposition of $A$. The subgroup of $\RCA(A)$ obtained does not depend (even as a set) on the choice of the $B_i$ and that of $\pi$, see Section~\ref{sec:PAut}. Again up to isomorphism we write $\PAut(n)$ for the group $\PAut(A)$ where $|A| = n$.

\begin{restatable}{theorem}{thmMainSize}
\label{thm:MainSize}
Let $n \geq 2$.
\begin{itemize}
\item If $n \in \primes$, then $\PAut(n) \cong \Z \times S_n$.
\item If $n = 4$, then $\PAut(n) \cong \Z_2^2 \rtimes \GL(2, \Z_2[\pmb x, \pmb x^{-1}])$.
\item If $n \notin \primes \cup \{4\}$, then $\PAut(n)$ is f.g.-universal in $\RCA(n)$.
\end{itemize}
The group is virtually cyclic if and only if it is amenable if and only if $n \in \primes$. 
\end{restatable}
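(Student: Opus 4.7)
The plan is to deduce this from Theorems~\ref{thm:Main} and \ref{thm:MainLinAme} together with the identification $\PAut[2;2] \cong \Z_2^2 \rtimes \GL(2, \Z_2[\pmb x, \pmb x^{-1}])$ from Section~\ref{sec:Linear}, by a case split on $n$. The one structural ingredient I need is that whenever a product decomposition $n = mn'$ arises from grouping the primes of the full factorization $n = p_1 \cdots p_k$ into two blocks, the coarser $\PAut$ sits inside the finer one, i.e.\ $\PAut[m; n'] \leq \PAut[p_1; \ldots; p_k] = \PAut(n)$. This holds because symbol permutations depend only on $A$ and not on the decomposition, while each of the two partial shifts of the coarser decomposition is a product of partial shifts of the finer one; the basepoint-independence of $\PAut(A)$ noted in Section~\ref{sec:PAut} lets me identify the target group with $\PAut(n)$ regardless of how the primes are labelled.

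With this inclusion in hand, the three bullets fall out. For $n \in \primes$ the full prime factorization is trivial, so $\PAut(n) = \PAut[n] \cong \Z \times S_n$ by the first item of Theorem~\ref{thm:MainLinAme}. For $n = 4$, $\PAut(4) = \PAut[2;2]$ and the identification is the one from Section~\ref{sec:Linear}. For $n \notin \primes \cup \{4\}$ I would first produce a grouping of the prime factors into two blocks of product $m \geq 2$ and $n' \geq 3$: if some prime factor of $n$ is at least $3$, place it by itself and put the rest in the other block; otherwise $n = 2^k$ with $k \geq 3$, and $m = 2$, $n' = 2^{k-1} \geq 4$ works. Theorem~\ref{thm:Main} then gives f.g.-universality of $\PAut[m; n']$ in $\RCA(mn') = \RCA(n)$, and f.g.-universality clearly propagates upward through the inclusion $\PAut[m; n'] \leq \PAut(n)$ inside $\RCA(n)$.

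For the final equivalence, the implication from $n \in \primes$ to virtual cyclicity (and hence amenability) is immediate from the first bullet, since $\Z \times S_n$ is virtually $\Z$. Conversely, if $n \notin \primes$ then $n = mn'$ with $m, n' \geq 2$ (taking $m = n' = 2$ if $n = 4$), so $\PAut(n)$ contains $\PAut[m; n']$, which is nonamenable by the second item of Theorem~\ref{thm:MainLinAme}; this rules out both amenability and virtual cyclicity of $\PAut(n)$. There is no serious obstacle here: all the difficult content is already packaged in the earlier theorems, and the present statement is essentially a translation from the indexed notation $\PAut[m; n]$ to the abstract form $\PAut(n)$, plus a trivial combinatorial case split on $n$.
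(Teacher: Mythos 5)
Your proposal is correct and follows essentially the route the paper intends: the paper never writes out a separate proof of Theorem~\ref{thm:MainSize}, treating it as the assembly of Lemma~\ref{lem:Prime}, Theorem~\ref{thm:Four}, Theorem~\ref{thm:Main} and Theorem~\ref{thm:NonLinearNonAmenable} via exactly the coarsening inclusion $\PAut[m;n'] \leq \PAut(n)$ you justify (the same containment the paper invokes, e.g., in the proof of Theorem~\ref{thm:Sofic}), together with the well-definedness from Lemma~\ref{lem:PAutWD}. Your case split producing blocks with $m \geq 2$, $n' \geq 3$ for $n \notin \primes \cup \{4\}$ and the upward propagation of f.g.-universality and nonamenability are all sound.
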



We also obtain a corollary about the group $\langle \RCA_1(n) \rangle$ of RCA generated by those with biradius one. This classifies the possible sets of f.g.\ subgroups for a cofinite set of alphabet sizes, and is proved in Theorem~\ref{thm:OptimalRadiusProof}.

\begin{theorem}
\label{thm:OptimalRadius}
$\langle \RCA_1(n) \rangle \leq \RCA(n)$ is f.g.-universal for large enough $n$.
\end{theorem}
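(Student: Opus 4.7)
The plan is to split on whether $n$ is composite or prime, because the composite (non-$4$) case is an immediate corollary of Theorem~\ref{thm:MainSize}.

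\textbf{Composite case.} Suppose $n$ is composite with $n \neq 4$, so in particular $n \geq 6$. Write $n = mk$ with $m \geq 2$ and $k \geq 3$. A partial shift with respect to such a product decomposition has biradius $1$ (both the shift and its inverse have radius $1$) and a symbol permutation has biradius $0$, so $\PAut[m;k] \leq \langle \RCA_1(n) \rangle$. By Theorem~\ref{thm:Main}, $\PAut[m;k]$ is f.g.-universal in $\RCA(mk) = \RCA(n)$, and therefore $\langle \RCA_1(n) \rangle$ is f.g.-universal in $\RCA(n)$ as well. This covers every composite $n \geq 6$.

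\textbf{Prime case.} For prime $p$ the previous argument fails at the $\PAut$-level, since Theorem~\ref{thm:MainSize} only gives $\PAut(p) \cong \Z \times S_p$, which is virtually cyclic. The plan is to embed $\PAut[m;k]$ for some composite $mk < p$ (for instance $mk = p-1$, which for odd $p \geq 7$ factors as $2 \cdot \frac{p-1}{2}$ with $\frac{p-1}{2} \geq 3$) into $\langle \RCA_1(p) \rangle$ by a simulation argument. Concretely: reserve at least one symbol of $\{0, \ldots, p-1\}$ as a spacer, encode configurations of $(B_1 \times B_2)^\Z$ (with $|B_1| = m$, $|B_2| = k$) into a subshift $Y \subseteq \{0, \ldots, p-1\}^\Z$ by interspersing data symbols with spacers at fixed density, and construct biradius-$1$ RCA on $\{0, \ldots, p-1\}$ whose compositions realize the partial shifts and symbol permutations generating $\PAut[m;k]$ on $Y$. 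Combining with Theorem~\ref{thm:Main} then yields f.g.-universality of $\langle \RCA_1(p) \rangle$.

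\textbf{Main obstacle.} The hard step is the prime case: the simulating biradius-$1$ RCA must be globally reversible on the full shift $\{0, \ldots, p-1\}^\Z$, not only on the encoded subshift $Y$. The naive ``act as the desired operation on $Y$, identity off $Y$'' extension generically fails to be a reversible CA at the interfaces between valid and invalid encodings, as the local rule cannot detect the global validity of the encoding from a radius-$1$ window. The intended workaround is to decompose each simulated operation into a composition of atomic biradius-$1$ reversible CA---for instance, carefully designed conditional swaps of adjacent data and spacer symbols whose applicability is unambiguously determined from a radius-$1$ window, so that reversibility holds on all of $\{0, \ldots, p-1\}^\Z$ and not merely on $Y$---and to verify that their composition enacts the correct generator on $Y$.
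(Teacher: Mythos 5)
Your composite case is exactly the paper's first step and is fine: $\PAut(n) \leq \langle \RCA_1(n) \rangle$, so Theorem~\ref{thm:Main} gives f.g.-universality for all composite $n \geq 6$. The problem is the prime case, which is where the actual content of the theorem lies, and there your argument stops at precisely the hard point. You correctly identify the obstacle (the simulating maps must be reversible CA on all of $\{0,\dots,p-1\}^\Z$, not just on the encoded subshift $Y$, and the group relations of $\PAut[m;k]$ must hold globally, not merely on $Y$ -- otherwise $g \mapsto \hat g$ is not even a well-defined homomorphism), but the ``intended workaround'' is only described, never carried out: no atomic biradius-$1$ maps are constructed, no verification that their compositions have no side effects on invalidly encoded configurations is given. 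This is not a routine detail. Indeed, if your spacer-based scheme worked for every prime $p \geq 7$ it would settle cases the paper explicitly leaves open (the paper states that the f.g.-universality status of $\langle \RCA_1(n) \rangle$ is unknown for $n \in \{3,4,5,7,11,13,\dots,31\}$), so the missing construction is genuinely the crux and cannot be waved through.

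The paper's proof (Theorem~\ref{thm:OptimalRadiusProof}) avoids this issue by a different encoding that makes \emph{every} configuration of the big full shift a valid encoded configuration, so there is no ``off-$Y$'' behaviour to control. Writing $n = k^2 + m$, decompose the alphabet as $A = B^2 \sqcup C$ with $|B| = k$: symbols of $C$ act as walls that are never modified, and maximal runs of $B^2$-symbols are read as two $B$-tracks glued into a conveyor belt at the walls. Any biradius-$1$ RCA on $B^\Z$ then induces a biradius-$1$ RCA on $A^\Z$, giving an embedding $\langle \RCA_1(k) \rangle \hookrightarrow \langle \RCA_1(n) \rangle$ that is automatically a homomorphism because the simulation is defined on all configurations. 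Since $\RCA(k)$ and $\RCA(n)$ have the same f.g. subgroups, this shows $U^2 + \N \subseteq U$ for the set $U$ of good alphabet sizes; with $6 \in U$ this yields all $n \geq 36$, which together with the composite case gives ``all large enough $n$'' (primes below $36$ are simply left open). To repair your write-up you should either reproduce a construction of this conveyor-belt type, or actually build and verify the atomic biradius-$1$ gates you allude to -- as it stands, the prime case is asserted rather than proved.
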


We also construct f.g.-universal groups with small generating sets. This is proved in Theorem~\ref{thm:MinimalGenerating}.

\begin{theorem}
For any $n \geq 2$, $\RCA(n)$ has an f.g.-universal subgroup generated by three involutions, as well as one generated by two elements one of which is an involution.
\end{theorem}

As mentioned above, one motivation for proving such results is that the groups $\Aut(A^\Z)$, and more generally $\Aut(X)$ for mixing SFTs $X$, are not finitely generated, and thus do not fit very neatly in the framework of geometric group theory. Thus, it is of interest to look for finitely generated subgroups which are representative of the entire group. On the other hand, in cases where we do not obtain universality, such study provides new examples of ``naturally occurring'' finitely generated RCA groups.

The set of finitely generated subgroups of $\Aut(A^\Z)$ is relatively big: It is closed under direct and free products and finite extensions \cite{Sa16a}, contains the graph groups (a.k.a. right-angled Artin groups) \cite{KiRo90}, and contains a group not satisfying the Tits alternative \cite{Sa19a} (we give another proof in Proposition~\ref{prop:NoTits}). In the planned extended version of \cite{BaKaSa16} we prove that there is an f.g.\ subgroup with undecidable torsion problem. Since the constructions of the present paper are constructive, Theorem~\ref{thm:Main} combined with \cite{KaOl08} provides a new proof of this.\footnote{Though the extended version of \cite{BaKaSa16} is not submitted or available online, it precedes the results of this paper and uses different methods -- there the work of producing a ``generating set'' is done in the group of Turing machines, while here it is done in the group of RCA.}

We state one corollary obtained in the symbolic dynamics setting (other embedding theorems are surveyed in \cite{Sa19a}). A \emph{sofic shift} is a subshift defined by a regular language of forbidden patterns; in particular all full shifts $A^\Z$ are trivially sofic. This is proved in Theorem~\ref{thm:Sofic}. 

\begin{theorem}
Let $X$ be an uncountable sofic shift. Then the group $\Aut(X)$ has a perfect subgroup generated by six involutions containing every f.g.\ subgroup of $\Aut(A^\Z)$ for any alphabet $A$.
\end{theorem}

The reason we include ``perfect'' in this statement is that in symbolic dynamics one is often interested specifically in ``inert'' elements of $\Aut(X)$ (this means that the natural action on the dimension group \cite{LiMa95}, i.e.\ the dimension representation, is trivial), and elements of its commutator subgroup. A perfect subgroup is contained in the commutator subgroup, and if the dimension representation of the automorphism group is solvable, a perfect subgroup can only contain inert elements.



We also summarize some properties of the abstract group obtained, for easier reference.


\begin{restatable}{theorem}{abstractstatement}
There exists a finitely generated residually finite perfect group $G$ such that, letting $\mathcal{G}$ be the class of finitely generated subgroups of $G$:
\begin{itemize}
\item $G$ has decidable word problem and undecidable torsion problem, and does not satisfy the Tits alternative, and
\item $\mathcal{G}$ is closed under finite extensions, direct products and free products, and contains all f.g.\ graph groups (that is, right-angled Artin groups).
\end{itemize}
\end{restatable}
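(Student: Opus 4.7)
The plan is to define $G$ as the perfect six-involution-generated f.g.-universal subgroup of $\RCA(A)$ whose existence is the main contribution of the paper (announced in the abstract and effectively assembled from Theorem~\ref{thm:Main} together with the perfection / involution-generation refinements discussed in the introduction). Once $G$ is in hand, each bullet is extracted by pulling a property either from a known fact about $\RCA$ or from a quoted result about the class of f.g.\ subgroups of $\RCA$, transported through the f.g.-universal embedding.

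First I would verify the ``intrinsic'' properties that do not go through universality. Residual finiteness of $G$ follows from residual finiteness of $\RCA(A^\Z)$: an RCA that is not the identity disagrees with the identity on some finite window, and the action on the (finite) orbit closure of any finite set of cylinders gives a finite quotient separating it from the identity; the subgroup $G$ inherits this. Decidability of the word problem for $G$ follows because a word in the six generators evaluates to a concrete RCA with an explicit local rule, and equality of two RCA as maps $A^\Z\to A^\Z$ is decidable (check agreement on all patterns of sufficient length). Perfection and six-involution-generation are built into the construction of $G$.

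Next, the closure and containment properties of $\mathcal{G}$ are inherited from $\RCA$ through f.g.-universality. By the result of \cite{Sa16a}, the class of f.g.\ subgroups of $\RCA(A)$ is closed under finite extensions, direct products and free products; by \cite{KiRo90} it contains all f.g.\ right-angled Artin groups. Since each such f.g.\ subgroup of $\RCA$ embeds into $G$ by f.g.-universality, the same closure and containment hold for $\mathcal{G}$. Failure of the Tits alternative comes from \cite{Sa17} (or Proposition~\ref{prop:NoTits}): there is an f.g.\ subgroup of some $\RCA(A)$ violating Tits; embed it into $G$, and $G$ itself then contains that subgroup and hence fails Tits. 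Undecidability of the torsion problem likewise follows by combining f.g.-universality with the f.g.\ subgroup of $\RCA$ with undecidable torsion problem from the journal version of \cite{BaKaSa16}, or alternatively, since the constructions here are effective, by composing Theorem~\ref{thm:Main} with the \cite{KaOl08} construction of an f.g.\ group with decidable word problem but undecidable torsion problem (the footnote in the introduction spells out exactly this route).

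The only real obstacle is bookkeeping: one must check that the single group $G$ produced by the paper genuinely realises all of these features simultaneously, i.e.\ that the embeddings promised by f.g.-universality land inside the specific perfect six-involution-generated subgroup, rather than only inside some larger $\PAut[m;n]$. This is exactly what the ``perfect subgroup generated by six involutions'' claim in the abstract provides, so the work here is to cite the precise construction (done elsewhere in the paper) and note that every property above is preserved under passage to that subgroup; the remaining verifications are then immediate applications of the cited results.
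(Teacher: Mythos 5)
Your proposal follows essentially the same route as the paper's proof: take the perfect six-involution-generated group $G \leq \PAut(64)$ supplied by Lemma~\ref{lem:Commutator}/Theorem~\ref{thm:Sofic}, obtain residual finiteness and decidability of the word problem from \cite{BoLiRu88}, transfer the failure of the Tits alternative, the graph groups, and the closure properties through f.g.-universality via \cite{Sa17}, \cite{KiRo90} and \cite{Sa16a}, and derive the undecidable torsion problem from the effectiveness of the simulations combined with the undecidability of RCA periodicity \cite{KaOl08}. The only quibble is your improvised sketch of residual finiteness (the finite invariant sets to use are the spatially periodic points of $A^\Z$, not ``orbit closures of finite sets of cylinders''), but this is a standard fact which the paper simply cites, so it does not affect the argument.
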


Any group with this list of properties is necessarily not a linear group over any field, contains every finite group, and every finitely generated abelian group and free group. 
We are not aware of many naturally occurring residually finite groups with such properties; for example the Tits alternative rules out linear groups, hyperbolic groups\footnote{It is not known whether all hyperbolic groups are residually finite \cite{Gr87}.} and fundamental groups of $3$-manifolds \cite{KoZa07,Fr15}, and having all finite groups as subgroups rules out automata groups.

In Section~\ref{sec:Questions}, we state some open questions about the groups $\RCA(A)$, and also 
 about the existence of (f.g.-)universal subgroups in other non-finitely generated groups of interest, namely other cellular automata groups, automata groups and the rational group, the group of Turing machines \cite{BaKaSa16}, topological full groups and (full) homeomorphism groups.

\subsection{Outline of the proof of f.g.-universality of $\PAut[m;n]$}

We outline the proof that $\PAut[m;n]$ contains an embedded copy of $\Aut(B^\Z)$ for any alphabet $B$, as long as $m$ and $n$ are large enough. Supposing $m \geq 2$ and $n \geq 3$, we use the first track, over alphabet $\{0,1,...,m-1\}$ as a ``control track''. Its value is never modified (except very temporarily), and everything interesting happens on the second, ``data track'', over alphabet $\{0,1,\ldots,n-1\}$, which we visualize as being ``below'' the control track.

First, we prove that if $w \in \{0,1,\ldots,m-1\}^*$ is an unbordered word (meaning it does not non-trivially overlap its shifted copies), then we can perform any permutation of length-$|w|$ words ``under'' the words $w$ and $ww$. Second, we prove that this allows us to simulate the action of any finitely generated group of reversible cellular automata under runs $www\ldots w$ appearing on the first track.

The first part is further proved in two stages. We recall the basics of how permutations of words can be built from permutations of subwords in Section~\ref{sec:Generators}, and from this (mostly known) theory it follows that it is sufficient to permute words of length two under $w$. The basic idea for this is that the commutator of two ``permutations conditioned on an event'' is the commutator of the permutations, conditioned on the \textit{intersection} of the events.

The basic idea for permuting words of length two under occurrences of $w$ is roughly same as in Section~\ref{sec:Generators}. This is straightforward for large $m$ and $n$, but the cases $(m,n) \in \{(2,3), (2,4), (3,3)\}$ are tricky, and we use an ad hoc argument. This is performed in Lemma~\ref{lem:FiveCase}.

To simulate finitely generated groups of reversible cellular automata under runs $www \ldots w$, we recall the trick of Kari \cite{Ka96} of representing reversible cellular automata by block permutations and partial shifts. This trick directly generalizes to finitely generated groups of reversible cellular automata. Under $w$, we can use this to simulate the action of the group.

There are some details that need to be dealt with: at boundaries of the $w$-runs we need to do something natural to obtain a homomorphism, and we use the standard trick of joining the words under $w$-runs into a conveyor belt. To deal with parity issues, we duplicate the action. This is all done in Lemma~\ref{lem:EvensUnderFIsEnough}.

\section{Definitions}
\label{sec:Conventions}

\subsection{Conventions and terminology}

Our conventions for the naturals are $0 \in \N$, $\N_+ = \N \setminus \{0\}$, and the set of primes is $\primes$. Intervals are discrete unless otherwise specified, i.e. $[a, b] = [a, b] \cap \Z$. Some basic knowledge of group theory \cite{Ro96}, symbolic dynamics \cite{LiMa95} and cellular automata is assumed, and we try to follow standard conventions.

An \emph{alphabet} is a finite set, and we mainly consider \emph{non-trivial} alphabets, namely ones with at least two elements. A \emph{subshift} is a shift-invariant closed subset of $A^\Z$ for an alphabet $A$, where the shift $\sigma : A^\Z \to A^\Z$ is $\sigma(x)_i = x_{i+1}$. Elements $x \in A^\Z$ are called \emph{configurations} or \emph{points}. 
If $X$ is a subshift, a \emph{basic cylinder} is a set of the form $[a]_i = \{x \in A^\Z \;|\; x_i = a\}$ where $a$ is in the alphabet of $X$. Basic cylinders form a subbase of the topology. 

The \emph{automorphisms} of a subshift $X$ are the self-homeomorphisms of $X$ that commute with the shift $\sigma$, and they form a group denoted by $\Aut(X)$. When $X = A^\Z$, we write $\Aut(X)$ also as $\RCA(A)$, and $\RCA(|A|)$ for the abstract group up to isomorphism.

\emph{Words} over an alphabet $A$ \cite{Lo02} form a monoid $A^*$ under concatenation, which is denoted $u \cdot v$ or $uv$. A word $u$ is \emph{$m$-unbordered} if $vu = uv' \implies |v| = 0 \vee |v| \geq m$, and \emph{unbordered} if it is $|u|$-unbordered. Configurations $x \in A^\Z$ are two-way infinite words. Often they have a periodic left and right tail, and a left tail with repeating word $u$ is written ${^\omega} u$ and a right tail as $u^\omega$. The position of the origin is either left implicit when specifying infinite words or marked with a decimal point. Finite words are $0$-indexed in formulas. In text we use the standard English ordinals, so the ``first symbol'' of a word $w$ is $w_0$ rather than $w_1$.

The clopen sets in $A^\Z$ are precisely the Boolean algebra generated by basic cylinders. We say a clopen set $F$ is \emph{$m$-unbordered} if $F \cap \sigma^i(F) = \emptyset$ for $i \in [1, m-1]$. Clearly $u$ is $m$-unbordered if and only if $[u]_i$ is an $m$-unbordered clopen set in the full shift.

For two words $u, v$ of the same length, write $D(u, v)$ for $\{i \in [0, |u|-1] \;|\; u_i \neq v_i\}$. The \emph{Hamming distance} of two words $u, v$ is $|D(u, v)|$. The Hamming distance is the path metric in the \emph{Hamming graph} (of length $n$ over alphabet $\Sigma$) whose vertices are $\Sigma^n$ and edges $(u, v)$ where $|D(u,v)| = 1$. If $a \in A$ and $u \in A^*$ write $|u|_a$ for the number of $a$-symbols in $u$.

The \emph{reversal} of a word is denoted by $w^T$ and defined by $w^T_i = w_{|w|-1-i}$. We also reverse other things such as subshifts, by reversing points in the sense $x^T_i = x_{-i}$, and cellular automata, by conjugating with the reversal map.

If $X$ and $Y$ are subshifts and $X \times Y$ their Cartesian product subshift (with the diagonal action), then $X$ and $Y$ are referred to as \emph{tracks}, and the $X$-track is also referred to as the \emph{top} track, and the $Y$-track the \emph{bottom} track. Write $\RAut(X \times Y)$ for the subgroup of $\Aut(X \times Y)$ containing those $f$ that never modify the $X$-track (i.e. $\forall x, y: \exists y': f(x, y) = (x, y')$).

An RCA $f : A^\Z \to A^\Z$ is \emph{of radius $r$} if $f(x)_0$ depends only on the word $x_{[-r,r]}$. A \emph{biradius} of a reversible cellular automaton $f$ is any number at least as large as the radii of $f$ and $f^{-1}$. The \emph{neighborhoods} are sets $N$ such that $f(x)_0$ depends only on $x|_N$, and \emph{bineighborhoods} are defined in the obvious way. 

If $N \subset \Z$ is a finite neighborhood and $A$ an alphabet, we let $\RCA_N(A)$ be the set of RCA with bineighborhood (the union of neighborhoods of the RCA and its inverse) contained in $N$. The case where $N$ is a contiguous interval is of particular interest. In the case $N = \{-r, \ldots, r\}$, that is, biradius $r$, we denote $\RCA_N(A)$ by $\RCA_r(A)$.



For two groups $G, H$, we write $H \leq G$ for the literal inclusion, and $H \hookrightarrow G$ when $H$ can be embedded into $G$.

The symmetric (resp. alternating) group on a set $A$ is $\Sym(A)$ (resp. $\Alt(A)$) and $S_n$ is the group $\Sym(A)$ for any $|A| = n$, up to isomorphism; similarly $A_n = \Alt(A)$ for $|A| = n$.

Composition of functions is from right to left and all groups (including permutation groups) act from the left unless otherwise specified. When permutations are written in cycle notation, we use whitespace or $;$ as the separator of the permutees. Usually we permute initial segments of $\N$ and elements of $\Sigma^n$ for a fixed finite alphabet $\Sigma$ and $n \in \N$.

The commutator conventions are
\[ [g, h] = g^{-1}h^{-1}gh, \;\;\; [g_1, g_2, ..., g_k] = [[g_1, g_2], g_3, ..., g_k]. \]
For $g,h$ elements of the same group, write $g^h = h^{-1} g h$. If $\phi : X \to Y$ is a bijection, we also use conjugation in the groupoid sense: if $h : Y \to Y$ is a bijection, write $h^\phi = \phi^{-1} \circ h \circ \phi : X \to X$. If $A, B$ are groups, then an $A$-by-$B$ group $G$ is one that admits an epimorphism to $B$ with kernel $A$. A virtually $H$ group (here also called a finite extension of $H$) is one that admits $H$ as a subgroup of finite index. If $A$, $B$ or $H$ are properties instead, the interpretation is existential quantification over groups with said property.

A \emph{subdirect product} of groups $G_1, \ldots, G_k$ is a subgroup of $G_1 \times \cdots \times G_k$ (one need not assume that the projection to each $G_i$ is surjective, but all our statements are true with this definition as well). A \emph{subquotient} of a group $G$ is a quotient of a subgroup.

The \emph{(transfinite) derived series} of a group $G$ is $G^{(0)} = G$, $G^{(\alpha+1)} = [G^{(\alpha)}, G^{(\alpha)}]$ for successor ordinals and $G^{(\alpha)} = \bigcap_{\beta < \alpha} G^{(\beta)}$ for limit ordinals. If this stabilizes at $G^{(k)} = 1$ for a finite ordinal $k$ (i.e. $G$ is solvable), then $k$ is called the the \emph{derived length} of $G$. The series always stabilizes at some ordinal $\alpha$, meaning $G^{(\alpha)} = G^{(\alpha+1)}$ for some minimal $\alpha$, and $G^{(\alpha)}$ is called the \emph{perfect core} of $G$. The \emph{(transfinite) lower central series} is $G_0 = G$, $G_{\alpha+1} = [G, G_{\alpha}]$ for successor ordinals and $G_\alpha = \bigcap_{\beta < \alpha} G_\beta$. This series also stabilizes at some ordinal $\alpha$, and we call $G_{\alpha}$ the \emph{hypocenter}.

A \emph{linear group} is a (not necessarily finitely generated) subgroup of a group of finite-dimensional matrices over a field, i.e. a subgroup of $\GL(n, F)$ for some field $F$ and some $n \in \N$. 

We make a few simple observations about decidability, and an informal understanding suffices: Let $\mathcal{P}$ be a family of propositions. We say $\mathcal{P}$ is \emph{semidecidable} if there exists an algorithm that, given a proposition $P$, eventually writes the answer ``yes'' if $P \in \mathcal{P}$, and eventually writes ``no'' or never writes anything if $P \notin \mathcal{P}$. We say $\mathcal{P}$ is \emph{decidable} if $\mathcal{P}$ and $\{\neg P \;|\; P \in \mathcal{P}\}$ are both semidecidable.

\subsection{$\PAut(A)$, $\PAut[B; C]$}
\label{sec:PAut}

If $B_1,B_2,...,B_k$ are finite alphabets, then $\PAut[B_1; B_2; \cdots; B_k]$ refers to the smallest subgroup of $\Aut((B_1 \times B_2 \times \cdots \times B_k)^\Z)$ containing the following maps:
The \emph{partial shifts} $\sigma_i$, $i \in [1, k]$ defined by
\[ \sigma_i(y^1, y^2, \cdots, y^k) = (y^1, y^2, ..., y^{i-1}, \sigma(y^i), y^{i+1}, ..., y^k), \]
where $\sigma : B_i^\Z \to B_i^\Z$ is the usual shift map,
and the \emph{symbol permutations} $\bar \pi$ defined by applying a permutation $\pi$ in every cell, or
\[ \bar \pi((y^1, y^2, \cdots, y^k))_j = \pi((y^1_j, y^2_j, \cdots, y^k_j)), \]
in symbols, where $\pi \in \Sym(B_1 \times B_2 \times \cdots \times B_k)$ is arbitrary. We usually identify $\bar \pi$ with $\pi$.

These maps are reversible, so $\PAut[B_1; B_2; \cdots; B_k] \leq \Aut((B_1 \times B_2 \times \cdots \times B_k)^\Z)$.

We write $\PAut(A)$ for the following subgroup of $\Aut(A^\Z)$: Let $|A| = n$ and let $n = p_1 \cdot p_2 \cdot ... \cdot p_k$ where $p_i$ are the prime factors of $n$ in any order. Pick a bijection $\psi : A \to B_1 \times B_2 \times ... \times B_k$ where $|B_i| = p_i$ for all $i$. Define $\PAut(A)$ as the group obtained by conjugating $\PAut[B_1; B_2; ...; B_k]$ through $\psi$. A priori, the resulting subgroup of $\Aut(A^\Z)$ could depend on the choice of $\psi$ and the $B_i$, but this is not the case.

\begin{lemma}
\label{lem:PAutWD}
The group $\PAut(A)$ is well-defined.
\end{lemma}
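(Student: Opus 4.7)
The plan is to verify three invariances: (a) that $\PAut(A)$ does not depend on the bijection $\pi$ once the sets $B_i$ are fixed, (b) that replacing each $B_i$ by some other set $B'_i$ of the same size changes nothing, and (c) that reordering the prime factors $p_1,\dots,p_k$ yields the same subgroup. By uniqueness of prime factorization these are the only choices hidden in the definition, and handling them separately lets me reduce to a single canonical model.

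For (a), fix the sets $B_i$ and two bijections $\pi,\pi' : A \to B_1\times\cdots\times B_k$. The composition $\rho := \pi' \circ \pi^{-1}$ is an element of $\Sym(B_1\times\cdots\times B_k)$, and its cellwise application $\bar\rho$ is by definition a symbol permutation, hence lies in $\PAut[B_1;\dots;B_k]$. Writing $\bar{\pi'} = \bar\rho \circ \bar\pi$, the two candidate subgroups of $\Aut(A^\Z)$ are $\{\bar\pi^{-1} g \bar\pi : g \in \PAut[B_1;\dots;B_k]\}$ and $\{\bar\pi^{-1}(\bar\rho^{-1} g \bar\rho)\bar\pi : g \in \PAut[B_1;\dots;B_k]\}$; since conjugation by $\bar\rho$ is an inner automorphism of $\PAut[B_1;\dots;B_k]$, the two sets coincide.

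For (b) and (c), I would exhibit an explicit isomorphism between the abstract groups $\PAut[B_1;\dots;B_k]$ and $\PAut[B'_{\tau(1)};\dots;B'_{\tau(k)}]$ realized by conjugation by the cellwise application $\bar\Phi$ of either a coordinate-wise bijection $\Phi = \phi_1\times\cdots\times\phi_k$ (for case (b), with $\phi_i : B_i \to B'_i$) or a coordinate-permuting bijection $\rho(b_1,\dots,b_k) = (b_{\tau(1)},\dots,b_{\tau(k)})$ (for case (c)). In either situation $\bar\Phi$ conjugates every partial shift to a partial shift (on the correspondingly relabelled or reindexed track) and every symbol permutation to a symbol permutation, hence intertwines the generating sets of the two $\PAut$-groups. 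Choosing the new bijection to be $\pi' := \Phi \circ \pi$, the conjugation through $\bar{\pi'} = \bar\Phi \circ \bar\pi$ yields the same subgroup of $\Aut(A^\Z)$ that $\bar\pi$ does, because $\bar\Phi^{-1}\,\PAut[\text{new}]\,\bar\Phi = \PAut[\text{old}]$. Step (a) then absorbs any remaining freedom in the choice of bijection on the new side.

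No step is a real obstacle; the only care required is to keep straight the two different roles of cellwise-lifted permutations: $\bar\rho$ as a genuine inner element of $\PAut[B_1;\dots;B_k]$ (handling the change of $\pi$), versus $\bar\Phi$ as an outer homeomorphism between the a priori distinct configuration spaces $(B_1\times\cdots\times B_k)^\Z$ and $(B'_1\times\cdots\times B'_k)^\Z$ that induces an abstract isomorphism of the two $\PAut$-groups (handling the change of the $B_i$ or of their order). Once these are disentangled, the lemma reduces to the observation that every symbol permutation of $B_1\times\cdots\times B_k$ is a generator of $\PAut[B_1;\dots;B_k]$ and that the type ``partial shift'' or ``symbol permutation'' is preserved by conjugation by a Cartesian-product-respecting bijection.
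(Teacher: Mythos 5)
Your proposal is correct and follows essentially the same route as the paper: reduce by relabelling and reordering the factors (which preserves the types ``partial shift'' and ``symbol permutation'' under conjugation by a product-respecting bijection), and then observe that two bijections onto the same product differ by a symbol permutation, which lies in $\PAut[B_1;\dots;B_k]$ itself, so the conjugated subgroups of $\Aut(A^\Z)$ coincide. The only difference is presentational: you spell out steps (b) and (c) that the paper dismisses as clear, and you phrase the key step as an inner automorphism upstairs rather than as an element lying in both subgroups downstairs, which is the same observation.
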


\begin{proof}
Let $\psi : A \to B_1 \times B_2 \times ... \times B_k$ and $\psi' : A \to B'_1 \times B'_2 \times ... \times B'_k$ be two bijections. By the fundamental theorem of arithmetic, and by reordering of the product (which clearly does not change the obtained subgroup of $\Aut(A^\Z)$), we may assume $|B_i| = |B'_i|$ for all $i$. Clearly the subgroup of $\Aut(A^\Z)$ obtained by using a particular bijection does not depend on the contents of the sets, but only their cardinalities, so we may hide the bijection coming from $|B_i| = |B'_i|$ and simply assume $B_i = B_i'$ for all $i$. Let $G$ and $G'$ be the two subgroups of $\Aut(A^\Z)$ generated by symbol permutations and partial shifts using the two bijections. Now, by definition, $G$ and $G'$ are conjugate subgroups of $\Aut(A^\Z)$, by the symbol permutation $\psi^{-1} \circ \psi'$ by a direct computation. This symbol permutation is in both of the groups $G$ and $G'$, so in fact the groups are equal.
\end{proof}

\section{Generators for some groups}
\label{sec:Generators}


\subsection{Controlled actions}

We begin by outlining an intuitive idea. Suppose we are dealing with a group action that is conditioned on some type of events, and write $\ctrl{g}{E}$ for the ``action of $g$ in case $E$ holds'' (this is a bijection as long as the conditioning events are not affected by the action). Then
\[ [\ctrl{g}{E}, \ctrl{h}{F}] = \ctrl{[g, h]}{E \cap F}, \]
since in the case of less than two events, the commutator cancels. When the acting group is perfect (e.g. an alternating group on at least $5$ objects), commutators $[g, h]$ are a generating set for the group, so if we can condition actions of $G$ on some set of events $\mathcal{E}$ (and their complements), we can condition them on any event in the ring of sets generated by $\mathcal{E}$, i.e. unions, intersections and relative complements of events. Typically we have a Boolean algebra of events, namely the algebra of clopen sets in some space.

The same idea can be used with $S_3$ and $S_4$, using the fact that they are not nilpotent, and their hypocenters are $A_3$ and $A_4$, respectively. Concretely, using for example the formula $[(0 \; 1 \; 2), (0 \; 1)] = (0 \; 1 \; 2)$, we can condition an even permutation on the intersection of two events, assuming one is a ``primitive event'' (so we can apply an arbitrary permutation conditioned on it), and the other is any ``composite event'' (so by induction we can apply an even permutation conditioned on it). See for example 
Lemma~\ref{lem:LowerCentralSeries} for a formal result to this effect.

 We do not give a general formalization of this idea, as often the events are entangled with whatever is being acted on, so one should rather consider this a proof technique. Informally, we refer to actions that ``depend on events'' as \emph{controlled or conditioned actions}, and use terms such as ``increase the control'' to refer to the tricks described above. The main application is to subshifts, whose Boolean algebra of clopen sets is generated by basic cylinders $[a]_i$.


\subsection{Alternating groups and $3$-hypergraphs}

The following lemma is from \cite{BoKaSa16}. A \emph{hypergraph} consists of a set of \emph{vertices} $V$ and \emph{hyperedges} $E \subset \mathcal{P}(V) \setminus \{\emptyset\}$. A hypergraph $\mathcal{G}$ is \emph{weakly connected} if the graph $\mathcal{G}'$, whose edges are those $2$-subsets of $V(\mathcal{G})$ that are contained in some hyperedge of $\mathcal{G}$, is connected.

\begin{lemma}
\label{lem:Hypergraph}
Let $\mathcal{G}$ be a hypergraph with all hyperedges of size $3$, and let $G$ be the group generated by three-cycles corresponding to the hyperedges of $\mathcal{G}$. If $\mathcal{G}$ is weakly connected, then $G = \Alt(V(\mathcal{G}))$.
\end{lemma}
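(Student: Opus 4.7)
The plan is to enumerate the hyperedges of $\mathcal{G}$ as $e_1, \ldots, e_m$ so that each prefix sub-hypergraph $\mathcal{G}_k = \{e_1, \ldots, e_k\}$ is weakly connected on its vertex set $V(\mathcal{G}_k)$. This is possible by a spanning traversal of the hyperedge-intersection graph (whose vertices are the hyperedges of $\mathcal{G}$ and whose edges are pairs of hyperedges sharing at least one vertex), which is connected precisely because $\mathcal{G}$ is weakly connected. I then induct on $k$ with the claim that the subgroup $G_k \leq G$ generated by the $3$-cycles of $e_1, \ldots, e_k$ equals $\Alt(V(\mathcal{G}_k))$; the base case $k = 1$ is a single hyperedge giving $A_3$ on its support, and $k = m$ is the desired conclusion.

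For the inductive step, let $s = |e_{k+1} \cap V(\mathcal{G}_k)| \in \{1, 2, 3\}$. If $s = 3$, the new generator already lies in $G_k = \Alt(V(\mathcal{G}_k))$ and nothing changes. If $s = 2$, write $e_{k+1} = \{a, b, x\}$ with $x$ the new vertex; conjugating $(a\;b\;x)$ by elements of $G_k$ (each of which fixes $x$) produces every $3$-cycle of the form $(g(a)\;g(b)\;x)$, and a short argument shows that these together with $G_k$ generate $\Alt(V(\mathcal{G}_k) \cup \{x\})$. If $s = 1$, write $e_{k+1} = \{a, x, y\}$ with both $x, y$ new; the same conjugation step yields all $(u\;x\;y)$ with $u \in V(\mathcal{G}_k)$, and then the product identity $(u\;x\;y)(u'\;y\;x) = (u\;x\;u')$, valid for distinct $u, u' \in V(\mathcal{G}_k)$, manufactures a $3$-cycle whose only new vertex is $x$. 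This reduces to the $s = 2$ step to adjoin $x$, after which any $(u\;x\;y)$ becomes an $s = 2$ generator adjoining $y$.

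The main obstacle is the $s = 1$ case, where two fresh vertices must be accommodated starting from a single new generator; the product identity above resolves it by producing a $3$-cycle that introduces only one of them at a time. A minor sanity check is required for the $s = 2$ argument when $|V(\mathcal{G}_k)| = 3$ (so $G_k = A_3$ is only transitive, not $2$-transitive), but a direct computation confirms that $(a\;b\;c)$ together with $(a\;b\;x)$ generate $A_4$ (for instance, their conjugates give all three of $(a\;b\;x), (b\;c\;x), (c\;a\;x)$, and these together with $(a\;b\;c)$ exhaust the generating set), so the induction has a solid starting point and proceeds smoothly.
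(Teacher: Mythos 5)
Your proof is correct. Note that the paper does not actually prove this lemma --- it imports it from the cited reference \cite{BoKaSa16} with the remark that it is probably a well-known fact about alternating groups --- so there is no in-paper argument to compare against; your write-up supplies a valid self-contained proof. The key steps all check out: weak connectedness does imply the hyperedge-intersection graph is connected, so the ordering with each prefix weakly connected exists; the $s=2$ step is the standard fact that $\Alt(V)$ together with one $3$-cycle $(a\;b\;x)$ through a new point generates $\Alt(V\cup\{x\})$ (via $2$-transitivity when $|V|\geq 4$, and your explicit $A_4$ computation covers $|V|=3$, which can occur at any stage, not only at the base case); and in the $s=1$ step the identity $(u\;x\;y)(u'\;y\;x)=(u\;x\;u')$ is correct under the paper's right-to-left composition convention (under the opposite convention it produces $(u\;u'\;y)$ instead, which serves the same purpose), so the reduction to two successive $s=2$ steps goes through. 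One could shave the argument slightly by noting that only the single conjugate with $u'=$ some $g(a)\neq a$ is needed rather than all $(u\;x\;y)$, but that is cosmetic.
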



\subsection{Universal families of reversible logical gates}

If you can permute two adjacent cells of words (evenly), you can permute words of any length (evenly), by the following Lemma~\ref{lem:UniversalGates} which strengthens a result of \cite{BoKaSa16}. Many results like this are known, see e.g. \cite{AaGrSc15,Bo19,Se16}, but usually (conjugation by) free reordering of wires, i.e.\ swapping the order of adjacent symbols, is allowed, so these results are not directly compatible with ours. In our application, wire reordering is not possible. (The swap of two wires is directly among the generators only if $|A| \equiv 0, 1 \bmod 4$.)

\begin{lemma}
\label{lem:UniversalGates}
Let $A$ be a finite alphabet with $|A| \geq 3$. If $n \geq 2$, then every even permutation of $A^n$ can be decomposed into even permutations of $A^2$ applied in adjacent cells. That is, the permutations
\[ w \mapsto w_0 w_1 \cdots w_{i-1} \cdot \pi(w_i w_{i+1}) \cdot w_{i+2} \cdots w_{n-1} \]
are a generating set of $\Alt(A^n)$ where $\pi$ ranges over $\Alt(A^2)$, and $i$ ranges over $0, 1,2,\ldots,n-2$.
For $|A| = 2$ the same is true when $n \geq 3$ and permutations are applied in length-$3$ subwords.
\end{lemma}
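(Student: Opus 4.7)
The plan is to show by induction on $n$ that the generated group $G$ equals $\Alt(A^n)$. The containment $G \subseteq \Alt(A^n)$ is easy, because each generator -- an even permutation $\pi$ of $A^2$ applied at some adjacent pair of positions -- acts on $A^n$ as the direct product of $|A|^{n-2}$ disjoint copies of $\pi$, with overall sign $(\mathrm{sgn}\,\pi)^{|A|^{n-2}} = +1$. The base case $n = 2$ is trivial, the generators being all of $\Alt(A^2)$.

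For the inductive step at $n \geq 3$, applying the inductive hypothesis in two ways -- once to the sub-family of generators at positions $(0,1), (1,2), \ldots, (n-3, n-2)$ and once to those at $(1,2), \ldots, (n-2, n-1)$ -- produces inside $G$ the subgroups $H_L = \{\pi \times \id_A : \pi \in \Alt(A^{n-1})\}$ and $H_R = \{\id_A \times \tau : \tau \in \Alt(A^{n-1})\}$, acting on $A^n$ under the respective decompositions $A^n = A^{n-1} \times A$ and $A^n = A \times A^{n-1}$. I plan to conclude via Jordan's classical theorem that a primitive permutation group on a finite set $X$ containing a $3$-cycle must contain $\Alt(X)$. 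Primitivity of $\langle H_L, H_R \rangle$ on $A^n$ follows from a standard block-system check: any nontrivial block system would, restricted to any $H_L$-orbit (a last-coordinate fiber $A^{n-1} \times \{a\}$), yield a block system for the natural action of $\Alt(A^{n-1})$ on $A^{n-1}$, which is $3$-transitive (since $|A|^{n-1} \geq 9 \geq 5$) and hence primitive; the symmetric argument using $H_R$ then eliminates both the singleton and full-fiber configurations.

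The main obstacle is to exhibit a single $3$-cycle in $\langle H_L, H_R \rangle$. A bare element $\pi \times \id_A \in H_L$ corresponding to a $3$-cycle $\pi \in \Alt(A^{n-1})$ is a product of $|A|$ disjoint $3$-cycles, not a single one; similarly for $H_R$. I plan to apply the controlled-action technique described in the preceding subsection: choosing a $3$-cycle $\pi$ whose support in $A^{n-1}$ consists of three words agreeing outside one coordinate, and similarly $\tau$, the commutator $[\pi \times \id_A, \id_A \times \tau]$ can be computed explicitly and shown to have strictly smaller support than $\pi \times \id_A$, effectively conditioning the action on matching values of the shared coordinates. The hypothesis $|A| \geq 3$ provides a third letter at each iteration so that the conditioning never trivializes, and iterating brings the support down to exactly three, i.e.\ to a $3$-cycle. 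Jordan's theorem then completes the proof.
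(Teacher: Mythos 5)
Your reduction to $\langle H_L,H_R\rangle$, the evenness check, and the Jordan-theorem framing are fine in outline (the primitivity sketch is incomplete as stated -- the symmetric fiber argument does not by itself exclude a block system in which every block meets each $H_L$-fiber and each $H_R$-fiber in at most one point; one needs an extra step, e.g.\ using $2$-transitivity of $\Alt(A^{n-1})$ to move a second point of a block into a common fiber -- but that is routine). The genuine gap is the step you yourself flag as the main obstacle: producing a single $3$-cycle. Take the cleanest instance of your plan, where $\pi$ and $\tau$ are $3$-cycles supported on triples varying in the \emph{same} interior coordinate $j$ (with $1\le j\le n-2$) and constant elsewhere. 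Then $\pi\times\id_A$ and $\id_A\times\tau$ each modify only coordinate $j$ and merely read the others, so every fiber over the remaining coordinates is invariant under both, and the commutator acts fiberwise as the commutator $[\rho_1,\rho_2]$ of two $3$-cycles of $A$, conditioned on all the control values matching. For $|A|=3$ this is the identity, since $\Alt(3)$ is abelian; for $|A|=4$ it lies in the commutator subgroup of $\Alt(4)$, the Klein four-group, so it is never a $3$-cycle. For the non-fiberwise choices (the two $3$-cycles acting on different coordinates) the commutator is not a conditioned $3$-cycle at all but a product of several conditioned $3$-cycles (support $6$ or more in the smallest examples), and you give no mechanism by which iteration is guaranteed to shrink the support to exactly $3$. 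So the claim that ``$|A|\ge 3$ provides a third letter so that the conditioning never trivializes'' is exactly where the argument breaks: with only three (or four) letters, commutators of \emph{even} permutations conditioned at a single coordinate do trivialize (or get stuck in $V_4$), and these small alphabets are precisely the cases for which the paper needs the lemma (it is invoked with $|C|=3,4$).

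This is also exactly the difficulty the paper's proof is organized around: it first reduces, via the weakly-connected-hypergraph lemma for alternating groups, to generating the special $3$-cycles that move one coordinate conditioned on all the others being $0$, and then grows the set of controlled coordinates one at a time by commutating the inductively obtained controlled $3$-cycle with a \emph{conditioned transposition} at the data coordinate, using $[(0\;1\;2),(0\;1)]=(0\;1\;2)$; the transposition is odd on the relevant fiber, so it is realized by an even permutation of $A^2$ whose side effects are confined to other coordinates by the sweeping map $\psi$ and the gate $\beta$. That use of an odd local permutation, made globally even, is the idea your proposal is missing. It is worth noting that for $|A|\ge 5$ your route does work, and in one step: choosing the two conditioned $3$-cycles at the same interior coordinate with value sets overlapping in exactly one letter makes $[\rho_1,\rho_2]$ a $3$-cycle of $A$, hence the commutator is a genuine $3$-cycle of $A^n$, and Jordan's theorem then gives a proof genuinely different from (and shorter than) the paper's for large alphabets; but as it stands the proposal does not prove the lemma for $|A|\in\{3,4\}$.
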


\begin{proof}
Suppose first $|A| \geq 3$, $n \geq 3$. It is enough to show that the $3$-cycles $(u; \; v; \; w)$ where for some $j$, $|\{u_j,v_j,w_j\}| = 3$, and $u_i = v_i = w_i$ for $i \neq j$, are generated. Namely, the result then follows by applying Lemma~\ref{lem:Hypergraph} to the hypergraph with vertices $A^n$ and edges $(u,v,w)$ that only differ in one position.

It is enough to show that the permutation that applies the cycle $(0 \; 1 \; 2)$ in coordinate $j$ if all other coordinates contain $0$, and is the identity otherwise, is generated. Namely, the other generators are conjugate to it or its inverse by even symbol permutations. Let us fix the coordinate $j$, and for a set of coordinates $N \not\ni j$ and permutation $\pi$, write $\ctrl{\pi}{N}$ for the permutation that applies $\pi$ in coordinate $j$ if all coordinates in $N$ contain $0$, and is the identity otherwise. We need to construct $\ctrl{(0 \; 1 \; 2)}{[0,j-1] \cup [j+1, n-1]}$.




By induction, we can assume that the map $\ctrl{(0 \; 1 \; 2)}{[j-\ell,...,j-1] \cup [j+1,...,j+r]}$, which applies $(0 \; 1 \; 2)$ at $j$ if and and only if the $\ell$ symbols to the left and $r$ symbols to the right are all $0$, is generated. By symmetry, it is enough to show that also
$\ctrl{(0 \; 1 \; 2)}{[j-\ell,...,j-1] \cup [j+1,\ldots,j+r+1]}$ is generated.


If $|A|$ is odd, define
\[ \pi = (01; \; 11)(02; \; 12)\cdots(0(|A|-1); \; 1(|A|-1)) \in \Alt(A^2), \]
and if $|A|$ is even, define 
\[ \pi = (01; \; 11)(02; \; 12)\cdots(0(|A|-1); \; 1(|A|-1))(20; \; 21) \in \Alt(A^2). \]
In each case, $\pi$ has the property that, when applied to a word $ab$, if $a = 0$ then the value of $a$ changes if and only if $b \neq 0$, and it always changes to $1$ in this case.

Let $\psi$ be the map that applies $\pi$ successively in the subwords
\[ [j+r,j+r+1], [j+r-1, j+r], \ldots, [j+1, j+2]. \]
Observe that if $w_{j-\ell,\ldots,j-1} w_{j+1, \ldots,j+r} = 0^{\ell + r}$, then $\psi(w)_{j+1} \in \{0,1\}$ and we have $\psi(w)_{j+1} = 1 \iff w_{j+r+1} \neq 0$.

Let $\beta$ apply the permutation $(00; \; 10)(02; \; 12)$ at $[j,j+1]$. Note that $\beta^\psi$ does not modify any coordinate other than $j$, i.e.\ the effect of $\psi$ is cancelled after applying $\beta$. We claim that we have
\[ [\ctrl{(0 \; 1 \; 2)}{[j-\ell,j-1] \cup [j+1,...,j+r]}, \beta^\psi] = \ctrl{(0 \; 1 \; 2)}{[j-\ell,j-1] \cup [j+1,j+r+1]}. \]
To see this, observe that if the coordinates in $[j-\ell,j-1] \cup [j+1,j+r+1]$ all contain $0$, then the commutator $[(0 \; 1 \; 2), (0 \; 1)] = (0 \; 1 \; 2)$ is applied at $j$, and no other coordinate is modified. If some coordinate in $[j-\ell,j-1] \cup [j+1,...,j+r]$ is nonzero, this fact is not changed by $\beta^\psi$, so $\ctrl{(0 \; 1 \; 2)}{[j-\ell,j-1] \cup [j+1,...,j+r]}$ has no effect, and the effect of $\beta^\psi$ cancels. If all coordinates of $[j-\ell,j-1] \cup [j+1,...,j+r]$ contain $0$ but the value at $j+r+1$ is not $0$, then $\beta^\psi$ has no effect, since just before $\beta$ is applied, $\psi$ has propagated a $1$-symbol to the coordinate $j+r+1$.

Suppose then $|A| = 2, n \geq 3$. Then it is essentially classical that the set of all even permutations of $A^4$ generates all even permutations of $A^n$ for any $n$ (swaps, flips and the Toffoli gate $(a,b,c) \mapsto (a,b,c + ab)$ are even as permutations of $A^4$), and a quick search in GAP \cite{GAP} shows that the set of all even permutations of $A^4$ is generated by the even permutations of $A^3$.
\end{proof}

The lemma does not hold for $|A| = 2$ and permutations applied in adjacent cells: all permutations of $A^2$ are affine for the natural linear structure of $A^2 \cong \Z_2^2$, so they will also give only affine maps with respect to the natural linear structure of $A^n$. In fact, they do not generate all even permutations of $A^3$.

As hinted by the title of the section, a typical and useful way to think of permutations applied to subwords is as ``reversible logical gates''. One can draw reversible gates in picture form by having a ``wire'' for each $i \in \{0, \ldots, n-1\}$, and the $i$th wire carries a signal corresponding to the symbol $w_i \in A$. A permutation $\pi$ applied to consecutive wires $\{i, i+1, \ldots, i+j-1\}$ is visualized as a box labeled with the corresponding permutation of $\Sym(A^j)$, and is thought of as a logical gate acting on the signals. The special gate which performs the operation $ab \leftrightarrow ba$ on $a,b \in A$ can be represented as a reordering of wires (the braiding of the wires carries no meaning).

We say a family of gates is \emph{universal} if it generates all the even gates on $A^n$ for large enough $n$. Combining the previous lemma with any standard set of generators for $\Alt(A^2)$, we obtain a set of two gates that generates all other gates. It is well-known that as $n$ tends to infinity, the fraction of pairs $(g, h) \in \Alt(k)$ with $\langle g, h \rangle = \Alt(k)$ tends to $1$ \cite{Di69}, so almost any two even random permutations of $A^2$ form a universal family of reversible gates. We conjecture that a single gate suffices for $n$ large enough.

\section{Structure and universality of $\PAut[...]$-groups}

We prove Theorem~\ref{thm:Main} in Section~\ref{sec:Universal}. Theorem~\ref{thm:MainLinAme} is a combination of Lemma~\ref{lem:Prime}, Theorem~\ref{thm:Four} and Theorem~\ref{thm:NonLinearNonAmenable}, which are proved in sections~\ref{sec:Prime},~\ref{sec:Linear} and~\ref{sec:LinAme}, respectively. In addition to the results mentioned, we discuss some basic structural properties of subgroups which arise in the course of the proof.

\subsection{Universal groups}
\label{sec:Universal}

In this section, we perform the main engineering task of building copies of every finitely generated group of RCA in the $\PAut[B; C]$ groups. 

\begin{definition}
Suppose $F \subset B^\Z$ is an $n$-unbordered clopen set and $\pi : C^n \to C^n$ is a permutation. Then define $\ctrl{\pi}{F} \in \Aut((B \times C)^\Z)$ by
\[ \ctrl{\pi}{F}(x, y)_j = \left\{\begin{array}{ll}
(x_j, \pi(y_{[j-i,j-i+n-1]})_i) & \mbox{if } i \in [0,n-1], \sigma^{j-i}(x) \in F \\
(x_j, y_j) & \mbox{if } \forall i \in [0,n-1]: \sigma^{j-i}(x) \notin F. \\
\end{array}\right. \]
\end{definition}


The map $\ctrl{\pi}{F}$ performs the permutation $\pi$ on the bottom track under every occurrence of $F$ on the top track. One should think of this as a conditional application of $\pi$ on the bottom track, where the condition is that the top track contains a point that is in $F$. The definition makes sense, since due to the fact $F$ cannot overlap a translate of itself by less than $n$ steps (by $n$-unborderedness), permutations can unambiguously modify a contiguous interval of $n$ cells to the right of the place where $F$ occurs. 

\begin{example}
\label{ex:f}
Let $f = \ctrl{(\gr{00}; \gr{10}; \gr{01})}{[\bl{01}]_0}$. To apply $f$, locate occurrences of $\bl{01}$ on the top track, and permute the words under the occurrences according to the permutation $(\gr{00}; \gr{10}; \gr{01})$:
\begin{align*}
f & \left( \begin{matrix}
...0100111001001001001000110010010...\\
...0101110011010011010101001001010...
\end{matrix} \right) = \\
f & \left( \begin{matrix}
...\bl{01}0\bl{01}110\bl{01}0\bl{01}0\bl{01}0\bl{01}00\bl{01}10\bl{01}0\bl{01}0...\\
...\gr{01}0\rr{11}100\rr{11}0\gr{10}0\rr{11}0\gr{10}10\gr{10}01\gr{00}1\gr{01}0...
\end{matrix} \right) = \\
& \hspace{9pt} \begin{matrix}
...0100111001001001001000110010010...\\
...0001110011001011001100101101000...
\end{matrix}
\end{align*}
where we write occurrences of the controlling clopen set $[\bl{01}]_0$ in blue, words modified by the permutation in green, and the fixed points of the permutation (to which it is nevertheless applied) in red.

One can also extract an explicit local rule:
\[ \begin{tikzpicture}[scale = 0.5] \draw (0,0) grid (3,1); \draw (1,-1) rectangle (2,0); \node at (1.5,0.5) {$\begin{smallmatrix}0\\0\end{smallmatrix}$}; \node at (2.5,0.5) {$\begin{smallmatrix}1\\0\end{smallmatrix}$}; \node at (1.5,-0.5) {$\begin{smallmatrix}0\\1\end{smallmatrix}$}; \end{tikzpicture}\;\;\;\;\begin{tikzpicture}[scale = 0.5] \draw (0,0) grid (3,1); \draw (1,-1) rectangle (2,0); \node at (1.5,0.5) {$\begin{smallmatrix}0\\0\end{smallmatrix}$}; \node at (2.5,0.5) {$\begin{smallmatrix}1\\1\end{smallmatrix}$}; \node at (1.5,-0.5) {$\begin{smallmatrix}0\\0\end{smallmatrix}$}; \end{tikzpicture}\;\;\;\;\begin{tikzpicture}[scale = 0.5] \draw (0,0) grid (3,1); \draw (1,-1) rectangle (2,0); \node at (1.5,0.5) {$\begin{smallmatrix}0\\1\end{smallmatrix}$}; \node at (2.5,0.5) {$\begin{smallmatrix}1\\0\end{smallmatrix}$}; \node at (1.5,-0.5) {$\begin{smallmatrix}0\\0\end{smallmatrix}$}; \end{tikzpicture}\;\;\;\;\begin{tikzpicture}[scale = 0.5] \draw (0,0) grid (3,1); \draw (1,-1) rectangle (2,0); \node at (0.5,0.5) {$\begin{smallmatrix}0\\0\end{smallmatrix}$}; \node at (1.5,0.5) {$\begin{smallmatrix}1\\0\end{smallmatrix}$}; \node at (1.5,-0.5) {$\begin{smallmatrix}1\\0\end{smallmatrix}$}; \end{tikzpicture}\;\;\;\;\begin{tikzpicture}[scale = 0.5] \draw (0,0) grid (3,1); \draw (1,-1) rectangle (2,0); \node at (0.5,0.5) {$\begin{smallmatrix}0\\0\end{smallmatrix}$}; \node at (1.5,0.5) {$\begin{smallmatrix}1\\1\end{smallmatrix}$}; \node at (1.5,-0.5) {$\begin{smallmatrix}1\\0\end{smallmatrix}$}; \end{tikzpicture}\;\;\;\;\begin{tikzpicture}[scale = 0.5] \draw (0,0) grid (3,1); \draw (1,-1) rectangle (2,0); \node at (0.5,0.5) {$\begin{smallmatrix}0\\1\end{smallmatrix}$}; \node at (1.5,0.5) {$\begin{smallmatrix}1\\0\end{smallmatrix}$}; \node at (1.5,-0.5) {$\begin{smallmatrix}1\\1\end{smallmatrix}$}; \end{tikzpicture} \] 

In all nonspecified cases we output the contents of the central cell.
\qee
\end{example}


\begin{definition}
\label{def:Controlled}
Let $X$ be a subshift and $G$ a group acting on a set $A$. For a clopen set $C \subset X$ and $g \in G$, define $\ctrl{g}{C} : X \times A \to X \times A$ by
\[ \ctrl{g}{C}(x,a) = \left\{\begin{array}{ll}
(x,ga) & \mbox{if } x \in C \\
(x,a) & \mbox{otherwise.}\end{array}\right. \]
Define the shift by $\sigma(x,a) = (\sigma(x),a)$ where $\sigma$ denotes both the new and the usual shift map. We denote the group generated by these maps by $\ctrl{G}{X}$. We denote by $P(X, G)$ the subgroup generated by the shift on $X$ and maps $\ctrl{g}{C}$ where $g \in G$ and $C$ is a basic cylinder.
\end{definition}

Note that $P(X, G)$ is finitely generated, since $\ctrl{g}{[a]_i} = (\ctrl{g}{[a]_0})^{\sigma^i}$.
The notation $P(X, G)$ is by analog with the `$P$' in $\PAut$, as these groups can be simulated rather transparently with elements of $\PAut$. See Section~\ref{sec:ModOneTrack} 
for some basic observations about these groups.

\begin{lemma}
\label{lem:LowerCentralSeries}
Let $X \subset \Sigma^\Z$ be a  subshift and $G$ a group acting on a set $A$. Then for all clopen $C$, $P(X, G)$ contains $\ctrl{g}{C}$ for all $g$ in the hypocenter of $G$. 
\end{lemma}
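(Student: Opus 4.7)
The plan is to show by induction that increasing the ``intersection depth'' of the control set can be absorbed at the cost of descending one step in the lower central series --- the mechanism informally discussed in the ``Controlled actions'' subsection above. By direct calculation, in $P(X,G)$ one has the two key identities
\begin{align*}
\ctrl{g}{C_1 \sqcup C_2} &= \ctrl{g}{C_1}\,\ctrl{g}{C_2} \quad (C_1 \cap C_2 = \emptyset), \\
[\ctrl{g_1}{C_1},\ctrl{g_2}{C_2}] &= \ctrl{[g_1,g_2]}{C_1 \cap C_2},
\end{align*}
and $g \mapsto \ctrl{g}{C}$ is a homomorphism for every fixed clopen $C$. The commutator identity is the workhorse: it follows by case analysis on whether the configuration lies in $C_1$ and/or $C_2$, since on the ``off-diagonal'' regions one of the two factors becomes the identity and the commutator collapses.

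With these tools, I would prove by induction on $n \geq 1$ the statement: for every length-$n$ word cylinder $[w]_i$ and every $g \in G_{n-1}$, one has $\ctrl{g}{[w]_i} \in P(X, G)$, where $G_0 = G$ and $G_{k+1} = [G, G_k]$. The base case $n = 1$ is the definition of the generators. For the step, decompose $[w]_i = [w']_i \cap [w_{n-1}]_{i+n-1}$ with $|w'| = n-1$; by the induction hypothesis $\ctrl{g_1}{[w']_i} \in P(X, G)$ for every $g_1 \in G_{n-2}$, and by the base case $\ctrl{g_2}{[w_{n-1}]_{i+n-1}} \in P(X, G)$ for every $g_2 \in G$, so the commutator identity yields $\ctrl{[g_1, g_2]}{[w]_i} \in P(X, G)$. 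Taking products, the set of $g$ realizing $\ctrl{g}{[w]_i} \in P(X, G)$ contains the subgroup generated by these commutators, which is exactly $[G, G_{n-2}] = G_{n-1}$.

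To finish, any clopen $C \subseteq X$ is a finite disjoint union of word cylinders $[w_1]_{i_1}, \ldots, [w_k]_{i_k}$ of some maximal length $N$. If $g$ lies in the hypocenter of $G$ then in particular $g \in G_{N-1}$ (since the hypocenter sits inside every term of the lower central series), so the inductive claim provides $\ctrl{g}{[w_j]_{i_j}} \in P(X, G)$ for each $j$, and the disjoint-union identity combines them into $\ctrl{g}{C}$. The only real obstacle is verifying the commutator identity cleanly and keeping the lower-central-series bookkeeping straight; the conceptual point is that the hypocenter assumption is precisely what uniformly covers the arbitrary but finite intersection depths that arise as $C$ ranges over clopen subsets of $X$.
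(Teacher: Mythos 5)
Your proof is correct and follows essentially the same route as the paper: induction on cylinder length, extending the controlling word one letter at a time via the identity $[\ctrl{g_1}{C_1},\ctrl{g_2}{C_2}]=\ctrl{[g_1,g_2]}{C_1\cap C_2}$, and then passing to arbitrary clopen sets by a disjoint cylinder decomposition. The only (harmless) difference is bookkeeping: you prove the stronger per-length statement that all of $G_{n-1}$ is controllable on length-$n$ cylinders and invoke only that the hypocenter lies in every finite term of the lower central series, whereas the paper keeps the induction inside the hypocenter $H$ itself by using $H=[G,H]$ to write each hypocenter element as a product of commutators $[h_i,g_i]$ with $h_i\in H$.
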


\begin{proof}
It is enough to prove this for cylinders, i.e. $C = [w]_m$ for a word $w$ and $m \in \Z$. This is true by assumption if $C$ is a basis set. 
Let then $C = [wa]_m$ where $a \in \Sigma$. If $h$ is in the hypocenter, then $h = [h_1, g_1][h_2, g_2]...[h_j, g_j]$ for some $h_i$ in the hypocenter and $g_i$ in $G$. It is thus enough to show that $\ctrl{[h_i, g_i]}{[wa]_m} \in P(X, G)$. It is easy to verify that
\[ [ \ctrl{h_i}{[w]_m}, \ctrl{g_i}{[a]_{m+|w|}}] =  \ctrl{[h_i, g_i]}{[w]_m \cap [a]_{m+|w|}} = \ctrl{[h_i, g_i]}{[wa]_m}. \]
\end{proof}

The following lemma separates the $\PAut[2; 2]$ case from others, by finding a large locally finite subgroup in $\PAut[B \times C]$. (The conclusion is true also for $|C| = 2$, but is trivial in that case.) 

\begin{lemma}
\label{lem:ControlGrowing}
Let $|B|, |C| \geq 2$. Then for every even permutation $\phi$ of $C$ and any clopen $F \subset B^\Z$, $\ctrl{\phi}{F}$ is in $\PAut[B; C]$. 
\end{lemma}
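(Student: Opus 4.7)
My plan is to build $\ctrl{\phi}{F}$ inductively -- first for single-cell cylinders $[b]_i$, then for multi-cell cylinders $[w]_i$, then for arbitrary clopens -- using the controlled-action philosophy introduced earlier. The base ingredient is that for any $b \in B$ and any $\phi \in \Sym(C)$ (not merely $\Alt(C)$), the symbol permutation of $B \times C$ sending $(b, c) \mapsto (b, \phi(c))$ and fixing $(b', c)$ for $b' \neq b$ is exactly $\ctrl{\phi}{[b]_0}$, and lies in $\PAut[B;C]$ by definition. Conjugating by the partial shift yields $\sigma_1^{-i}\,\ctrl{\phi}{[b]_0}\,\sigma_1^i = \ctrl{\phi}{[b]_i}$ for every $i \in \Z$. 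The access to \emph{arbitrary} $\phi \in \Sym(C)$ on primitive events, rather than merely $\Alt(C)$, is what will make the inductive step work at small $|C|$.

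Next I would apply the commutator trick. Since $B$-coordinates are never touched during any of these maps, the conditions $\sigma^j x \in F$ and $\sigma^j x \in G$ are stable through a commutator, and the $C$-cell actions are per-cell independent; a direct check in the four cases ($\sigma^j x$ lies in $F \cap G$, $F \setminus G$, $G \setminus F$, or neither) gives $[\ctrl{\alpha}{F}, \ctrl{\beta}{G}] = \ctrl{[\alpha, \beta]}{F \cap G}$ for all $\alpha, \beta \in \Sym(C)$ and clopens $F, G \subset B^\Z$. Using $[wa]_i = [w]_i \cap [a]_{i+|w|}$ and inducting on $|w|$, I combine the primitive $\ctrl{\alpha}{[a]_{i+|w|}}$ ($\alpha \in \Sym(C)$) with the inductive $\ctrl{\beta}{[w]_i}$ ($\beta \in \Alt(C)$) to get $\ctrl{[\alpha, \beta]}{[wa]_i}$; taking products (using that $\gamma \mapsto \ctrl{\gamma}{[wa]_i}$ is a homomorphism) then yields $\ctrl{\phi}{[wa]_i}$ for every $\phi$ in the subgroup $[\Sym(C), \Alt(C)] \leq \Alt(C)$.

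The only group-theoretic input needed is $[\Sym(C), \Alt(C)] = \Alt(C)$ for every $|C| \geq 2$: it is trivial for $|C| = 2$, and for $|C| \geq 3$ it follows from the computation $[(1\;2), (1\;2\;3)] = (1\;3\;2)$, since $3$-cycles generate $\Alt(C)$ when $|C| \in \{3, 4\}$, and since $\Alt(C)$ is simple nonabelian for $|C| \geq 5$ (so the subgroup generated, being normal in $\Sym(C)$ and containing a nontrivial element of $\Alt(C)$, must be all of $\Alt(C)$). To upgrade from cylinders to arbitrary clopens, I write $F \subset B^\Z$ as a finite disjoint union of cylinders and observe that $\ctrl{\phi}{F_1 \sqcup F_2} = \ctrl{\phi}{F_1}\,\ctrl{\phi}{F_2}$ whenever $F_1 \cap F_2 = \emptyset$, since at any fixed cell at most one summand is active. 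I expect the only real obstacle to be the bookkeeping for the commutator identity and for the small-alphabet case analysis of $[\Sym(C), \Alt(C)]$; no new ideas beyond the controlled-action framework are needed.
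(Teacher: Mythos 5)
Your proof is correct and follows essentially the same route as the paper: your cylinder-length induction via the identity $[\ctrl{\alpha}{F},\ctrl{\beta}{G}]=\ctrl{[\alpha,\beta]}{F\cap G}$, with full $\Sym(C)$ available on single-cell cylinders and $[\Sym(C),\Alt(C)]=\Alt(C)$ closing the induction, is exactly the content of the paper's Lemma~\ref{lem:LowerCentralSeries} specialized to $P(B^\Z,\Sym(C))$ (whose hypocenter is $\Alt(C)$), which is how the paper proves this lemma. The only difference is packaging: the paper cites the hypocenter lemma, while you inline its proof together with the disjoint-union reduction to cylinders.
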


\begin{proof}
For every $n$, the hypocenter of $S_n$ is $A_n$. It is easy to see that the partial shift on either track, together with symbol permutations that only modify the bottom track, implement the group $P(B^\Z, G)$ in a natural way where $G = S_{|C|}$, and the claim follows from the previous lemma. 
\end{proof}


\begin{lemma}
\label{lem:FiveCase}
Let $|B| \geq 2, |C| \geq 3$. Then for any $n$-unbordered clopen set $F \subset B^\Z$, $\ctrl{\pi}{F} \in \PAut[B; C]$ for every $\pi \in \Alt(C^n)$.
\end{lemma}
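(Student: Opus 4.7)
Plan: If $n=1$ the claim is exactly Lemma~\ref{lem:ControlGrowing}, so assume $n\geq 2$. I would reduce twice---first from $\Alt(C^n)$ to $\Alt(C^2)$, then from there to specific conditional $3$-cycles---and finally implement these by an encoding trick built from partial shifts and symbol permutations.

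For the first reduction, $|C|\geq 3$ lets me apply Lemma~\ref{lem:UniversalGates}: every $\pi\in\Alt(C^n)$ is a product of even $2$-cell gates $\tau\in\Alt(C^2)$ applied at adjacent positions $[i,i+1]$ of the $n$-window. The $n$-unborderedness of $F$ keeps the $2$-cell targets at distinct $F$-occurrences pairwise disjoint (they are at distance $\geq n\geq 2$), so $\ctrl{\pi}{F}$ factors as an ordered product of $\ctrl{\tau_\ell}{\sigma^{-i_\ell}(F)}$'s; since shifts preserve $n$-unborderedness, it suffices to build $\ctrl{\tau}{F}$ for each $\tau\in\Alt(C^2)$ acting on $(y_j,y_{j+1})$ whenever $\sigma^j(x)\in F$. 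For the second reduction, Lemma~\ref{lem:Hypergraph} applied to the hypergraph on $C^2$ with ``horizontal'' hyperedges $\{(c_1,d),(c_2,d),(c_3,d)\}$ and ``vertical'' hyperedges $\{(a,e_1),(a,e_2),(a,e_3)\}$---whose induced graph is the connected rook graph on $C\times C$---shows that $\Alt(C^2)$ is generated by the corresponding $3$-cycles. A horizontal $3$-cycle controlled by $F$ is precisely the action ``apply $\phi=(c_1\,c_2\,c_3)\in\Alt(C)$ to $y_j$ iff $\sigma^j(x)\in F$ and $y_{j+1}=d$'' (vertical is symmetric, after swapping the roles of $y_j$ and $y_{j+1}$), so the problem reduces to realizing such conditional actions.

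To realize them, pick $b_0\neq b_1$ in $B$ (using $|B|\geq 2$) and let $\psi\in\Sym(B\times C)$ be the involution swapping $(b_0,d)\leftrightarrow(b_1,d)$ and fixing everything else. A direct computation gives $\beta:=\sigma_1\bar\psi\sigma_1^{-1}\in\PAut[B;C]$ as the map that preserves the $C$-track and sends $x_j\mapsto\psi_{y_{j+1}}(x_j)$, so the induced $b_0\leftrightarrow b_1$ swap at position $j$ is a faithful marker of ``$y_{j+1}=d$''. Using the Boolean closure afforded by the Lemma~\ref{lem:LowerCentralSeries}-style commutator trick, it is enough to handle $F$ a single cylinder $[w]_0$ with $w_0=b_0$; setting $F'$ to be the same cylinder but with $w_0$ replaced by $b_1$, the element $\beta^{-1}\ctrl{\phi}{F'}\beta$ applies $\phi$ to $y_j$ exactly when $\sigma^j(x)\in F$ and $y_{j+1}=d$, as required. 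The main obstacle is verifying that $\beta^{-1}$ correctly inverts $\beta$ through the conjugation: if $\ctrl{\phi}{F'}$ disturbed $y_{j+1}$, the marker would be corrupted. This is precisely where $n$-unborderedness is essential---an $F'$-occurrence in $\beta(x,y)$ refines an $F$-occurrence in $x$, so $F'$-hits are at mutual distance $\geq n\geq 2$, an $F'$-hit at $j$ forbids one at $j+1$, and $y_{j+1}$ is preserved. The same argument handles the $|C|=3$ subcase uniformly (where $\phi$ necessarily moves $d$), and the reduction to cylinders with $w_0=b_0$ is carried out by treating cylinders of the other first-symbol values in parallel via the same Boolean-combination trick.
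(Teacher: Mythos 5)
Your two reductions (via Lemma~\ref{lem:UniversalGates} and Lemma~\ref{lem:Hypergraph}) to conditional $3$-cycles of the form ``apply $\phi\in\Alt(C)$ at $y_j$ iff $\sigma^j(x)\in F$ and $y_{j+1}=d$'' follow the paper's route and are fine. The gap is in the realization step: conjugating by the marker map $\beta$ does not intersect conditions, it only transports the condition of $\ctrl{\phi}{F'}$ along $\beta$, and the transported condition ``$\sigma^j(\beta(x,y))\in F'$ on the first track'' is not equivalent to ``$\sigma^j(x)\in F$ and $y_{j+1}=d$''. Concretely take $B=\{b_0,b_1\}=\{0,1\}$, $w=011$, $w'=111$, $d=0$: (i) if $x_j=b_1$, $x_{[j+1,j+2]}=11$ and $y_{j+1}\neq d$, then $\beta$ leaves $x_j$ untouched, $\sigma^j(\beta(x,y))\in F'$, and $\phi$ is applied at $y_j$ even though $\sigma^j(x)\notin F$ (false positive); (ii) if $\sigma^j(x)\in F$, $y_{j+1}=d$ but also $y_{j+2}=d$, then $\beta$ also flips the interior letter $x_{j+1}$ (when $|B|=2$ every letter of $w$ lies in $\{b_0,b_1\}$, so this cannot be avoided by choosing $w$), the $F'$-occurrence is destroyed and the required application of $\phi$ is missed (false negative). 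So your key assertion that an $F'$-occurrence in $\beta(x,y)$ ``refines'' an $F$-occurrence in $x$, and hence the claimed exactness of $\beta^{-1}\ctrl{\phi}{F'}\beta$, are false.

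Even apart from this, the side effect you flag is mislocated: the dangerous cell is not $y_{j+1}$ for the hit at $j$ but $y_j$ regarded as the marker cell for position $j-1$. When $\phi$ moves $d$ (forced for $|C|=3$, and needed for your vertical cycles), applying $\phi$ at $y_j$ changes whether $y_j=d$, so the outer $\beta$ fails to undo the swap it performed at $x_{j-1}$; the composite then alters the first track and is not of the form $\ctrl{\cdot}{\cdot}$ at all, and $n$-unborderedness does not repair this. This is precisely the part of the lemma that consumes most of the paper's proof: there the intersection of conditions is produced by genuine commutators (using $[(0\;1),(0\;1\;2)]^2=(0\;1\;2)$) against auxiliary flip maps $f_k$ built with the track-exchange trick (with separate constructions for $|B|$ even and odd), and the residual side effect at $y_{i-k}$ is cancelled by conjugating with $\ctrl{(0\;2\;1)}{[u]_0}$ and repeating the whole expression twice. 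Your construction would need machinery of comparable care; as written it does not prove the lemma.
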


\begin{proof}
We may assume $n \geq 2$, since $n = 1$ is covered by the previous lemma.
Any clopen set $F$ is a union of disjoint basic cylinders $[u]_i$, and it follows from the assumption that the word $u$ is necessarily $n$-unbordered for each $u$ appearing in this decomposition of $F$. We can take each $i$ and the lengths $|u|$ to be equal, and if
$F = \bigcup_{j = 1}^\ell [u_j]_i$ for finitely many distinct words $u_j \in B^m$, then the union is disjoint and
\[ \ctrl{\pi}{F} = \ctrl{\pi}{[u_\ell]_i} \circ \cdots \circ \ctrl{\pi}{[u_1]_i} \]
for any $\pi \in \Alt(C^n)$, because by the assumption that $F$ is $n$-unbordered, each coordinate can be affected by at most one of these $\ell$ applications of $\pi$. By conjugation with the shift, it is enough to show that $\ctrl{\pi}{[u]_0} \in \PAut[B; C]$ for any $n$-unbordered word $u$ and any $\pi \in \Alt(C^n)$.

We may suppose $B = \{0,\ldots,|B|-1\}, C = \{0,\ldots,|C|-1\}$. Let $(x, y)$ stand for some configuration in $(B \times C)^\Z$. By Lemma~\ref{lem:ControlGrowing}, $\ctrl{\pi}{[u]_i} \in \PAut[B; C]$ for all $\pi \in \Alt(C)$. Since $u$ is $n$-unbordered, it follows that the maps $\ctrl{\psi}{[u]_0}$, where $\psi = \pi_1 \times \pi_2 \times \cdots \times \pi_n$ is a Cartesian product of $n$ even symbol permutations (applied to consecutive symbols), are in $\PAut[B; C]$.

We claim that it is enough to show $\ctrl{(00; \; 10; \; 20)}{[u]_0}$ is in $\PAut[B; C]$. To see this, observe that then also $\ctrl{(00; \; 10; \; 20)}{[u]_j} \in \PAut[B; C]$ by conjugation by partial shifts. By symmetry, also $\ctrl{(00; \; 01; \; 02)}{[u]_j} \in \PAut[B; C]$. Since we can perform even symbol permutations in any coordinate under occurrences of $u$, it is easy to see that the sets $\{v_1, v_2, v_3\}$, $v_i \in C^2$, such that $\ctrl{(v_1; \; v_2; \; v_3)}{[u]_j} \in \PAut[B; C]$, form the hyperedges of a weakly connected hypergraph. Thus, we can perform any even permutation of $C^2$ in any two consecutive symbols under each occurrence of $u$ by Lemma~\ref{lem:Hypergraph}. By Lemma~\ref{lem:UniversalGates}, we can then perform any even permutation in each segment of length $n$ under every occurrence of $u$. Note that by $n$-unborderedness, these permutations indeed happen in disjoint segments of $y$, for distinct occurrences of $u$ in $x$.

Suppose first that $|B|$ is even (the argument is slightly cleaner in this case). Then we claim that the map $f$ defined by $f(x, y)_i = (x_i, y_i)$ if $y_{i+1} \neq 0$, $f(x, y)_i = (x_i, \pi(y_i))$ if $y_{i+1} = 0$, is in $\PAut[B; C]$ where $\pi = (1 \; 2)$.

We claim that
\[ f = (\sigma_1^{-1} \circ \ctrl{(1 \; 2)}{[E]_0} \circ \sigma_1 \circ (\ctrl{\psi}{[0]_0})^\updownarrow)^2, \]
where $\psi = (0 \; 1)(2 \; 3) \cdots ((|B|-2) \; (|B|-1))$, $E = \{0,2,4,...,|B|-2\} \subset B$, and ${\updownarrow} : (B \times C)^\Z \to (C \times B)^\Z$ exchanges the tracks. Conjugation by $\updownarrow$ is performed in the groupoid sense, and means that we modify the top track conditioned on the bottom track. To see that the formula holds, observe that since the set of positions where $0$ occurs in $y$ never changes, the effect on $x$ is cancelled. If $y_{i+1} = 0$, then the symbol at $x_i$ will be even during exactly one of the two applications, while otherwise it is even either zero times or two times, and the flip cancels out.

Then consider $[f, \ctrl{(0 \; 1 \; 2)}{[u]_0}]^2$. Since $[(1,2), (0,1,2)]^2 = (0,1,2)$, it applies the permutation $(0 \; 1 \; 2)$ at $y_i$ at least if $x_{[i,i+|u|-1]} = u$ and $y_{i + 1} = 0$, which is what we want. Let us analyze its side-effects. If $x_{[i,i+|u|-1]} = u$ and $y_{i + 1} \neq 0$, then since $n \geq 2$, $y_{i + 1}$ is nonzero after all partial applications (since $u$ is $n$-unbordered and $f$ does not modify the set of coordinates where $0$ occurs on the bottom track), so in this case the rotation $(0 \; 1 \; 2)$ cancels, and $y_i$ retains its value. This means that if $x_{[i,i+|u|-1]} = u$, the modification of the coordinate $y_i$ is correct.

Suppose next that $x_{[i,i+|u|-1]} \neq u$ and $x_{[i+1,i+|u|]} \neq u$. In this case, $\ctrl{(0 \; 1 \; 2)}{[u]_0}$ does not modify the value of $y_i$ or $y_{i+1}$, and a short calculation shows its conjugate by $f$ does neither, so $[f, \ctrl{(0 \; 1 \; 2)}{[u]_0}] = (\ctrl{(0 \; 1 \; 2)}{[u]_0})^f \circ \ctrl{(0 \; 1 \; 2)}{[u]_0}$ (and thus its square) does not change the value of $y_i$.

Suppose then that $x_{[i+1,i+|u|]} = u$ (so $x_{[i,i+|u|-1]} \neq u$ since $u$ is $n$-unbordered). Suppose first that $y_{i+2} \neq 0$. Then the application of $[f, \ctrl{(0 \; 1 \; 2)}{[u]_0}]$ does not modify $y_{i+1}$ by the previous arguments, and its only possible effect is an application of $(1 \; 2)$ at $y_i$, so this effect cancels when we take the square.

Consider then the case $x_{[i+1,i+|u|]} = u$ and $y_{i+2} = 0$. In this case, a short calculation shows that an application $[f, \ctrl{(0 \; 1 \; 2)}{[u]_0}]^2$ flips $y_i$ if $y_{i+1} \in \{0,1\}$. Since it also rotates $y_{i+1}$ by $(0 \; 2 \; 1)$, the square applies the flip $(1 \; 2)$ at $y_i$ if and only if $y_{i+1} \in \{0, 1\}$. We conclude that this is the only undesired side-effect of $[f, \ctrl{(0 \; 1 \; 2)}{[u]_0}]^2$.

We now deal with the side-effects, i.e.\ coordinates $y_i$ where $x_{[i+1,i+|u|]} = u$, $y_{i+1} \in \{0,1\}$ and $y_{i+2} = 0$. Let us continue by applying
\[ ([f, \ctrl{(0 \; 1 \; 2)}{[u]_0}]^2)^{\ctrl{(0 \; 2 \; 1)}{[u]_{0}}}, \]
i.e. we apply the same map, but conjugated by the application of $(0 \; 2 \; 1)$ at coordinates $i+1$ such that $x_{[i+1,i+|u|]} = u$. The effect on $y_{i+1}$ is as above, namely rotation by $(0 \; 1 \; 2)$, since rotations form an abelian group. Thus, in total we perform $(0 \; 2 \; 1)$ at $y_{i+1}$. But now at $y_i$ we actually perform the flip $(1 \; 2)$ under the exact same condition on the original value of $y_1$, i.e. $y_1 \in \{1, 2\}$, since before the second application, we rotated it back to its original value. Thus this undesired flip is undone.

Repeating all of the above twice, we perform $(0 \; 1 \; 2)$ at $y_i$ under the same condition $y_{i+1} = 0$, $x_{[i,i+|u|-1]} = u$. In other words,
\[ (([f, \ctrl{(0 \; 1 \; 2)}{[u]_0}]^2)^{\ctrl{(0 \; 2 \; 1)}{[u]_{0}}} \circ [f, \ctrl{(0 \; 1 \; 2)}{[u]_0}]^2)^2 = \ctrl{(00; \; 10; \; 20)}{[u]_0} \]
is in $\PAut[B; C]$, and the result follows from Lemma~\ref{lem:UniversalGates} as explained above.

Next, suppose $|B|$ is arbitrary, let $a \neq u_1$ and consider the definition
\[ f' = (\sigma_1^{-1} \circ \ctrl{(1 \; 2)}{[a]_0} \circ \sigma_1 \circ (\ctrl{\psi}{[0]_0})^\updownarrow)^{|B|}, \]
and $\psi = (0 \; 1 \; 2 \; \cdots \; (|B|-1))$. This map applies $(1 \; 2)$ at $y_i$ iff $y_{i+1} = 0$ or $x_{i+1} = a$. We can repeat the previous argument almost verbatim.

Consider $[f', \ctrl{(0 \; 1 \; 2)}{[u]_0}]^2$. It performs $(0 \; 1 \; 2)$ at $y_i$ if $x_{[i,i+|u|-1]} = u$ and $y_{i + 1} = 0$.  If $x_{[i,i+|u|-1]} = u$ and $y_{i+1} \neq 0$, $y_i$ retains its value.
Again the only coordinates $y_i$ where there might be side-effects are ones where $x_{[i+1,i+|u|]} = u$. In such a coordinate, we apply the flip $(1 \; 2)$ if and only if either $y_{i+2} = 0$ and $y_i \in \{0, 1\}$, or if $u_0 = a$.

Whether or not $u_0 = a$, as in the case when $|B|$ is even,
\[ (([f', \ctrl{(0 \; 1 \; 2)}{[u]_0}]^2)^{\ctrl{(0 \; 2 \; 1)}{[u]_{0}}} \circ [f', \ctrl{(0 \; 1 \; 2)}{[u]_0}]^2)^2 \]
is precisely the desired map $\ctrl{(00; \; 10; \; 20)}{[u]_0}$
\end{proof}

\begin{remark}
The two cases depending on the parity of $|B|$ are really about the two cases $(|B|, |C|) \in \{(2, 3), (3, 3)\}$, which were solved last. For larger $|C|$, we can separate data and control, and for example for $|C| \geq 6$ (and any $|B| \geq 2$), since $\Alt(C \setminus \{0\})$ is perfect, one can rather directly write a formula for an arbitrary even permutation of $C \setminus \{0\}$ at $y_i$ controlled by $x_{[i,i+|u|-1]} = u$ and $y_{i+1} = 0$, without side effects. After this, one again concludes by Lemma~\ref{lem:ControlGrowing} and Lemma~\ref{lem:UniversalGates}.
\end{remark}

\begin{lemma}
\label{lem:EvensUnderFIsEnough}
Let $|B|, |C| \geq 2$ and $A = B \times C$, and let $G \leq \Aut(A^\Z)$. If $r \geq 1$ and there is an unbordered word $w$ of length $\ell \geq 24r$ such that the maps $\ctrl{\pi}{[w]_i}$ and $\ctrl{\pi}{[ww]_i}$ are in $G$ for all $\pi \in \Alt(C^\ell)$ and $i \in \Z$, then $\langle \RCA_r(C) \rangle \hookrightarrow G$.
\end{lemma}

\begin{proof}
%
Let us assume $\ell = 24r$; for $\ell > 24r$ we can simply ignore the $C$-symbols under the length $\ell - 24r$ suffix of $w$, which is only a notational complication (this cannot make even permutations odd). We first associate to any $f \in \Aut(C^\Z)$ (with any radius) an element $\hat f \in \RAut((B \times C)^\Z)$ which simulates the action of $f$ in a natural way, so that $f \mapsto \hat f$ is an embedding. See Figure~\ref{fig:Lemma7Figure} for an illustration of the procedure.

The map $\hat f$ is defined as follows: Suppose $(x, y) \in B^\Z \times C^\Z$ and consider an occurrence of $w^m$ in $x$ which is not part of an occurrence of $w^{m+1}$. Note that points $x$ with the property that every maximal run of $w$s is finite are dense, so it is enough to define $\hat f$ uniformly continuously on such $(x,y)$ and extend by continuity. We split the subword of $y$ under the occurrence of $w^m$ into $u_1 v_1 u'_1 v'_1 \cdot u_2 v_2 u'_2 v'_2 \cdots u_m v_m u'_m v'_m$ where $|u_i| = |v_i| = |u'_i| = |v'_i| = 6r$ for all $i$.

The application of $\hat f$ will be defined for $f$ of any radius, but let us already address what will happen when the biradius is at most $r$. When $f$ has biradius at most $r$, we will be able to construct $\hat f$ (which is defined below) inside $G$ by performing a sequence of operations that changes the words $u_i$ and $v_i$, by applying permutations to the subwords $u_iv_i$ and the (non-contiguous) subwords $v_{i-1}u_i$ below the occurrence of $w^m$. The words $u'_i$ and $v'_i$ are changed exactly the same way, i.e.\ when we apply a permutation to the word $u_i v_i$, we apply the same permutation to $u'_i v'_i$, and a permutation applied to $v_{i-1}u_i$ is also applied to $v'_{i-1}u'_i$. The main simulation happens on the words $u_i$ and $v_i$, while the purpose of the primed versions is simply to ensure that all the permutations performed are even: for any permutation $\pi : X \to X$, the diagonal permutation $\pi \times \pi : X \times X \to X \times X$ is even.

We think of $u_i$ as being on top of the word $v_i$, and think of the boundaries of the maximal run $w^m$ as completing the top and bottom word into a conveyor belt; similarly for the primed words $u_i', v_i'$. Accordingly, to define $\hat f$, we apply $f$ to the periodic point $(u_1u_2 \cdots u_m(v_m)^T(v_{m-1})^T\cdots(v_1)^T)^\Z$ and decode the contents of $[0, 12rm]$ into the new contents below the occurrence of $w^m$; similarly for the primed words. Denote the new configuration below $w^m$ as $\bar u_1 \bar v_1 \bar u'_1 \bar v'_1 \cdot \bar u_2 \bar v_2 \bar u'_2 \bar v'_2 \cdots \bar u_m \bar v_m \bar u'_m \bar v'_m$.

This defines the global rule of $\hat f$ uniquely, as the unique continuous extension, and it is easy to see that $\hat f$ is always an automorphism (since $\hat f^{-1}$ is an inverse). If the biradius of $f$ is $r'$, then that of $\hat f$ is $4r' + \ell$ where the factor $4$ comes from skipping over words representing contents of other simulated tapes, e.g.\ skipping over $v_i, u_i', v_i'$ when rewriting $u_i$, and $\ell$ is needed because we need to know whether the sequence of $w$s continues. Since the word to which $f$ is applied only depends on $x$, and we are directly simulating the action of $f$ on an encoded configuration, the map $f \mapsto \hat f$ is a homomorphism, and since $w^m$ can appear in $x$ for arbitrarily large $m$, this is an embedding of $\Aut(C^\Z)$ into $\Aut(A^\Z)$. See \cite{Sa16a} for more detailed explanations of similar arguments.

Now, we show that for any $f \in \RCA_r(C)$, the cellular automaton $\hat f$ is indeed in $G$. 


We now recall the concept of stairs from \cite{Ka96}. Define $L \subset C^{4r}$ as the left stairs of $f$, i.e. the possible contents
\tikz[scale=0.3,baseline=-3]{
\draw (0,0) -- (0,1) -- (2,1) -- (2,0) -- (3,0) -- (3,-1) -- (1,-1) -- (1,0) -- cycle;
\node (u) at (1,0.5) {u}; \node (v) at (2,-0.5) {v};}
of stairs in spacetime diagrams (where the arrow of time points down), or in symbols
\[ L = \{uv \in C^{4r} \;|\; u, v \in C^{2r}, \exists x \in C^\Z: x_{[0,2r-1]} = u, f(x)_{[r,3r-1]} = v \}, \]
and $R \subset C^{4r}$ the right stairs of $f$ defined symmetrically.

Then $|L||R| = |C|^{6r}$ by the argument of \cite{Ka96}, namely the local rules of $f$ and $f^{-1}$ set up an explicit bijection between suitably concatenated left and right stairs and words of length~$6r$. Define $\gamma_L : C^{6r} \to L$ and $\gamma_R : C^{6r} \to R$ for the maps which extract the left and right stair corresponding to a word, and $\dot\gamma_L : C^{6r} \to \dot L$ and $\dot\gamma_R : C^{6r} \to \dot R$ for the corresponding versions for $f^T$, writing $\dot L$ and $\dot R$ for the left and right stairs of $f^T$ (recall that we define $f^T(x) = f(x^T)^T$ where for configurations $x$ we have $x^T_i = x_{-i}$).

The left stairs of $f^T$ are in bijection with the right stairs of $f$ and vice versa: we have $\dot\gamma_L = \gamma_R(w^T)^T$ in a natural sense. Therefore we have $|L||\dot L| = |L||R| = |C|^{6r}$ and similarly for right stairs. Let $\alpha_L : L \times \dot L \to C^{6r}$ and $\alpha_R : R \times \dot R \to C^{6r}$ be any bijections. 

Define also the maps $\beta_L, \beta_R : C^{6r} \to C^{3r}$ which simply extract the left and right halves of a word. 

We now do a sequence of rewrites. First, for all $i$ (simultaneously) we do
\begin{align*}
&u_iv_iu'_iv'_i \mapsto \\
& \alpha_L(\gamma_L(u_i), \dot\gamma_L(v_i)) \alpha_R(\gamma_R(u_i), \dot\gamma_R(v_i)) \cdot \alpha_L(\gamma_L(u'_i), \dot\gamma_L(v'_i)) \alpha_R(\gamma_R(u'_i), \dot\gamma_R(v'_i)) \mapsto \\
& \alpha_L(\gamma_L(u_i), \dot\gamma_L(v_i)) \alpha_L(\gamma_L(u'_i), \dot\gamma_L(v'_i)) \cdot \alpha_R(\gamma_R(u_i), \dot\gamma_R(v_i)) \alpha_R(\gamma_R(u'_i), \dot\gamma_R(v'_i)),
\end{align*}
which can be performed by applying a suitable even permutation on the bottom track, conditioned on having $w$ on the top track. To see that this permutation is even, observe that the first permutation is diagonal (i.e. of the form $\pi \times \pi$ for a permutation $\pi$) and the second is even as the words $u^i,u'^i,v^i,v'^i$ are of even length (so in fact any permutation of the order of the words is even).

Now ``between'' consecutive occurrences of $w$ for $1 \leq i < m$, i.e.\ in the middle of each occurrence of $ww$, do
\begin{align*}
&\alpha_R(\gamma_R(u_i), \dot\gamma_R(v_i)) \alpha_R(\gamma_R(u'_i), \dot\gamma_R(v'_i)) \cdot \\
& \hspace{1cm} \alpha_L(\gamma_L(u_{i+1}), \dot\gamma_L(v_{i+1})) \alpha_L(\gamma_L(u'_{i+1}), \dot\gamma_L(v'_{i+1})) \mapsto \\
& \alpha_R(\gamma_R(u_i), \dot\gamma_R(v_i)) \alpha_L(\gamma_L(u_{i+1}), \dot\gamma_L(v_{i+1})) \cdot \\
&\hspace{1cm} \alpha_R(\gamma_R(u'_i), \dot\gamma_R(v'_i)) \alpha_L(\gamma_L(u'_{i+1}), \dot\gamma_L(v'_{i+1})) \mapsto\\
& \beta_R(\bar u_i) \beta_L(\bar u_{i+1}) \beta_R(\bar v_i) \beta_L(\bar v_{i+1}) \cdot \beta_R(\bar u_i') \beta_L(\bar u_{i+1}') \beta_R(\bar v_i') \beta_L(\bar v_{i+1}') \mapsto \\
& \beta_R(\bar u_i) \beta_R(\bar v_i) \beta_R(\bar u_i') \beta_R(\bar v_i') \cdot \beta_L(\bar u_{i+1}) \beta_L(\bar v_{i+1})  \beta_L(\bar u_{i+1}') \beta_L(\bar v_{i+1}').
\end{align*}

In the second transformation in the above formula, we use the fact that the word $\beta_R(\bar u_i) \beta_L(\bar u_{i+1})$ can be determined from $\gamma_R(u_i)$ and $\gamma_L(u_{i+1})$ (and indeed the correspondence is a bijection), and similarly the pairs
\begin{align*}
(\dot \gamma_R(v_i), \dot \gamma_L(v_{i+1})) &\longleftrightarrow \beta_R(\bar v_i) \beta_L(\bar v_{i+1}), \\
(\gamma_R(u'_i), \gamma_L(u'_{i+1})) &\longleftrightarrow \beta_R(\bar u'_i) \beta_L(\bar u'_{i+1}) \\
(\dot \gamma_R(v'_i), \dot \gamma_L(v'_{i+1})) &\longleftrightarrow \beta_R(\bar v'_i) \beta_L(\bar v'_{i+1})
\end{align*}
are intercomputable. This argument is from \cite{Ka96}.

To clarify, the above permutations are applied to words of length $\ell$ on the bottom track, conditioned on $[ww]_{-12r}$ on the top track. The permutations are thus applied with an offset, and an individual application under an occurrence of $ww$ will not modify the $12r$ leftmost and rightmost symbols under the occurrence. In total at this step we modify all but the $12r$ left- and rightmost cells under a maximal occurrence of $w^m$.


Now, we deal with the remaining $12r$ coordinates under left corners of maximal occurrences $w^m$ by applying the (even) permutation
\begin{align*}
&\alpha_L(\gamma_L(u_1), \dot\gamma_L(v_1)) \alpha_L(\gamma_L(u'_1), \dot\gamma_L(v'_1)) \\
&\mapsto \beta_L(\bar u_1) \beta_L(\bar v_1) \beta_L(\bar u'_1) \beta_L(\bar v'_1) 
\end{align*}
of words of length $12r$ on the bottom track, conditioned on $[w]^c_{-\ell} \cap [w]_0 = [w]_0 \setminus [ww]_{-\ell}$ on the top track (the latter form shows that we have this controlled application in $G$). Here, observe that since the words $\bar u_i, \bar u'_i, \bar v_i, \bar v_i'$ were defined by applying $f$ to a periodic point in a conveyor belt fashion, the word $\beta_L(\bar u_1) \beta_L(\bar v_1)$ can be deduced from $(\gamma_L(u_1), \dot\gamma_L(v_1))$, and similarly for the primed versions. We deal with the right borders similarly.

Finally, to obtain the correct contents under $w^m$, we only need to perform the position swap
\begin{align*}
&\beta_L(\bar u_i) \beta_L(\bar v_i) \beta_L(\bar u'_i) \beta_L(\bar v'_i) \cdot \beta_R(\bar u_i) \beta_R(\bar v_i) \beta_R(\bar u'_i) \beta_R(\bar v'_i) \\
&\mapsto \beta_L(\bar u_i) \beta_R(\bar u_i) \beta_L(\bar v_i) \beta_R(\bar v_i) \cdot \beta_L(\bar u_i') \beta_R(\bar u_i') \beta_L(\bar v_i') \beta_R(\bar v_i') \\
&= \bar u_i \bar v_i \cdot \bar u_i' \bar v'_i
\end{align*}
under each occurrence of $w$. Note that this permutation is even, as we are simply applying the permutation $(1)(2 \; 5 \; 3)(4 \; 6\; 7)(8)$ to the ordering of words of length $3r$.
\end{proof}

\input{Lemma7FigureCM}

\thmMain*

\begin{proof}
Lemma~\ref{lem:FiveCase} implies that for any unbordered $w$, any $\pi \in \Alt(C^{|w|})$ controlled by any $|w|$-unbordered clopen set is in $\PAut[B; C]$, in particular this is true for the clopen sets $[w]_i$ and $[ww]_i$ for any $i$. From Lemma~\ref{lem:EvensUnderFIsEnough} we get that the groups $\langle \RCA_r(C) \rangle$ can be embedded for arbitrarily large $r$. Since $\RCA(C) = \bigcup_r \langle \RCA_r(C) \rangle$ and every finitely generated group of cellular automata over any alphabet embeds in $\RCA(C)$ \cite{KiRo90}, we conclude.
\end{proof}

Lemma~\ref{lem:EvensUnderFIsEnough} also directly applies to the commutator subgroup of $\RCA(B \times C)$ (since large enough alternating groups are perfect), so we also obtain that the commutator subgroup of $\RCA(B \times C)$, for any $|B|, |C| \geq 2$, is f.g.-universal. See Theorem~\ref{thm:Sofic} for a stronger result.

\subsection{The prime case}
\label{sec:Prime}

\begin{lemma}
\label{lem:Prime}
If $n \in \primes$, then $\PAut(n) \cong \langle \sigma \rangle \times S_n$.
\end{lemma}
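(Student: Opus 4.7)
The plan is first to reduce to a clean generator picture. Since $n$ is prime, the prime decomposition in the definition of $\PAut(A)$ from Section~\ref{sec:PAut} has only one factor, so (by Lemma~\ref{lem:PAutWD}, up to isomorphism) $\PAut(n)$ is the subgroup $\PAut[B] \leq \Aut(B^\Z)$ generated by the single partial shift $\sigma_1 = \sigma$ together with the symbol permutations $\bar\pi$, $\pi \in \Sym(B)$, where $|B| = n$.

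Next, I would record two structural facts. First, every $\bar\pi$ commutes with $\sigma$, since $\bar\pi$ acts identically in every cell while $\sigma$ only translates positions. Second, the map $\pi \mapsto \bar\pi$ is an injective homomorphism $\Sym(B) \hookrightarrow \Aut(B^\Z)$, the injectivity being immediate from evaluation on constant configurations. Together these give a well-defined surjective homomorphism
\[
  \phi : \Z \times \Sym(B) \twoheadrightarrow \PAut[B], \qquad \phi(k,\pi) = \sigma^k \circ \bar\pi,
\]
and the entire content of the lemma is the injectivity of $\phi$.

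For injectivity I would test the relation $\phi(k,\pi) = \id$ on a carefully chosen $x \in B^\Z$ that is simultaneously aperiodic under $\sigma$ and contains every symbol of $B$. Such an $x$ is easy to produce, e.g.\ a right tail that first enumerates $B$ and then follows a Sturmian-like pattern, extended by $0^\omega$ to the left. From $\sigma^k \bar\pi(x) = x$ one extracts $x_{i+k} = \pi^{-1}(x_i)$ for every $i$; iterating $m := \ord(\pi)$ times yields $x_{i+mk} = x_i$ for every $i$, so aperiodicity forces $k = 0$, and then $\pi^{-1}(x_i) = x_i$ for all $i$ combined with the fact that every symbol of $B$ occurs in $x$ forces $\pi = \id$.

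I do not expect a substantive obstacle: once the generators have been pinned down by the definition of $\PAut(A)$, the remaining work is a single evaluation on a well-chosen configuration. Conceptually, primality is used only to collapse the Cartesian decomposition to a single factor, so that the position-shift action and the symbol-permutation action fully decouple; with more tracks the partial shifts no longer commute with general symbol permutations (they only commute with the track-preserving ones), which is precisely the reason the composite case of Theorem~\ref{thm:Main} admits a much richer group structure.
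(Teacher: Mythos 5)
Your proposal is correct and follows essentially the same route as the paper: reduce to the single-factor case where $\PAut(n) = \PAut[A]$ is generated by $\sigma$ and the symbol permutations, use that these commute, and rule out nontrivial relations to get the direct product $\langle\sigma\rangle \times \Sym(A)$. Your explicit injectivity check via an aperiodic test configuration containing all symbols is just a spelled-out version of the paper's observation that no symbol permutation is a nontrivial shift on a full shift.
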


\begin{proof}
Let $|A| = n$ and observe that $\PAut(A) = \PAut[A]$. The shift $\sigma$ commutes with symbol permutations, no symbol permutation is a non-trivial shift map on a full shift, and $\PAut[A]$ is by definition generated by symbol permutations and the shift $\langle \sigma \rangle$. Thus, the shift and the symbol permutations form a complementary pair of subgroups in $\PAut[A]$, and thus $\PAut[A]$ is an internal direct product of $\langle \sigma \rangle$ and the symbol permutations, which form a finite group isomorphic to $\Sym(A)$.
\end{proof}

\subsection{The linear case}
\label{sec:Linear}

By Lemma~\ref{lem:Prime}, $\PAut(A)$ is linear (even over $\R$) for somewhat uninteresting reasons when $|A|$ is prime. The case $|A| = 4$ gives a linear group as well, but a more interesting one. The crucial point is that all permutations of $\Z_2^2$ are affine, so all symbol permutations are ``affine''. 

Write $\Z_2[\pmb x, \pmb x^{-1}]$ for the ring of Laurent polynomials over the two-element field $\Z_2$. Write $\Z_2((\pmb x))$ for the field of formal Laurent series over $\Z_2$ (with only finitely many negative powers of $\pmb x$), which contains the ring $\Z_2[\pmb x, \pmb x^{-1}]$. For any (commutative unital) ring $R$, write $\GL(n, R)$ for the group of invertible $n$-by-$n$ matrices over $R$.
<<
\begin{theorem}
\label{thm:Four}
The group $\PAut(4)$ is linear, and has a faithful $8$-dimensional representation over $\Z_2((\pmb x))$. In fact,
\[ \PAut(4) \cong \Z_2^2 \rtimes \GL(2, \Z_2[\pmb x, \pmb x^{-1}]). \]
\end{theorem}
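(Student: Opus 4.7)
The central algebraic observation is that $S_4 \cong \AGL(2, \Z_2) = \Z_2^2 \rtimes \GL(2, \Z_2)$, so every symbol permutation on $\Z_2^2$ is affine over $\Z_2$. This motivates realising $\PAut(4)$ as a group of affine transformations of a $\Z_2[\pmb x, \pmb x^{-1}]$-module. First I would view the abelian group $(\Z_2 \times \Z_2)^\Z$ as a module over $R := \Z_2[\pmb x, \pmb x^{-1}]$ with $\pmb x$ acting as the shift $\sigma$, and identify the submodule of constant configurations with $\Z_2^2$. Each generator of $\PAut(4)$ then represents as an affine map $v \mapsto Mv + c$ with $M \in \GL(2, R)$ and $c \in \Z_2^2$: a partial shift acts as $\mathrm{diag}(\pmb x, 1)$ or $\mathrm{diag}(1, \pmb x)$ with $c = 0$, and a symbol permutation decomposed as $\pi(v) = Nv + b$ acts as the constant matrix $N \in \GL(2, \Z_2)$ with constant translation $b^\Z \in \Z_2^2$.

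Next I would show that the collection of such affine maps forms a subgroup of $\Aut((\Z_2 \times \Z_2)^\Z)$ isomorphic to $\Z_2^2 \rtimes \GL(2, R)$. Composition gives $(M_1, c_1) \circ (M_2, c_2) = (M_1 M_2, M_1 c_2 + c_1)$, and since $\pmb x \cdot c_2 = c_2$ we have $M_1 c_2 = M_1(1) c_2 \in \Z_2^2$, where $M_1(1) \in \GL(2, \Z_2)$ is obtained by evaluating the entries of $M_1$ at $\pmb x = 1$. Hence the translation part remains constant and the subgroup structure is the semidirect product $\Z_2^2 \rtimes_\phi \GL(2, R)$ with $\phi : M \mapsto M(1)$, which gives exactly the action appearing in the theorem. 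Sending each $\PAut(4)$-element to its affine representation then yields a homomorphism $\Phi : \PAut(4) \to \Z_2^2 \rtimes \GL(2, R)$, injective by evaluating on the zero configuration and its shifts.

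For surjectivity of $\Phi$, I would show that $\GL(2, R)$ is generated by $\GL(2, \Z_2)$ together with $\mathrm{diag}(\pmb x, 1)$ and $\mathrm{diag}(1, \pmb x)$, while the pure translations $\Z_2^2$ are already realised by symbol permutations of the form $v \mapsto v + b$. Since $R^\times = \{\pmb x^k : k \in \Z\}$, the diagonal generators cover the determinant, so it suffices to generate $\mathrm{SL}(2, R)$. Because $R$ is a localization of the Euclidean domain $\Z_2[\pmb x]$ at $\pmb x$, Gaussian elimination shows that $\mathrm{SL}(2, R)$ is generated by the elementary matrices $I + p E_{ij}$ for $p \in R$; each such matrix is obtained by conjugating the $\Z_2$-coefficient elementary matrix (already in $\GL(2, \Z_2)$) by a suitable product of $\mathrm{diag}(\pmb x, 1)$ and $\mathrm{diag}(1, \pmb x)$ and multiplying out the resulting Laurent polynomials. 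Linearity of $\PAut(4)$ is then immediate from the chain $R \hookrightarrow \Z_2((\pmb x))$ and $\AGL(2, K) \hookrightarrow \GL(3, K)$ for a field $K$, and the sharper $8$-dimensional version of this representation can be obtained by tracking constant and polynomial parts separately.

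The main obstacle is really the bookkeeping in the second paragraph: one needs to verify that the class of affine maps with \emph{constant} translation part is genuinely closed under composition, and that the induced twist is exactly the evaluation $M \mapsto M(1)$. Once this observation is clean, the generation result for $\GL(2, R)$ is a standard Smith-normal-form argument, and the semidirect product identification is automatic.
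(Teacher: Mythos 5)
Your identification $\PAut(4) \cong \Z_2^2 \rtimes \GL(2, \Z_2[\pmb x, \pmb x^{-1}])$ is correct and follows essentially the same route as the paper: the paper also realizes $\PAut(4)$ as the group of maps $x \mapsto f(x) + a^\Z$ with $f$ a reversible linear CA, splits it as an internal semidirect product of the constant translations by the linear RCA, and identifies the linear part with $\GL(2, \Z_2[\pmb x, \pmb x^{-1}])$ via local rules; your computation that the twist is $M \mapsto M(1)$ matches the description the paper gives immediately after the theorem. The one place you genuinely diverge is surjectivity: the paper cites Kari for the fact that linear symbol permutations and partial shifts generate the linear part, whereas you prove it directly by reducing to elementary matrices over the Euclidean ring $\Z_2[\pmb x, \pmb x^{-1}]$ and conjugating the $\Z_2$-elementary matrices by powers of $\diag(\pmb x, 1)$, $\diag(1, \pmb x)$. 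That argument is sound and makes the proof self-contained, which is a small plus.

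The gap is in your last step, the linearity and the $8$-dimensional representation. Precisely because the twist is evaluation at $\pmb x = 1$ rather than the natural action of $\GL(2,K)$ on $K^2$ (where $K = \Z_2((\pmb x))$), your group is not a subgroup of $\AGL(2,K)$ in the way your ``chain'' suggests: the usual affine-to-linear trick $(c, M) \mapsto \left(\begin{smallmatrix} M & c \\ 0 & 1 \end{smallmatrix}\right)$ fails to be a homomorphism, since composition in your group produces $M_1(1)c_2$ while matrix multiplication produces $M_1 c_2$, and indeed no nonzero finite subgroup of the translation part $K^2$ is invariant under the natural action of $\GL(2,R)$ (the matrix $\diag(\pmb x, \pmb x)$ has no nonzero finite orbits), so the evident block-form embedding cannot exist. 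Thus ``immediate from $\AGL(2,K) \hookrightarrow \GL(3,K)$'' does not go through, and ``tracking constant and polynomial parts separately'' is too vague to replace it. The paper's route: the linear part $H \cong \GL(2, \Z_2[\pmb x, \pmb x^{-1}]) \leq \GL(2, K)$ has index $4$ in $G$, so the representation of $G$ induced from the defining $2$-dimensional representation of $H$ is $8$-dimensional over $K$ (and faithful, since the defining representation is faithful and induction from a finite-index subgroup preserves faithfulness). If you only want linearity, a quick fix in your framework is the embedding $(c, M) \mapsto (M, (c, M(1))) \in \GL(2,K) \times \AGL(2, \Z_2)$, but to land on the specific $8$-dimensional statement you need the induced representation or the explicit coset-by-coset construction.
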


\begin{proof}
We begin with the second claim.
By renaming, we may assume the Cartesian product decomposition is $A = \{(0,0), (0,1), (1,0), (1,1)\}$, and we give $A$ the $\Z_2^2$-structure that arises from bitwise addition modulo $2$ with respect to this decomposition. Give also $A^\Z$ the structure of an abelian group, by cellwise addition.

Consider maps of the form $x \mapsto f(x) + a^\Z$, where $a \in A$ and $f$ is a reversible linear cellular automaton in the sense that $f(x + y) = f(x) + f(y)$ for all $x, y \in A^\Z$. A straightforward computation shows that such maps form a subgroup $G$ of $\Aut(A^\Z)$. The subgroup $K$ of maps $x \mapsto x + a^\Z$ for $a \in A$ is isomorphic to $\Z_2^2$, and a direct computation shows that it is normal in $G$. The subgroup $H$ of reversible linear cellular automata is also a subgroup, and we have $KH = G$, $K \cap H = 1$. It follows that $G = K \rtimes H$ is an internal semidirect product.

We can in a standard way see $H$ as the group $\GL(2, \Z_2[\pmb x, \pmb x^{-1}])$, by writing the local rule of a cellular automaton $f$ satisfying $f(x + y) = f(x) + f(y)$ as a matrix, so $G \cong \Z_2^2 \rtimes \GL(2, \Z_2[\pmb x, \pmb x^{-1}])$.

The generators of $\PAut(4)$ are contained in $G$ since all symbol permutations of $A$ are affine, and conversely it is straightforward to show that linear symbol permutations and partial shifts are a generating set for $H$, see e.g.\ \cite{Ka00}, and the maps $x \mapsto x + a^\Z$ are among generators of $\PAut(4)$ as well. It follows that $\PAut(4) = G$.

For the first claim, since $[G : H] = 4$ and $H$ is a $2$-dimensional matrix group, where the entries can be seen to be in the field $\Z_2((\pmb x))$, the induced representation of $G$ is $8$-dimensional over the same field.
\end{proof}

The action $\phi$ of $\GL(2, \Z_2[\pmb x, \pmb x^{-1}])$ on $\Z_2^2$ is the following: Let
\[ h : \GL(2, \Z_2[\pmb x, \pmb x^{-1}]) \to \GL(2, \Z_2) \]
be the group homomorphism obtained by applying the ring homomorphism extending $\pmb x^i \mapsto 1$ in each entry. The action $\phi$ is the pullback of the natural action of $\GL(2, \Z_2)$ on $\Z_2^2$ through $h$.

The group $\PAut(A)$ contains free groups when $|A| = 4$, as shown in the next section. It also contains a copy of the lamplighter group $\Z_2 \wr \Z$ (actually two natural embeddings of it, one acting on the top track and one on the bottom).

\subsection{Non-linearity and non-amenability}
\label{sec:LinAme}

We prove that apart from trivial cases (where the group is virtually cyclic and thus linear over any field admitting invertible matrices of infinite order), none of $\PAut[B; C]$ are amenable, and $\PAut[2;2]$ is the only linear case. This follows from natural embeddings of groups of the form $(\Z/m\Z)^\omega * (\Z/k\Z)^\omega$, where $m \leq |B|, k \leq |C|$.

In all cases $|B|,|C| \geq 2$ except $\PAut[2; 2]$, \emph{all} groups of the form $G^\omega * H^\omega$ are in $\PAut[B; C]$ for finite groups $G, H$, by f.g.-universality and by closure properties by Theorem~\ref{thm:ClosureProps}, but we give the simple direct argument and explain why the groups $(\Z/m\Z)^\omega * (\Z/k\Z)^\omega$ are indeed typically not even subdirect products of linear groups. The embedding is by RCA with one-sided bineighborhoods, thus we also obtain these subgroups in $\Aut((B \times C)^\N)$.

For $G$ a group, write $G^\omega$ for the direct union of $G^n$ as $n \rightarrow \infty$ (with the natural inclusions). For groups $G, H$ write $G * H$ for their free product. 

\begin{lemma}
\label{lem:FreeProduct}
Let $|B| = m, |C| = k$. 
Let $G, H$ be abelian groups with $|G| \leq m$, $H \leq k$. Then $G^\omega * H^\omega \leq \PAut[B; C]$.
\end{lemma}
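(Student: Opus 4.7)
The plan is to construct two abelian subgroups $\Gamma_1 \cong G^\omega$ and $\Gamma_2 \cong H^\omega$ of $\PAut[B; C]$ and verify, via a ping-pong style argument, that together they freely generate the free product $G^\omega * H^\omega$. First I would embed $G \hookrightarrow \Sym(B)$ and $H \hookrightarrow \Sym(C)$ via the left regular representations, so that every nontrivial element acts without fixed points on the target set; fix basepoints $b_0 \in B$ and $c_0 \in C$. For $g \in G$ let $\lambda_g \in \Sym(B \times C)$ be the symbol permutation sending $(b, c_0) \mapsto (g \cdot b, c_0)$ and fixing every $(b, c)$ with $c \neq c_0$; analogously define $\rho_h$ for $h \in H$ by swapping the roles of the two factors. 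Both are in $\PAut[B; C]$ since they are symbol permutations. Conjugation by partial shifts produces $\alpha_g^{(n)} := \sigma_1^{-n} \lambda_g \sigma_1^n$ and $\beta_h^{(n)} := \sigma_2^{-n} \rho_h \sigma_2^n$ for $n \in \Z$; a direct computation shows $\alpha_g^{(n)}$ applies $g$ to the first-track coordinate at cell $i$ precisely when $y_{i-n} = c_0$, and dually for $\beta_h^{(n)}$.

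The second step is to identify $\Gamma_1 := \langle \alpha_g^{(n)} : g \in G, n \in \Z\rangle$ with $G^{(\Z)} \cong G^\omega$, and symmetrically $\Gamma_2 := \langle \beta_h^{(n)}\rangle$ with $H^\omega$. The generators of $\Gamma_1$ pairwise commute because $G$ is abelian: at every cell of the first track, the two composition orders of $\alpha_g^{(n)}$ and $\alpha_{g'}^{(n')}$ apply the same commuting product in $G$, and the second track is never modified, so the actions commute. This yields a surjection $G^{(\Z)} \twoheadrightarrow \Gamma_1$ sending the indicator of $(n, g)$ to $\alpha_g^{(n)}$. For injectivity, a candidate nontrivial element $\prod_n \alpha_{g_n}^{(n)}$ is tested on the configuration $(b_0^\Z, y)$ with $y_i = c_0$ iff $i = 0$: each $\alpha_{g_n}^{(n)}$ then modifies only cell $x_n$ by applying $g_n$, and faithfulness of the regular representation forces distinct elements of $G^{(\Z)}$ to have distinct images.

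The main obstacle is the last step: verifying that $\langle \Gamma_1, \Gamma_2\rangle = \Gamma_1 * \Gamma_2$. I would apply the ping-pong lemma to the action on configurations agreeing with $(b_0^\Z, c_0^\Z)$ off a finite window. The key structural observation is that $\Gamma_1$ preserves the second track and acts on selected first-track cells (at positions determined by where $c_0$ occurs in $y$) via $G$, while $\Gamma_2$ symmetrically preserves the first track and acts on second-track cells at $b_0$-positions in $x$. Starting from a ``cross'' basepoint -- a configuration carrying a single non-$b_0$ cell in $x$ and a single non-$c_0$ cell in $y$ at distinct positions -- I would track how the pair of edit supports in the two tracks evolves under a reduced alternating word: freeness of the regular representations forces each nontrivial letter to introduce a new edit on its own track at a location dictated by the current state of the other track, and abelianness collapses intra-letter compositions into well-defined single-cell actions. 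Chasing this argument should produce two disjoint sets of configurations swapped by nontrivial letters of the two families, to which the ping-pong lemma applies to give the internal free product. Since only one direction of partial shift is used, the resulting automorphisms have one-sided bineighborhoods, so the same embedding works in $\Aut((B \times C)^\N)$.
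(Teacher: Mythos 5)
Your generator construction is essentially the paper's (apply a free action of $G$ on $B$ to a first-track cell, conditioned on a single symbol of the second track at an offset, realized as a symbol permutation conjugated by partial shifts), and your identification of $\Gamma_1\cong G^\omega$ matches the paper's argument. But the freeness step has a genuine error: you let the offsets $n$ range over all of $\Z$, and for the groups as you define them the claim $\langle\Gamma_1,\Gamma_2\rangle=\Gamma_1*\Gamma_2$ is \emph{false}. Indeed $\alpha_g^{(0)}=\lambda_g$ and $\beta_h^{(0)}=\rho_h$ are both symbol permutations, so $\langle\alpha_g^{(0)},\beta_h^{(0)}\rangle$ is a subgroup of the finite group $\Sym(B\times C)$, whereas in a free product the subgroup generated by one nontrivial element from each factor is infinite. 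Mixed signs fail too: $\alpha_g^{(1)}$ and $\beta_h^{(-1)}$ act diagonally on the independent cell pairs $(x_i,y_{i-1})$, so they again generate a finite group. This is exactly why the paper restricts the offsets to $\N_+$ (all controls strictly on one side) before making the free-product claim; your proposal never imposes this restriction, and no ping-pong argument can rescue the statement without it.

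Even granting the restriction to positive offsets, your last step is only a sketch and its setup is flawed: the space of configurations agreeing with $(b_0^\Z,c_0^\Z)$ off a finite window is not invariant under $\Gamma_1$, because the background second-track symbol equals the controlling symbol $c_0$, so a nontrivial $\alpha_g^{(n)}$ changes cofinitely many first-track cells (the paper avoids this by taking the controlling symbol, its ``$1$'', distinct from the background ``$0$''). Moreover, a nontrivial element of $\Gamma_1$ can act trivially on many configurations (contributions from different offsets can cancel at a given cell), so fixed ping-pong sets $X_1,X_2$ that work for \emph{every} nontrivial element of these infinitely generated factors are not easy to produce and you do not produce them. The paper instead proves freeness word by word: for a reduced alternating word it extracts the maximal offset $r_i$ used by each letter and writes down one explicit configuration, with blocks of length $r_i-1$ of background symbols separating cells carrying $g_i^{-1}\cdot 1$ and $h_i^{-1}\cdot 1$, on which the rightmost ``active'' symbol moves strictly to the right at each letter, so the word acts nontrivially. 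You would need either to reproduce that explicit construction or to supply genuinely invariant ping-pong sets after fixing the offset and background issues; as written, the key step is both incorrect (offsets in $\Z$) and incomplete.
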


\begin{proof}
Let $B = \{0,...,m-1\}, C = \{0,...,k-1\}$. The assumption $|G| \leq m$, $H \leq k$ is equivalent to the assumption that $G$ and $H$ act on $B$ and $C$, respectively, with at least one free orbit. Fix such actions. By renaming, we may assume $1 \in \{0,...,m-1\}$ and $1 \in \{0,...,k-1\}$ are representatives of the free orbits of $G$ and $H$, respectively.

The group $G^\omega$ is generated by the following maps: for $g \in G$ and $i \in \Z$, define
\[ f_{g, i}(x, y)_0 = \left\{\begin{array}{ll}
(g(x_0), y_0) & \mbox{if } y_{-i} = 1. \\
(x_0, y_0) & \mbox{otherwise}
\end{array}\right. \]
Extend $f_{g, i}$ to a cellular automaton by shift-commutation. These maps are easily seen to be in $\PAut[B; C]$, as $f_{g, 0}$ is a symbol permutation and the others are conjugate to it by partial shifts. Clearly we obtain a copy of $G$ by fixing $i$. Varying $i$, the maps commute since $G$ is abelian. By applying them to $(0^\Z, {^\omega}0.10{^\omega})$ we see that they do not satisfy any additional relations, and thus we have a copy of $G^\omega$. Define similarly $f_{h, i}$ for $h \in H$, by exchanging the roles of the tracks.

Of course restricting $i$ to $\N_+$, the maps $f_{g, i}$ and $f_{h, i}$ still give copies of $G^\omega$ and $H^\omega$, respectively. Denote these copies by $G' \cong G^{\omega}$ and $H' \cong H^{\omega}$. We show that together they satisfy no other relations, that is, the maps $f_{g,i}, f_{h,i}$ for $i > 0$ generate a copy of $G' * H' \cong G^\omega * H^\omega$.

Suppose that $f_w = f_\ell \circ \cdots \circ f_2 \circ f_1$ is a reduced element where $f_i \in G'$ for odd $i$, $f_i \in H'$ for even $i$, and that $\ell$ is even (the other three cases are completely symmetric). For each odd $i$ there is a ``maximal'' copy of $G$ used by $f_i$, i.e. the reduced presentation of $f_i$ contains some $f_{g_i, r_i}$ with $r_i \geq 1$ maximal and $g_i \in G \setminus \{1_G\}$. Similarly, for even $i$ there is some maximal copy of $H$ used, denote $r_i \geq 1$, $h_i \in H \setminus \{1_H\}$.

Now, a direct computation shows the $f_w$ acts non-trivially on the following configuration:
\[ {^{\omega}}\!\sm{0}{0} \sm{0}{1}\sm{0}{0}^{r_1-1} \sm{g_1^{-1} \cdot 1}{0}\sm{0}{0}^{r_2-1} \sm{0}{h_2^{-1} \cdot 1}\sm{0}{0}^{r_3-1} \cdots \sm{0}{0}^{r_\ell-1}\sm{0}{h_{\ell}^{-1} \cdot 1}\sm{0}{0}^{\omega} \]
For this, observe that $g_i^{-1} \cdot 1 \neq 1$ and $h_i^{-1} \cdot 1 \neq 1$ since $1$ is a representative of the free orbit on both tracks, and thus the rightmost ``active'' $1$ moves to the right on each step.
\end{proof}

\begin{lemma}
\label{lem:NonAmenable}
Let $G$ and $H$ be non-trivial groups. Then $G^\omega * H^\omega$ is not amenable.
\end{lemma}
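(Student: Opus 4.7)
The plan is to exhibit a copy of the free group $F_2$ inside $G^\omega * H^\omega$. Non-amenability follows automatically, since subgroups of amenable groups are amenable but $F_2$ is not. The standard classical theorem ``$A*B \supseteq F_2$ whenever $A,B \neq 1$ and $(A,B) \neq (\Z_2,\Z_2)$'' would also suffice (both $G^\omega$ and $H^\omega$ are infinite), but a direct explicit embedding works uniformly and is simpler to write down.

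First I would use that $G^\omega = \bigoplus_{i \geq 1} G$ and place a chosen $g \in G \setminus \{1\}$ in two different direct summands, obtaining commuting, distinct, nontrivial elements $g_1, g_2 \in G^\omega$ with the additional property that $g_1^{\pm 1} g_2^{\mp 1} \neq 1$ and $g_1^{\pm 1} g_2^{\pm 1} \neq 1$. Symmetrically I would pick $h_1, h_2 \in H^\omega$ from two distinct summands. Using different summands rather than a single copy of $G$ is the whole point: it guarantees that all the relevant products of the form $g_i^{\pm 1} g_j^{\pm 1}$ (and the $h$-analogues) with $i \neq j$ are nontrivial, without any assumption on the orders or abelianness of $g$ and $h$.

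Next I would set $x = g_1 h_1$ and $y = g_2 h_2$ in $G^\omega * H^\omega$, and verify $\langle x,y\rangle \cong F_2$ by a syllable analysis. The syllable profile of $x,y$ is $[G^\omega, H^\omega]$ and of $x^{-1}, y^{-1}$ is $[H^\omega, G^\omega]$. Given any reduced word $w = w_1 \cdots w_n$ in $\{x^{\pm 1}, y^{\pm 1}\}$, I would expand it and inspect each boundary $w_i w_{i+1}$:
\begin{itemize}
\item Same sign: adjacent syllables lie in different factors, no merge, boundary contributes $2$ syllables.
\item Opposite signs: adjacent syllables lie in the same factor and merge into a product of the form $h_i^{\pm 1} h_j^{\mp 1}$ or $g_i^{\pm 1} g_j^{\mp 1}$ with $i \ne j$; reducedness of $w$ excludes $w_{i+1} = w_i^{-1}$, so by the choice of $g_1,g_2,h_1,h_2$ this merged syllable is nontrivial.
\end{itemize}
Either way the result after all merges is an alternating reduced word in $G^\omega * H^\omega$ of length at least $n+1$, hence nontrivial. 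This shows $w \neq 1$, so $x$ and $y$ generate a free subgroup of rank two.

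The only point that requires genuine care is confirming that no merge can collapse to the identity — this is what rules out degenerate behaviour when, for instance, $G = H = \Z_2$ (where the ``one-copy'' analogue $G*H$ is itself amenable). The commuting-copies trick absorbs this case for free, so no case split on the structure of $G$ and $H$ is needed, and the whole argument is a direct check rather than an appeal to Kurosh or ping-pong.
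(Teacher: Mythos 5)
Your proof is correct. It follows the same overall strategy as the paper: exhibit an explicit rank-two free subgroup of the free product and conclude non-amenability, with the normal form theorem for free products doing the real work. The difference is in the choice of generators and the level of detail. The paper proves the more general dichotomy (a free product of nontrivial groups fails to contain $F_2$ only for $\Z_2 * \Z_2$) and takes $gh$ and $g'h$ with $g \neq g'$ nontrivial in the first factor and a \emph{shared} $h$ in the second, citing the normal form theorem; you instead exploit the $\omega$-power structure, placing the two $G$-components and the two $H$-components in distinct direct summands and checking freeness of $g_1h_1, g_2h_2$ by a direct syllable analysis. Your choice is in fact the more robust one: with the paper's pair one has $(gh)(g'h)^{-1} = g(g')^{-1}$, an element of the first factor which can be torsion (e.g.\ always, when the factor is $\Z_2^\omega$, which is exactly the situation arising in the application to $\PAut[2;2]$), so that pair need not freely generate and the cited justification requires repair -- for instance by the kind of cross-summand choice you made, or by passing to a different pair; the containment of $F_2$, and hence the lemma, of course still holds. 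Your verification that every merged syllable $g_i^{\pm 1}g_j^{\mp 1}$ or $h_i^{\pm 1}h_j^{\mp 1}$ with $i \neq j$ is nontrivial, that alternation is preserved, and that at least $n+1$ syllables survive, is complete and needs no case split on $G$ and $H$.
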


\begin{proof}
A stronger fact is true: a free product of two non-trivial groups $G, H$ does not contain the free group on two generators if and only if it is amenable if and only if it is virtually cyclic if and only if $G \cong H \cong \Z_2$. Namely, $\Z_2 * \Z_2$ is the infinite dihedral group, which is virtually cyclic. If $g, g' \in G \setminus \{1_G\}$, $g \neq g'$ and $h \neq 1_H$, then $gh$ and $g'h$ freely generate a free group by the normal form theorem of free products \cite{LySc15}.
\end{proof}

The following lemma is classical. We give a direct proof mimicking \cite[Theorem~8.1.11]{Ro96} as suggested by user Panurge on the MathOverflow website~\cite{Pa16}.

\begin{lemma}
\label{lem:BoundedExp}
If $G$ is a linear $p$-group over a field of characteristic $q \neq p$, then $G$ is finite. The order of $G$ is at most $e^{d^3}$ where $e$ is the exponent and $d$ the dimension of the vector space.
\end{lemma}

\begin{proof}
We may assume $G$ acts on a vector space $V$ of dimension $d$ over an algebraically closed field $F$. Suppose $g^e = 1$ for all $g \in G$, where $e$ is a power of $p$. It follows from $g^e = 1$ that each root of the characteristic polynomial of $g$ is an $e$th root of unity (consider for example the Jordan normal form of $g$). There are at most $e$ such roots $\lambda_1, \ldots, \lambda_{e'}$, $e' \leq e$, so there are at most $e^d$ choices for the trace $\tr(g) = \sum_{j = 1}^d \lambda_{i_j}$ of any element of $g \in G$.

Suppose first that $G$ is irreducible. In this case by \cite[Theorem~8.1.9]{Ro96}, the fact that elements of $G$ have finitely many possible traces implies that $G$ itself is finite, in fact \cite[Theorem~8.1.9]{Ro96} gives the formula $|G| \leq (e^d)^{d^2} = e^{d^3}$.

Suppose next that $G$ is not irreducible. Then, there is a non-trivial subspace $U \leq V$ closed under the action of $G$. Suppose the dimension of $U$ is $d'$, so the dimension of $V/U$ is $d'' = d - d'$. By induction on dimension, and the fact the exponent cannot increase in subactions and quotients, the subgroup $L'$ ($L''$ resp.) of $G$ that acts trivially on $U$ ($V/U$ resp.) has index at most $e^{d'^3}$ (resp. $e^{d''^3}$). The subgroup $L = L' \cap L''$ of $G$ that fixes both $U$ and $V/U$ then has index at most $e^{d'^3} e^{d''^3} \leq e^{d^3}$. We will show that $L = 1$, which concludes the proof. 

Now, picking any basis of $m$ vectors for $U$ and extending it by $n$ vectors to a basis of $V$, we see that the corresponding matrix representation of $L$ (acting from the right) is by unitriangular matrices: each matrix is a block matrix of the form $\left( \begin{smallmatrix} I_n & N \\ 0 & I_m \end{smallmatrix} \right)$
where $I_m, I_n$ are the $m \times m$ and $n \times n$ identity matrices, respectively, and $N$ is an $n \times m$ matrix.

Suppose now $M \neq I_{m+n}$ is a unitriangular matrix over a field of characteristic $q$, and has order dividing $e$. 
Let $i$ be the leftmost column of $M$ containing a nonzero off-diagonal entry.

Now clearly the exponent of $M$, acting on the subspace of row vectors where all but the $i$ leftmost coordinates are $0$, is divisible by $q$. Thus the exponent of $M$ on the whole space is also divisible by $q$. Since the order of $M$ divides $e$, a power of $p$, the order of $M$ must be $1$, which is a contradiction with $M \neq 1$. This means we must indeed have $L = 1$.
\end{proof}

\begin{lemma}
\label{lem:FreeProductNonLinear}
Let $G$ and $H$ be non-trivial finite groups. If $G$ and $H$ are not $p$-groups for the same prime $p$, then $G^\omega * H^\omega$ is not a subdirect product of finitely many linear groups.
\end{lemma}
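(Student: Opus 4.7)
The plan is to combine the earlier Lemma~\ref{lem:BoundedExp} on bounded-exponent linear $p$-groups with a commutator trick exploiting the normal form theorem for free products. The hypothesis on $G, H$ gives distinct primes $p, q$ with $p \mid |G|$ and $q \mid |H|$: if $G$ is a $p$-group and $H$ is a $q$-group, this is immediate since they are not the \emph{same} prime; otherwise one of the two groups has at least two distinct prime divisors, so we can always pick $p \neq q$. Fix nontrivial $p$- and $q$-subgroups $G_p \leq G$, $H_q \leq H$. Then $G_p^\omega \leq G^\omega$ is an infinite $p$-group of bounded exponent $\exp(G_p)$, and similarly $H_q^\omega \leq H^\omega$ is an infinite $q$-group of bounded exponent.

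Next, I would assume for contradiction that $G^\omega * H^\omega$ embeds subdirectly into a finite product $L_1 \times \cdots \times L_k$ of linear groups, with $L_i \leq \GL(n_i, F_i)$ and $\mathrm{char}(F_i) = \ell_i$. Writing $\pi_i$ for the $i$-th projection, Lemma~\ref{lem:BoundedExp} applied to the restrictions gives: if $\ell_i \neq p$ then $\pi_i(G_p^\omega)$ is finite, and if $\ell_i \neq q$ then $\pi_i(H_q^\omega)$ is finite. Since $p \neq q$, for every single $i$ at least one of the conditions $\ell_i \neq p$, $\ell_i \neq q$ must hold. Partition the indices as $\{1,\ldots,k\} = I \cup J$ with $\pi_i(G_p^\omega)$ finite for $i \in I$ and $\pi_j(H_q^\omega)$ finite for $j \in J$.

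Now for each $i \in I$ the kernel $\ker(\pi_i|_{G_p^\omega})$ has finite index in $G_p^\omega$, so the finite intersection $U = \bigcap_{i \in I} \ker(\pi_i|_{G_p^\omega})$ still has finite index in the infinite group $G_p^\omega$, and is in particular nontrivial. Symmetrically, $V = \bigcap_{j \in J}\ker(\pi_j|_{H_q^\omega})$ is nontrivial. Pick nontrivial $u \in U$ and $v \in V$, and consider $[u,v] = u^{-1}v^{-1}uv$ inside $G^\omega * H^\omega$. For $i \in I$ we have $\pi_i(u) = 1$, hence $\pi_i([u,v]) = 1$; for $j \in J$ we have $\pi_j(v) = 1$, hence $\pi_j([u,v]) = 1$. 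So $[u,v]$ lies in the kernel of every coordinate projection, which is trivial by subdirectness, giving $[u,v] = 1$ in $G^\omega * H^\omega$.

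This contradicts the normal form theorem for free products: with $u \in G^\omega \setminus \{1\}$ sitting in the first factor and $v \in H^\omega \setminus \{1\}$ sitting in the second, the word $u^{-1}v^{-1}uv$ is already reduced of syllable-length four in $G^\omega * H^\omega$ and is therefore nontrivial. The main obstacle is really just the bookkeeping around the characteristics $\ell_i$; once one sees that $p \neq q$ forces every single linear factor $L_i$ to ``kill'' almost all of at least one of $G_p^\omega$ or $H_q^\omega$, the commutator argument collapses any putative subdirect embedding.
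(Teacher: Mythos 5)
Your argument is correct and follows essentially the same route as the paper: pick distinct primes $p \mid |G|$, $q \mid |H|$, apply Lemma~\ref{lem:BoundedExp} to each linear factor to see that every projection kills all but a finite-index (hence nontrivial, by infiniteness of the bounded-exponent groups $G_p^\omega$, $H_q^\omega$) subgroup of one of the two factors, intersect kernels, and derive a contradiction with the free-product normal form. The only cosmetic difference is that the paper first reduces via Cauchy's theorem to $\Z_p^\omega * \Z_q^\omega$ and phrases the final contradiction as two commuting subgroups that should generate a free product, whereas you exhibit a single nontrivial reduced commutator $[u,v]$ mapping to the identity; these are the same argument.
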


In particular the assumption includes the case where one of $G, H$ is not a $p$-group for any $p$.

\begin{proof}
The assumption implies that $p \vert |G|, q \vert |H|$ for some distinct primes $p, q$, so by Cauchy's theorem there exist $g \in G$, $h \in H$ such that $\ord(g) = p, \ord(h) = q$. It is then enough to prove that $\Z_p^\omega * \Z_q^\omega$ is not a subdirect product of finitely many linear groups.

Suppose it is, and let $\Z_p^\omega * \Z_q^\omega \cong K \leq G_1 \times G_2 \times \cdots \times G_\ell$ where the $G_i$ are linear groups. Let the characteristics of the underlying fields be $p_1, \ldots, p_\ell$, respectively. Let $I_p, I_q \subset \{1, \ldots ,\ell\}$ be defined by $i \in I_p \iff p_i = p$ and $i \in I_q \iff p_i = q$. Let $\pi_i$ be the natural projection $\pi_i : K \to G_i$.

By the previous lemma, $\pi_i(\Z_p^\omega)$ is finite for $i \notin I_p$. Thus, the intersection of the kernels of all these maps is some $K_p \leq \Z_p^\omega$ of finite index, in particular $K_p$ is non-trivial. Similarly we have a finite-index subgroup $K_q \leq \Z_q^\omega$. Then $K_p, K_q \leq K$ commute, which is a contradiction, since the subgroup they generate should be a free product $K_p * K_q \leq K$.
\end{proof}

The previous lemma implies in particular that a free product of linear groups need not be linear (or even a subdirect product of finitely many linear groups) when the characteristics of the fields over which they are linear are distinct, since the group $\Z_p^\omega$ is a linear group for every prime $p$ (for example a linear group of RCA by a matrix implementation of Lemma~\ref{lem:FreeProduct}). By \cite{Ni40} (see also \cite{We73}), the group $G^\omega * H^\omega$ is linear if and only if $G^\omega$ and $H^\omega$ are both linear over a field of the same characteristic.

\begin{theorem}
\label{thm:NonLinearNonAmenable}
If $|B|, |C| \geq 2$, then $\PAut[B; C]$ is non-amenable. If further $|C| \geq 3$, then $\PAut[B; C]$ is not a subdirect product of finitely many linear groups.
\end{theorem}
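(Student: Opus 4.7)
Both claims reduce to the embedding machinery of Lemma~\ref{lem:FreeProduct} combined with Lemmas~\ref{lem:NonAmenable} and~\ref{lem:FreeProductNonLinear}; the whole proof amounts to selecting appropriate finite abelian groups $G, H$ to witness each obstruction.

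For non-amenability, assume $|B|, |C| \geq 2$. I would take $G = H = \Z_2$, which act freely on themselves and hence satisfy the cardinality hypotheses $|G| \leq |B|$, $|H| \leq |C|$. Lemma~\ref{lem:FreeProduct} then yields an embedding $\Z_2^\omega * \Z_2^\omega \hookrightarrow \PAut[B; C]$. The group $\Z_2^\omega$ is nontrivial (in fact infinite), so Lemma~\ref{lem:NonAmenable} guarantees that $\Z_2^\omega * \Z_2^\omega$ is non-amenable. Since non-amenability is inherited by overgroups, $\PAut[B; C]$ is non-amenable.

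For the second claim, assume further $|C| \geq 3$ and take $G = \Z_2$, $H = \Z_3$. These act freely on $B$ and $C$ respectively because $|B| \geq 2$ and $|C| \geq 3$, so Lemma~\ref{lem:FreeProduct} provides $\Z_2^\omega * \Z_3^\omega \hookrightarrow \PAut[B; C]$. Since $\Z_2$ and $\Z_3$ are $p$-groups for distinct primes, Lemma~\ref{lem:FreeProductNonLinear} implies $\Z_2^\omega * \Z_3^\omega$ is not a subdirect product of finitely many linear groups. The property of being a subdirect product of finitely many linear groups is inherited by subgroups — if $L \leq K \leq G_1 \times \cdots \times G_\ell$ with each $G_i$ linear and all projections of $K$ surjective, then the projections $\pi_i(L)$ remain linear as subgroups of linear groups, and $L$ embeds subdirectly into their product — so $\PAut[B; C]$ itself cannot be such a subdirect product.

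\textbf{Obstacles.} There is essentially nothing hard left here: Lemma~\ref{lem:FreeProduct} did the concrete engineering of building the free-product copies inside $\PAut[B; C]$, and Lemmas~\ref{lem:NonAmenable} and~\ref{lem:FreeProductNonLinear} handle the abstract obstructions. The only conceptual point worth articulating is that the numerical hypotheses $|B| \geq 2$ and $|C| \geq 3$ match exactly what is needed to realize the cyclic factors $\Z_2$ and $\Z_3$ freely on the respective tracks in the embedding step.
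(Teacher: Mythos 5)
Your proof is correct and follows essentially the same route as the paper: embed $\Z_2^\omega * \Z_2^\omega$ (resp.\ $\Z_2^\omega * \Z_3^\omega$) via Lemma~\ref{lem:FreeProduct} and invoke Lemma~\ref{lem:NonAmenable} (resp.\ Lemma~\ref{lem:FreeProductNonLinear}). The only difference is that you spell out why the subdirect-product property passes to subgroups, which the paper leaves implicit (and which is immediate under its definition of subdirect product as any subgroup of a finite direct product).
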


\begin{proof}
For non-linearity, if $|B| \geq m$ and $|C| \geq k$, then $(\Z/m\Z)^\omega * (\Z/k\Z)^\omega \leq \PAut[B; C]$ by Lemma~\ref{lem:FreeProduct}. If $m = k = 2$, Lemma~\ref{lem:NonAmenable} gives non-amenability. If $m = 2, k = 3$, Lemma~\ref{lem:FreeProductNonLinear} gives the second claim.
\end{proof}


\begin{proposition}
\label{prop:FreeProductAN}
Let $A = B \times C$, and let $G, H$ be abelian groups with $|G| \leq |B|$ and $|H| \leq |C|$. Then
$G^\omega * H^\omega \leq \Aut((B \times C)^\N)$.
\end{proposition}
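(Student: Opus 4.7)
The plan is to mimic the proof of Lemma~\ref{lem:FreeProduct}, replacing backward-looking local rules with forward-looking ones so that the resulting cellular automata act on $A^\N$. As there, fix actions of $G$ on $B$ and $H$ on $C$ with at least one free orbit, and let $1 \in B$ and $1 \in C$ be representatives. For $g \in G$ and $i \in \N$, define
\[ f_{g,i}(x,y)_j = \begin{cases}(g(x_j),y_j) & \text{if } y_{j+i}=1,\\ (x_j,y_j) & \text{otherwise},\end{cases} \]
and define $f_{h,i}$ symmetrically with the roles of the tracks exchanged. Each $f_{g,i}$ is a one-sided cellular automaton with neighborhood $\{0,i\}\subset\N$, hence commutes with the one-sided shift, and it is a bijection because its local rule is permutive in the leftmost coordinate for every fixing of $y_{j+i}$: given the image $(x',y')$ one recovers $y=y'$ and then $x_j = g^{-1}(x'_j)$ or $x'_j$ according to whether $y'_{j+i} = 1$. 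Thus $f_{g,i}, f_{h,i} \in \Aut((B\times C)^\N)$.

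I would then check that these maps generate the intended group. Since $f_{g,i}$ only writes to the first track while reading only the second, all $f_{g,i}$ commute pairwise; for fixed $i$ the assignment $g \mapsto f_{g,i}$ is a homomorphism, and the action on test points of the form $(0^\N, (0^i 1 0^\infty))$ and its variants shows that different indices give independent copies. Hence $\{f_{g,i}\}$ generates a copy $G' \cong G^\omega$, and similarly $\{f_{h,i}\}$ generates $H' \cong H^\omega$. To show these satisfy no further relations, consider a reduced word $f_w = f_\ell \circ \cdots \circ f_1$ alternating between $G'$ and $H'$, and for each $i$ extract from the reduced presentation of $f_i$ the maximal forward offset $r_i \geq 1$ together with the associated nontrivial element $g_i$ or $h_i$. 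Choose $N \geq r_2 + r_3 + \cdots + r_\ell$, and define the test configuration as the point of $A^\N$ which is $\sm{0}{0}$ everywhere except at the positions $N - r_2 - \cdots - r_\ell, \ldots, N - r_2, N, N+r_1$, where we place the markers $\sm{g_\ell^{-1}\cdot 1}{0}$ or $\sm{0}{h_\ell^{-1}\cdot 1}$ (depending on the parity of $\ell$), proceeding rightward through $\sm{g_1^{-1}\cdot 1}{0}$ at position $N$, and finally $\sm{0}{1}$ at position $N+r_1$. This is the mirror image of the two-sided test configuration; the constraint $N \geq \sum_{i=2}^\ell r_i$ ensures all marker positions lie in $\N$.

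The cascade argument now runs exactly as in Lemma~\ref{lem:FreeProduct} but with the ``leftmost active $1$'' moving left at each step: applying $f_1$ turns $x_N$ into $1$ because $y_{N+r_1} = 1$ and the freeness of the orbit makes $g_1^{-1}\cdot 1 \neq 1$, which then makes $x_{(N-r_2)+r_2}=1$ trigger $f_2$ to turn $y_{N-r_2}$ into $1$, and so on inductively. Maximality of $r_i$ and the unborderedness of the initially-zero padding between markers ensure that the sub-maximal terms of $f_i$ cannot reach or overwrite the position at which the new $1$ is being created, by the identical accounting as in the two-sided case. The resulting configuration differs from the initial one, so $f_w$ is nontrivial, and $\langle G', H'\rangle \cong G^\omega * H^\omega$. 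The main obstacle is really a bookkeeping one: the two-sided rule $y_{j-i}$ is not leftmost-permutive on $A^\N$ and so had to be replaced by $y_{j+i}$, which forces the cascade direction to flip and requires choosing $N$ large; beyond this, the argument transfers verbatim.
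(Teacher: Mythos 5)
Your proof is correct and is essentially the paper's argument: the maps you define are exactly the generators of Lemma~\ref{lem:FreeProduct} with their neighborhoods flipped from $\{-i,0\}$ to $\{0,i\}$, which is precisely how the paper obtains elements of $\Aut(A^\N)$. The only difference is presentational — the paper transfers freeness by noting that flipping (i.e.\ conjugating by reversal) does not change the group, while you re-run the mirrored cascade on a right-shifted test configuration in $A^\N$, which is a sound (if more laborious) way to verify the same relations.
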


\begin{proof}
In the construction of Lemma~\ref{lem:FreeProduct}, the generators are involutions and their neighborhoods are contained in $-\N$. Flipping the neighborhoods does not change the group, and gives reversible maps in $\Aut(A^\N)$.
\end{proof}

For $|A| \geq 8$, $\Aut(A^\N)$ is non-linear, as it does not even satisfy the Tits alternative \cite{Sa19a}. By the previous proposition, $\Aut(A^\N)$ is also non-linear for $|A| = 6$.

\subsection{Modifying just one track}
\label{sec:ModOneTrack}

The proof of Lemma~\ref{lem:ControlGrowing} implements the maps $\ctrl{\phi}{F}$ by elements of $\PAut[B; C]$ which only modify the bottom track. This is an interesting example of a finitely generated subgroup of $\PAut(A)$, for any alphabet $A \notin \primes \cup \{4\}$. Out of general interest, we take a brief look at its structure, which is much easier to understand than that of $\PAut(A)$.

This provides a new proof of the two-sided case of \cite{Sa19a}.

\begin{proposition}
\label{prop:PPAutSimulation}
Let $|B|, |C| \geq 2$ and let $\RPAut[B; C] \leq \PAut[B; C]$ be the subgroup generated by the partial shift on the bottom track, and symbol permutations that only modify the bottom track. Then $\RPAut[B; C] \cong P(B^\Z, \Sym(C))$.
\end{proposition}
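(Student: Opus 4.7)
The plan is to decompose both $R[B; C]$ and $P(B^\Z, \Sym(C))$ as semidirect products $\Z \ltimes L$ for a common ``permutation component'' $L$, and then observe that the two $\Z$-actions on $L$ differ only by sign, yielding the isomorphism $\sigma_2 \leftrightarrow \sigma^{-1}$ on the integer parts and identity on $L$.

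First, I would construct the semidirect product decompositions. Every $f \in R[B; C]$ preserves the first track and commutes with the shift, so $f(x, y) = (x, f_x(y))$ with $f_{\sigma x} = \sigma f_x \sigma^{-1}$; an induction over generators (shift factor $1$ for $\sigma_2$ and $0$ for symbol permutations, additive under composition) yields a homomorphism $s : R[B; C] \to \Z$ whose kernel $R^0[B; C]$ consists of the shift-zero elements. The splitting $k \mapsto \sigma_2^k$ gives $R[B; C] \cong \Z \ltimes R^0[B; C]$. Analogously, every element of $P(B^\Z, \Sym(C))$ is of the form $(x, a) \mapsto (\sigma^k x, \pi(x) \cdot a)$ for a unique $k \in \Z$ and locally constant function $\pi : B^\Z \to \Sym(C)$, yielding $P(B^\Z, \Sym(C)) \cong \Z \ltimes N$.

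Next, I would identify both $R^0[B; C]$ and $N$ with the same subgroup $L$ of $\Sym(C)^{B^\Z}$ (under pointwise multiplication). For $f \in R^0[B; C]$, the shift-$0$ condition together with $\sigma$-equivariance of $\{f_x\}$ forces $f_x(y)_i = \pi_f(\sigma^i x)(y_i)$ for a unique locally constant $\pi_f$; the map $f \mapsto \pi_f$ is an injective homomorphism with image generated by functions of the form $x \mapsto \mu(x_k)$ (for $\mu : B \to \Sym(C)$ and $k \in \Z$), coming from $\sigma_2^{-k}$-conjugates of symbol permutations. For $h \in N$, the analogous map $h \mapsto \pi_h$ (with $h(x, a) = (x, \pi_h(x) a)$) identifies $N$ with the subgroup generated by indicators $x \mapsto g^{[x_k = b]}$, coming from $\sigma^{-k} \ctrl{g}{[b]_0} \sigma^k$. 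These two generating sets span the same subgroup $L$, since $\mu(x_k) = \prod_{b \in B} \mu(b)^{[x_k = b]}$ (the factors commute because at any given $x$ only one is nontrivial), and each indicator is a symbol permutation of singleton support.

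Finally, a direct computation shows that conjugation by $\sigma_2$ in $R[B; C]$ acts on $L$ by $\pi \mapsto \pi \circ \sigma$, while conjugation by $\sigma$ in $P(B^\Z, \Sym(C))$ acts by $\pi \mapsto \pi \circ \sigma^{-1}$. Hence the map $(k, \pi)_R \mapsto (-k, \pi)_P$ (which on generators sends $\sigma_2 \mapsto \sigma^{-1}$ and the symbol permutation with data $\mu$ to $\prod_b \ctrl{\mu(b)}{[b]_0}$) is a group isomorphism. The main obstacle is the second step: verifying that the two different generating descriptions of $L$ really cut out the same subgroup of locally constant functions; this reduces to the commuting-product identity above but should be spelled out carefully.
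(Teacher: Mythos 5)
Your proof is correct and is essentially an expanded version of the paper's own (three-sentence) argument: the paper's ``homomorphism that tracks the movement of the second track'' is your $s$, its remark that every cell of the second track behaves independently is your normal form $f_x(y)_i = \pi_f(\sigma^i x)(y_i)$ for shift-zero elements, and ``tracking what happens at the origin'' is your identification of both kernels with the common subgroup $L$, so your explicit $\Z \ltimes L$ matching just spells out what the paper leaves implicit. (One cosmetic caveat: with the paper's conventions $g^h = h^{-1}gh$ and $\sigma(x)_i = x_{i+1}$, conjugation by $\sigma_2$ gives $\pi \mapsto \pi \circ \sigma^{-1}$ and conjugation by the shift in $P(B^\Z,\Sym(C))$ gives $\pi \mapsto \pi \circ \sigma$, i.e.\ the opposite of the signs you state; since only the relative sign matters, your conclusion $\sigma_2 \mapsto \sigma^{-1}$, identity on $L$, is unaffected.)
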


\begin{proof}
Clearly the group $\RPAut[B;C]$ does not change if we replace the partial shift on the bottom track by the one on the top track.
Observe also that every cell on the bottom track behaves independently. The isomorphism simply tracks what happens at the origin.
\end{proof}

This motivates the study of the groups $P(B^\Z, H)$, especially when $H$ is a symmetric group.

\begin{proposition}
\label{prop:SolvableThree}
Let $|B| \geq 2$, let $H \leq \Sym(C)$ be a finite permutation group, and let $G = P(B^\Z, H)$. If $H$ has derived length $\ell$, then $G$ has derived length $\ell+1$. If $H$ is not solvable, $G$ is not virtually solvable.
\end{proposition}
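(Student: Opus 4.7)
The plan is to identify $G$ as a semidirect product $K \rtimes \Z$ and then extract a copy of the wreath product $H \wr \Z$ inside a suitable quotient of $G$ in order to control both the derived length and virtual solvability. I would first track the net $\sigma$-power used to build each element of $G$: this yields a surjective homomorphism $\pi : G \to \Z$ whose kernel $K$ is generated by the $\ctrl{g}{[b]_i}$. Each such generator corresponds, under pointwise multiplication, to the continuous function $B^\Z \to H$ sending $x$ to $g$ if $x_i = b$ and to $1$ otherwise, so $K$ embeds into the pointwise-multiplication group of continuous $H$-valued maps on $B^\Z$. This latter group is a direct union of the finite powers $H^{|B|^n}$ and hence has derived length $\ell$, so $K$ has derived length at most $\ell$. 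Since $G/K \cong \Z$ is abelian we have $G^{(1)} \leq K$ and $G^{(\ell+1)} \leq K^{(\ell)} = 1$, giving the upper bound: the derived length of $G$ is at most $\ell+1$.

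For the matching lower bound, I would restrict the action of $G$ to the infinite $\sigma$-orbit of the pointer configuration $p = {}^\infty 0.10^\infty \in B^\Z$; identifying this orbit with $\Z$ yields a homomorphism $\rho : G \to \Sym(\Z \times C)$ under which $\sigma$ becomes the shift on $\Z$, and $\rho(\ctrl{g}{[1]_0})$ acts as $g$ at the single position $n = 0$ (the unique $n$ with $p_n = 1$). Conjugating by $\rho(\sigma)$ slides this single-position action along $\Z$, so the subgroup of $\rho(G)$ generated by $\rho(\sigma)$ and $\{\rho(\ctrl{g}{[1]_0}) : g \in H\}$ is isomorphic to the wreath product $W = H \wr \Z = H^{\oplus \Z} \rtimes \Z$, where injectivity uses the faithfulness of $H$ on $C$. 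To show $W$ has derived length exactly $\ell+1$, I would project the base group to a single coordinate via $\pi_0 : H^{\oplus \Z} \to H$. Since $W^{(i)} \leq H^{\oplus \Z}$ for $i \geq 1$, and $\pi_0([A,B]) = [\pi_0(A), \pi_0(B)]$ for subgroups $A,B$, induction starting from the base case $\pi_0([\sigma, g \delta_0]) = g$ (for arbitrary $g \in H$) yields $\pi_0(W^{(i)}) = H^{(i-1)}$ for every $i \geq 1$; at $i = \ell$ this is nontrivial, so $W^{(\ell)} \neq 1$, and therefore the derived length of $G$ is at least that of $W$, hence at least $\ell + 1$. The main engineering point is this wreath-product computation, which the single-coordinate projection makes essentially immediate.

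For the non-solvable statement, assume $H$ is not solvable and pick a nonabelian simple subquotient $S$ of $H$; then $S \wr \Z$ is a subquotient of $W$, hence of $\rho(G)$, hence of $G$. Since virtual solvability is preserved under subgroups and quotients, it suffices to show $S \wr \Z$ is not virtually solvable. For any finite-index subgroup $M \leq S^{\oplus \Z}$ with finite quotient $Q = S^{\oplus \Z}/M$, each coordinate inclusion $S_i \hookrightarrow S^{\oplus \Z}$ descends to a homomorphism $\phi_i : S \to Q$ whose image is either trivial or isomorphic to $S$ (by simplicity), and distinct images commute in $Q$. Since $Q$ is finite it contains only finitely many subgroups isomorphic to $S$, so if infinitely many $\phi_i$ were nontrivial then two of the $\phi_i(S)$ would coincide and this common copy of $S$ would centralize itself, forcing $S$ to be abelian and contradicting $Z(S) = 1$. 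Hence $\phi_i = 1$ for a cofinite $J \subseteq \Z$, so $M \supseteq S^{\oplus J}$ contains the nonabelian group $S$ and is therefore not solvable. This shows $S \wr \Z$, and consequently $G$, is not virtually solvable.
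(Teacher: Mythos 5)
Your proof is correct, and its skeleton matches the paper's: the same homomorphism to $\Z$ tracking the shift of the $B$-track, the same observation that the kernel $K$ sits inside a power of $H$ (you use local constancy to place $K$ in a direct union of finite powers $H^{m}$, where the paper embeds $K$ in the uncountable power $H^{\beth_1}$ and invokes bounded commutator width -- your variant is if anything cleaner), and the same pointer configuration ${}^\omega 0.10^\omega$ for the lower bound. Where you diverge is in how the lower bound and non-solvability are extracted: the paper simply notes that the commutators $[\sigma, \ctrl{g}{[1]_0}]$ evaluate to $g$ at the pointer configuration, so $[G,G]$ surjects onto $H$ and the derived length is at least $\ell+1$, and it disposes of the non-solvable case with the terse remark that $H^n$ occurs as a subquotient for all $n$; you instead restrict the action to the shift-orbit of the pointer, exhibit an honest copy of $H \wr \Z$ in the resulting quotient, and compute its derived series via the coordinate projection $\pi_0$. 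Your route costs a little more bookkeeping but buys a uniform mechanism: the same wreath product both gives the sharp lower bound and, via the $S \wr \Z$ subquotient, yields a complete argument for non-virtual-solvability where the paper leaves the final implication to the reader. One cosmetic repair: in the last paragraph you write $Q = S^{\oplus\Z}/M$ for an arbitrary finite-index subgroup $M$, which presupposes $M$ normal; since solvability passes to subgroups, you should first replace $M$ by its normal core (still of finite index) and run the coordinate-homomorphism argument there -- with that adjustment the argument is complete.
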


\begin{proof}
Let $\phi : G \to \Z$ be the homomorphism that tracks the movement of the top track. Then $G$ is $\ker\phi$-by-$\Z$. Let $K = \ker\phi$, and observe that $[G, G] \leq K$ since $\Z$ is abelian.

Elements $g \in K$ do not modify the ``controlling configuration'' $B^\Z$ and only perform permutations on $C$ depending on the controlling word. Thus, $K$ is a subgroup of the uncountable direct product $H^{\beth_1}$ where $\beth_1 = 2^{\aleph_0}$. Whenever every element of $[H, H]$ can be expressed as a bounded product of commutators, we have $[H^X, H^X] = [H, H]^X$ for any set $X$. It follows that when $H$ is finite, the derived length of $H^{\beth_1}$ is the same as that of $H$, so the derived length of $G$ is at most one more than the derived length of $H$.

On the other hand, $[G, G] \leq K$ contains a subgroup mapping homomorphically onto $H$: consider the elements $[\sigma, \ctrl{g}{[1]_0}]$ where $g$ runs over $G$. If $x = {^\omega}0.10{^\omega}$, then $[\sigma, \ctrl{g}{[1]_0}]$ acts as $g$ on $C$, so the homomorphism that maps elements of $K$ to their action under the controlling configuration $x$ is indeed surjective onto $H$. It follows that the derived length of $G$ is at least one more than that of $H$.

If $H$ is not solvable, $G$ is not virtually solvable since it has $H^n$ as a subquotient for all $n$, which can be seen by conjugating elements $\ctrl{g}{[1]_0}$ by shifts and considering the action on elements of the form $(\sigma^i(x), a)$ with again $x = {^\omega}0.10{^\omega}$. 
\end{proof}

\begin{corollary}
\label{cor:SolvableThree}
Let $|B|, |C| \geq 2$. Then $G = P(B^\Z, \Sym(C))$ is (locally finite)-by-$\Z$. If $|C| \in \{2, 3, 4\}$, the group has derived length $|C|$. If $|C| \geq 5$, it is not virtually solvable.
\end{corollary}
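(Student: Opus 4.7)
The plan is to read off nearly everything from Proposition~\ref{prop:SolvableThree} applied to $H = \Sym(C)$, and to add one structural observation for the (locally finite)-by-$\Z$ claim.

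For the structural claim, let $\phi : G \to \Z$ be the homomorphism tracking the movement of the first track that was used in the proof of Proposition~\ref{prop:SolvableThree}, and set $K = \ker \phi$. Since $\phi(\sigma) = 1$, we have $G/K \cong \Z$, so it suffices to show $K$ is locally finite. An element of $K$ does not shift, hence is determined by a continuous map $B^\Z \to \Sym(C)$ which, since $\Sym(C)$ is finite (and discrete), factors through a finite clopen partition of $B^\Z$. Given any finitely generated subgroup $L \leq K$, take the common refinement $\mathcal{P} = \{P_1, \ldots, P_N\}$ of the finite clopen partitions associated with a finite generating set of $L$; this refinement is itself a finite clopen partition because the Boolean algebra generated by finitely many clopen sets is finite. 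Every word in the generators of $L$ then factors through $\mathcal{P}$, so $L$ embeds into the finite group $\Sym(C)^N$. Hence $L$ is finite, proving that $K$ is locally finite.

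For the derived length statements, apply Proposition~\ref{prop:SolvableThree} with $H = \Sym(C)$. The symmetric groups $S_2, S_3, S_4$ have derived lengths $1, 2, 3$ respectively: indeed $S_n^{(1)} = A_n$, $A_3 \cong \Z_3$ is abelian, $A_4^{(1)}$ is the Klein four-group $V_4$ which is abelian, and all intermediate derived subgroups are nontrivial. Proposition~\ref{prop:SolvableThree} then yields derived length $|C|$ for $G$ in each of these cases. For $|C| \geq 5$, the group $\Sym(C)$ contains the nonabelian simple group $A_5$, so it is not solvable, and the second clause of Proposition~\ref{prop:SolvableThree} immediately gives that $G$ is not virtually solvable.

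I do not anticipate a significant obstacle. The only nontrivial ingredient beyond direct invocation of Proposition~\ref{prop:SolvableThree} is the local finiteness of $K$, which rests on the elementary point that finitely many clopen sets in $B^\Z$ generate a finite Boolean algebra and therefore a finite partition; everything else is a standard derived length computation in small symmetric groups.
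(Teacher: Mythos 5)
Your proof is correct and follows essentially the same route as the paper: the decomposition $G = \ker\phi$-by-$\Z$ from Proposition~\ref{prop:SolvableThree}, local finiteness of the kernel, and the derived-length/non-solvability facts for $S_2, S_3, S_4$ and $S_n$, $n \geq 5$, fed back into that proposition. Your clopen-partition argument for local finiteness is just an explicit version of the paper's remark that the kernel sits inside $\Sym(C)^{\beth_1}$, an unrestricted power of a finite group, which is locally finite by the same tuple-of-values argument.
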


\begin{proof}
In the previous proof, it was observed that $G$ is $\ker\phi$-by-$\Z$, and the kernel of $\phi$ is clearly locally finite when $H$ is finite since $H^{\beth_1}$ is locally finite. Thus $G$ is (locally finite)-by-$\Z$. For the claims about derived length, observe that $S_2$ is abelian, $S_3$ is metabelian and $S_4$ has derived length three, while $S_n$ for $n \geq 5$ is non-solvable.
\end{proof}

\begin{proposition}
\label{prop:NonLinear}
If $|B| \geq 2, |C| \geq 3$, then $\RPAut[B; C]$ is not linear.
\end{proposition}

\begin{proof}
The group is easily seen to contain copies of $\Z_2^n$ and $\Z_3^n$ for arbitrarily large $n$, since conjugating $\ctrl{g}{[1]_0}$ where $g$ is a generator of $\Z_k$, by the shift, we obtain a commuting set of maps which generate an internal direct product of copies of $\Z_k$, and the action is faithful, by considering the points $(\sigma^i(x), a)$ with $x = {^\omega}0.10{^\omega}$. This contradicts Lemma~\ref{lem:BoundedExp} by setting $n > 3^{d^3}$ where $d$ is the degree of the purported representation.
\end{proof}

The group is never nilpotent: let $g \in \Sym(C)$ be arbitrary and let $g_0 = \ctrl{g}{[1]_0}$ and $g_{i+1} = [\sigma, g_i]$. Then $g_i({^\omega} 010 {^\omega}, a) = ({^\omega} 010 {^\omega}, g a)$ for all $i$ (and of course if $|C| \geq 3$ already $\Sym(C)$ is not nilpotent).

We recover the two-sided case of \cite{Sa19a}.

\begin{proposition}
\label{prop:NoTits}
If $|B| \geq 2, |C| \geq 5$, then $R[B; C]$ does not satisfy Tits' alternative.
\end{proposition}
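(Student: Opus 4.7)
The plan is to exhibit $R[B;C]$ itself as a counterexample to the Tits alternative: it is a group which is neither virtually solvable nor contains a free subgroup of rank two. Everything we need is already assembled earlier in the section.

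First, Proposition~\ref{prop:PPAutSimulation} identifies $R[B;C]$ with $P(B^\Z, \Sym(C))$, and Corollary~\ref{cor:SolvableThree} says this group is (locally finite)-by-$\Z$ and, since $|C|\geq 5$, not virtually solvable. So the only remaining point is to verify that such a group cannot contain a nonabelian free subgroup.

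For this, suppose $F_2 \leq R[B;C]$, and let $\phi : R[B;C] \to \Z$ be the tracking homomorphism whose kernel $K$ is locally finite (as used in Corollary~\ref{cor:SolvableThree}). Then $\phi(F_2)$ is an abelian (in fact cyclic) quotient of $F_2$, so its derived subgroup $[F_2, F_2]$ lies inside $K$. But $[F_2, F_2]$ is a free group of countably infinite rank, hence contains elements of infinite order and is not locally finite, contradicting $[F_2, F_2] \leq K$. Therefore $R[B;C]$ contains no copy of $F_2$.

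Combining the two facts, $R[B;C]$ is a finitely generated subgroup of itself that is neither virtually solvable nor contains a free group of rank two, which is exactly the statement that Tits' alternative fails for $R[B;C]$. The only subtle step is the (locally finite)-by-$\Z$ observation; everything else is just invoking the preceding results.
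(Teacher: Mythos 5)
Your proposal is correct and follows essentially the same route as the paper: identify $R[B;C]$ with $P(B^\Z,\Sym(C))$ via Proposition~\ref{prop:PPAutSimulation}, invoke Corollary~\ref{cor:SolvableThree} for the (locally finite)-by-$\Z$ structure and the failure of virtual solvability when $|C|\geq 5$, and rule out free subgroups of rank two. The only (harmless) deviation is in that last step: the paper notes that (locally finite)-by-cyclic groups are elementary amenable and hence contain no $F_2$, whereas you argue directly that any copy of $F_2$ would have $[F_2,F_2]$ (a nontrivial free, hence non--locally finite, group) inside the locally finite kernel of the tracking homomorphism -- both arguments are valid and rest on the same structural facts.
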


\begin{proof}
When $|B| \geq 2, |C| \geq 5$, $P(B^\Z, \Sym(C))$ is (locally finite)-by-cyclic, thus elementary amenable, thus does not contain a free group on two generators. It is not virtually solvable by Corollary~\ref{cor:SolvableThree}.
\end{proof}

Note that the group $\RPAut[B; C]$ in Proposition~\ref{prop:PPAutSimulation} is not equal to the group $\RAut(B^\Z \times C^\Z) \cap \PAut[B; C]$ in general: the f.g.-universality proofs in fact build copies of f.g.-universal cellular automata groups precisely inside $\RAut(B^\Z \times C^\Z) \cap \PAut[B; C]$.

\section{Corollaries}



\subsection{The optimal radius for an f.g.-universal group of CA}

One interesting class of naturally occurring RCA groups is obtained by varying $(|A|, N)$ and studying the group $\langle \RCA_N(A^\Z) \rangle$ they generate. 

In the context of the present paper, one could concretely ask, for example, which of these groups are linear and which contain all finitely generated groups of cellular automata. As an immediate corollary of the main theorem, we obtain the minimal contiguous bineighborhood size and biradius for f.g.-universality, for all but finitely many alphabets.

\begin{theorem}
\label{thm:OptimalRadiusProof}
Let $n \geq 2$ and let $G_n = \langle \RCA_1(n) \rangle$. The group $G_2$ is virtually cyclic, while $G_n \leq \RCA(n)$ is f.g.-universal whenever $n \geq 6$ is composite, or $n \geq 36$. If $N = \{a, a+1\}$ for some $a$ then $\langle \RCA_N(n) \rangle$ is not f.g.-universal for any $n$.
\end{theorem}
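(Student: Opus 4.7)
The theorem has four assertions; I address each in turn.

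First, for $G_2$ being virtually cyclic: the plan is a direct enumeration of biradius-$1$ RCA on $\{0,1\}^\Z$. By reversibility and the binary alphabet, only the identity, $\sigma^{\pm 1}$, the bit-flip $\tau$, and their products survive. Since $\tau$ commutes with $\sigma$, we obtain $G_2 \cong \Z \times \Z_2$, which is virtually cyclic.

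For f.g.-universality when $n \geq 6$ is composite, I would factor $n = mk$ with $m \geq 2$ and $k \geq 3$: either $n$ has an odd prime factor $p$ (set $k = p$), or $n = 2^j$ with $j \geq 3$ (set $m = 2, k = 2^{j-1}$). The generators of $\PAut[m;k]$ are symbol permutations (biradius $0$) and partial shifts $\sigma_i$ whose forward neighborhood is $\{0, 1\}$ and inverse neighborhood is $\{-1, 0\}$, giving biradius $1$; all lie in $\RCA_1(n)$. Hence $\PAut[m;k] \leq G_n$, and Theorem~\ref{thm:Main} gives f.g.-universality in $\RCA(n)$.

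For $n \geq 36$, the composite subcase is already covered, so the interesting case is prime $n \geq 37$, where no direct product decomposition of $A$ is available. My plan exploits the extra symbols: fix a subset $C \subset A$ with $|C| = 36 = |B| \cdot |K|$ for some $|B| \geq 2, |K| \geq 3$ (so that $\PAut[B;K]$ is f.g.-universal by Theorem~\ref{thm:Main}), and designate the remaining $n - 36 \geq 1$ symbols as markers. The aim is to extend each generator of $\PAut[B;K]$ to a biradius-$1$ reversible CA on all of $A^\Z$: symbol permutations extend trivially by fixing markers, while the partial shifts demand a genuinely new biradius-$1$ construction, for instance a marker-gated local swap or conditional-shift acting on adjacent $C$-cells, that preserves reversibility on configurations containing markers. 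One then verifies that the subgroup generated contains a conjugate copy of $\PAut[B;K]$, perhaps by recovering the partial shift itself as a commutator involving the full shift $\sigma$ and the gated operations. I expect this biradius-$1$ implementation of the partial shift to be the main technical obstacle, and the threshold $36$ to reflect both the universality requirement $mk \geq 6$ (with $k \geq 3$) from Theorem~\ref{thm:Main} and the alphabet room needed for the marker mechanism.

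Finally, for $\langle \RCA_N(n) \rangle$ not being f.g.-universal when $N = \{a, a+1\}$: every $f \in \RCA_N(n)$ has a one-sided bineighborhood (contained in $\N$ for $a \geq 0$, in $-\N \cup \{0\}$ for $a \leq -1$), and compositions preserve one-sidedness, so $\langle \RCA_N(n) \rangle$ consists entirely of one-sided RCAs. Restriction to the one-sided full shift $A^\N$ (resp.\ $A^{-\N}$) gives an injective homomorphism into $\Aut(A^\N, \sigma)$: injectivity holds because a one-sided RCA whose restriction is the identity has its local rule forced to agree with the identity on every possible window, hence is itself trivial on $A^\Z$. The group $\Aut(A^\N, \sigma)$ is a classical object with strongly restricted structure (studied since Hedlund) whose f.g. subgroups do not include all f.g. subgroups of $\RCA(n)$—in particular the central shift of $\RCA(n)$ is lost on passage to the one-sided restriction—ruling out f.g.-universality.
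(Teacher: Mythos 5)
Your handling of the first two assertions matches the paper: $G_2$ is the group generated by reversible elementary CA, which is $\Z \times \Z_2$, and for composite $n \geq 6$ one factors $n = mk$ with $m \geq 2$, $k \geq 3$, notes that the generators of $\PAut[m;k]$ have biradius $\leq 1$, and invokes Theorem~\ref{thm:Main}. The other two assertions have genuine gaps.

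For $n \geq 36$ (in particular prime $n$), the step you flag as ``the main technical obstacle'' is in fact the whole problem, and your proposed mechanism does not work as stated: a reversible radius-$1$ extension of a partial shift that fixes $n-36$ marker symbols is not available, because shifting the $B$-track of a maximal marker-free block either destroys information at the block boundary (so it is not injective) or must wrap the block around cyclically, which requires knowing where the block ends and hence has unbounded radius. The paper avoids extending $\PAut$-generators altogether. Writing $n = k^2 + m$, it decomposes $A = B^2 \sqcup C$ with $|B| = k$, treats the symbols of $C$ as immovable walls, and reads the $B^2$-symbols as two $B$-tracks folded into a conveyor belt at each wall; then every radius-$1$ RCA on the alphabet $B$ is simulated by a radius-$1$ RCA on $A$ (a radius-$1$ window sees the wall and knows to wrap), giving an embedding $\langle \RCA_1(k) \rangle \hookrightarrow \langle \RCA_1(n) \rangle$. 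Since f.g.\ subgroups of $\RCA(k)$ and $\RCA(n)$ coincide, the set $U$ of f.g.-universal sizes satisfies $U^2 + \N \subseteq U$, and $6 \in U$ gives $[36,\infty) \subseteq U$; this is the actual source of the threshold $36 = 6^2$, not ``alphabet room'' for markers. As written, your argument for this case is not a proof.

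For $N = \{a, a+1\}$, your reduction of $\langle \RCA_N(n) \rangle$ to a subgroup of $\Aut(A^\N)$ is the paper's, but your concluding obstruction is not valid. That ``the central shift is lost'' is irrelevant: f.g.-universality only requires abstract embeddings of f.g.\ subgroups, and $\Z$ does embed into $\Aut(A^\N)$ for $|A| \geq 3$, so nothing is excluded this way. The paper concludes with the theorem of Boyle--Franks--Kitchens \cite{BoFrKi90} that no subgroup of $\Aut(A^\N)$ contains a copy of every finite group, while every finite group is an f.g.\ subgroup of $\RCA(n)$; that exhibits a concrete f.g.\ subgroup which cannot embed. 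You need such a result, not an appeal to the ``strongly restricted structure'' of the one-sided automorphism group.
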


\begin{proof}
In the case $|A| = 2$, $N = \{-1,0,1\}$ we obtain the so-called elementary cellular automata. It is known that the group generated by reversible elementary cellular automata is $\Z \times \Z/2\Z$, generated by the shift and the bit flip.

Let now $U$ be the set of all numbers $n$ such that $\langle \RCA_1(n) \rangle$ is f.g.-universal in $\RCA(n)$. By Theorem~\ref{thm:Main}, $U$ contains all composite numbers except possibly $4$, since $\PAut(n) \leq \langle \RCA_1(n) \rangle$.

Let now $k, m \in \N$ be arbitrary. Then if $|A| = n = k^2 + m$, we can decompose the alphabet $A$ as $A = B^2 \sqcup C$ where $|B| = k$. A radius-$1$ cellular automaton can treat elements of $C$ as walls (which are never modified), and use the elements of $B^2$ as two $B$-tracks, wrapping into a conveyor belt next to elements of $C$. From this we obtain an embedding of the group $\langle \RCA_1(k) \rangle$ in $\langle \RCA_1(n) \rangle$. Since $\langle \RCA(k) \rangle$ has the same subgroups as $\langle \RCA(n) \rangle$, the f.g.-universality of $\langle \RCA_1(k) \rangle$ in $\langle \RCA(k) \rangle$ then implies f.g.-universality of $\langle \RCA_1(n) \rangle$ in $\langle \RCA(n) \rangle$. Thus, $U^2 + \N \subset U$, so $6 \in U$ implies $[36, \infty) \subset U$.

For the last claim, consider a contiguous neighborhood of size $2$. Such a neighborhood is either entirely in $\N$ or in $-\N$, so if $f$ and $f^{-1}$ both have such a neighborhood for all generators, they can be seen as elements of $\Aut(A^\N)$. No subgroup of $\Aut(A^\N)$ contains every finite group \cite{BoFrKi90}, so such a group cannot be f.g.-universal.
\end{proof}

In general, as $|A|$ grows the subgroups of $\langle \RCA_{\{0,1\}}(A^\Z) \rangle$ range over all finitely generated groups of one-sided cellular automata by standard blocking arguments, so these groups can be very interesting, even though they are never f.g.-universal in $\Aut(A^\Z)$.

The last claim is only true for contiguous neighborhoods of size two, and the theorem does not apply to e.g. $N = \{-1, 1\}$. Indeed, for the purpose of group embeddings one can consider the case $N = \{-1, 1\}$ to be the case of ``radius-$1/2$ RCA'', and by a standard blocking argument (see \cite{MoBo97}) and with a little bit of work one can indeed generate f.g.-universal groups this way (for some alphabets).

\subsection{The minimal number of generators}

\begin{theorem}
\label{thm:MinimalGenerating}
Let $G' \in \{\Z * \Z_2, \Z_2 * \Z_2 * \Z_2\}$ and $m \geq 2$. Then there is a homomorphism $\phi : G' \to \RCA(m)$ such that $\phi(G')$ is f.g.-universal. 
\end{theorem}

\begin{proof}
First consider $G' = \Z * \Z_2$. It is enough to show the statement for some $m$. We let $B$ with $|B| \geq 2$ be arbitrary and $C = \{0, 1\}$ and use the alphabet $A = B \times C$, $m = |A|$. Let $H$ be any f.g.-universal f.g.\ subgroup of $\RCA(m)$. By Lemma~\ref{lem:EvensUnderFIsEnough}, for any large enough $\ell$ and unbordered word $|w| = \ell$, if $G \leq \RCA(B \times C)$ contains
\[ \ctrl{\pi}{[w]_i} \mbox{ and } \ctrl{\pi}{[ww]_i} \]
for all $\pi \in \Alt(\{0,1\}^\ell)$ and all $i \in \Z$, then $G$ contains a copy of $H$.

Now, let $w \in B^\ell$ be unbordered where $\ell$ is as above, and sufficiently large. We construct a $2$-generated group $G$ containing the maps $\ctrl{\pi}{[w]_i}$ and $\ctrl{\pi}{[ww]_i}$, such that one of our generators is an involution.

Let $F : \{0,1\}^n \to \{0,1\}^n$ be a function such that $F^2 = \id$ and defining $f : \{0,1\}^\Z \to \{0,1\}^\Z$ by $f(x.vy) = x.F(v)y$, the maps $\sigma^i \circ f \circ \sigma^{-i}$ generate the group of all self-homeomorphisms $g$ of $\{0,1\}^\Z$ for which there exists $m$ such that
\[ \forall x \in \{0,1\}^\Z: \forall |i| \geq m: g(x)_i = x_i \]
holds. Such $F$ exists, in fact by Theorem~3 in \cite{Sa18c} one can pick $n = 3$ and
\[ F(a \cdot b \cdot c) = \left\{ \begin{array}{ll}
a \cdot b \cdot c, & \mbox{ if } a = 0 \wedge c = 1, \\
a \cdot (1 - b) \cdot c, & \mbox{ otherwise.} \end{array} \right. \]
(where $\cdot$ denotes concatenation).

Our generators are the partial shift on the first track, i.e.\ $\sigma_1(x, y) = (\sigma(x), y)$, and the map $f_0 = \ctrl{F}{[w]_0}$ (so we need $\ell \geq n$). Let
\[ G = \langle \sigma_1, \ctrl{F}{[w]_0} \rangle. \]
Note that $f_i = \ctrl{F}{[w]_{-i}} = f_0^{\sigma_1^{-i}} \in G$.

Let $F'$ be a universal family of reversible gates in the sense of Lemma~\ref{lem:UniversalGates}, i.e.\ $F'$ is a finite set of even permutations of sets of the form $\{0,1\}^k$ such that every even permutation of $\{0,1\}^m$ for any large enough $m$ can be decomposed into application of permutations in $F'$ in contiguous subsequences $\{i, i+1, ..., i+k-1\}$ of the indices $\{0,1,...,m-1\}$ (indeed one can fix $k = 3$). Note that $\{F\}$ need not be such a set: we may need to use more than $m$ coordinates to build permutations of $\{0,1\}^m$ using translates of $F$.

For any $i$, since $w$ is unbordered and of length $\ell$, the maps $f_i, f_{i+1}, ..., f_{i+\ell-n}$ compose in the natural way, just like translates of $F$ inside $\{0,1\}^\ell$. By universality of $F$, as long as $\ell$ is large enough, the maps $\ctrl{f'}{[w]_{-i}}$, $f' \in F'$, are generated. By the universality property of $F'$, we then have $\ctrl{\pi}{[w]_i} \in G$ for all $\pi \in\Alt(\{0,1\}^\ell)$ for all $i \in \Z$.

Now, we need to show that also $\ctrl{\pi}{[ww]_i} \in G$. For this, pick a large \emph{mutually unbordered} set $U \subset \{0,1\}^\ell$, i.e.\ any set such that $u_1, u_2 \in U$ have no nontrivial overlaps. For example we can pick $U = 0^{\ell-k-2} 1 \{0,1\}^k 1$
for any $k$ such that $k < \frac{\ell-4}{2}$. By the above, we can perform any even permutation of $U$ under occurrences of $w$. For two permutations $\pi_1, \pi_2 \in \Alt(\{0,1\}^\ell)$, with supports contained in $U$, a direct computation shows
\[ [ \ctrl{\pi_1}{[w]_i}, \ctrl{\pi_2}{[w]_{i + \ell}} ] = \ctrl{[\pi_1, \pi_2]}{[ww]_i}, \]
so for $|U| \geq 5$ (take $\ell$ large enough so that $2^{\lfloor \frac{\ell-4}{2}\rfloor - 1} \geq 5$) we have
$\ctrl{\pi}{[ww]_i} \in G$
for all $\pi \in \Alt(\{0,1\}^\ell)$ with support contained in $U$.

For two permutations $\pi_1, \pi_2 \in \Alt(\{0,1\}^\ell)$, a direct computation shows
\[ (\ctrl{\pi_1}{[ww]_i})^{\ctrl{\pi_2}{[w]_i}} = \ctrl{(\pi_1^{\pi_2})}{[ww]_i} \]
so, since $\Alt(\{0,1\}^\ell)$ is simple (supposing $\ell \geq 3$), $G$ in fact contains $\ctrl{\pi}{[ww]_i} \in G$ for all $\pi \in \{0,1\}^\ell$. This concludes the proof since $G$ is clearly a quotient of $G' = \Z * \Z_2$, as it was generated by an RCA of infinite order and an involution.

Let us then show the claim for $G' = \Z_2 * \Z_2 * \Z_2$. For this, pick $B = \{0,1\}$ and add a third component $B' = \{0,1\}$ on top, so the alphabet becomes $A = B' \times B \times C$, $m = 8$. Thinking of $x \in (B' \times B \times C)^\Z$ as having three binary tracks, and writing $\sigma_0$ and $\sigma_1$ for the shifts on the first two tracks, it is easy to see that $\sigma_0^{-1} \times \sigma_1$ is the composition of two involutions, say $\sigma_0^{-1} \times \sigma_1 = a \circ b$. Take $f_0$ the same map as before, but ignoring the $B'$-track entirely.

In the group $G = \langle a, b, f_0\rangle$ we now have the elements $\id_{B'^\Z} \times \pi^{[w]_i}$ and $\id_{B'^\Z} \times \pi^{[ww]_i}$ for any $w$ unbordered of large enough length $\ell$ and $i \in \Z$, by the same proof as above. Clearly the group they generate is isomorphic to the subgroup of $(B \times C)^\Z$ generated by $\pi^{[w]_i}$ and $\pi^{[ww]_i}$, thus Lemma~\ref{lem:EvensUnderFIsEnough} gives f.g.-universality.
\end{proof}

\subsection{Sofic shifts and the perfect core}

\begin{lemma}
Let $|B| = m, |C| = n$. Then the maps $(a,a',b,b') \mapsto (b,a',a,b')$ and $(a,a',b,b') \mapsto (a,b',b,a')$ are in $\Alt(B \times C \times B \times C)$ if and only if $2 | \binom{m}{2} n$ and $2|\binom{n}{2} m$.
\end{lemma}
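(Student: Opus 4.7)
The plan is to treat each of the two maps as an involution, compute its cycle type explicitly, and then read off the parity from a straightforward count of non-fixed elements.

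Let me name the two maps $\phi_1, \phi_2 \in \Sym(B \times C \times B \times C)$. First I would observe that $\phi_1(a,a',b,b') = (b,a',a,b')$ is an involution whose fixed points are exactly the tuples with $a = b$, and which otherwise pairs $(a,a',b,b')$ with $(b,a',a,b')$. The number of non-fixed tuples is therefore $m(m-1) \cdot n \cdot n$, since we choose an ordered pair $(a,b)$ with $a \neq b$ from $B$ and arbitrary $a', b' \in C$. Grouping these into transpositions gives
\[
\phi_1 = \prod \text{(transpositions)}, \qquad \text{number of transpositions} = \frac{m(m-1)n^2}{2} = \binom{m}{2} n^2.
\]
Since $n^2 \equiv n \pmod 2$, the sign of $\phi_1$ is $(-1)^{\binom{m}{2} n}$, so $\phi_1 \in \Alt(B \times C \times B \times C)$ iff $2 \mid \binom{m}{2} n$.

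The argument for $\phi_2(a,a',b,b') = (a,b',b,a')$ is completely symmetric: $\phi_2$ fixes the tuples with $a' = b'$, and pairs the remaining $m^2 \cdot n(n-1)$ tuples into $\binom{n}{2} m^2$ transpositions. Using $m^2 \equiv m \pmod 2$, the sign of $\phi_2$ is $(-1)^{\binom{n}{2} m}$, so $\phi_2$ is even iff $2 \mid \binom{n}{2} m$. Combining the two characterizations yields the claimed equivalence.

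There is no real obstacle here — the only thing one must be careful about is not double-counting the transpositions (the factor $1/2$ coming from the fact that $(a,b)$ and $(b,a)$ index the same transposition) and the reduction $n^2 \equiv n$, $m^2 \equiv m \pmod 2$ that turns the raw count into the binomial expressions in the statement.
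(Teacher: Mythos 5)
Your proof is correct and follows essentially the same route as the paper: both compute the parity of each involution by counting its transpositions ($\binom{m}{2}n^2$ and $\binom{n}{2}m^2$ respectively) and then reduce via $n^2 \equiv n$, $m^2 \equiv m \pmod 2$. Your write-up just makes the fixed-point count and the parity reduction explicit where the paper leaves them implicit.
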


\begin{proof}
The permutation $(a,a',b,b') \mapsto (b,a',a,b')$ is even if and only if the number of unordered pairs $\{(a,a',b,b'), (b,a',a,b')\}$ is even. The number of such pairs is $\binom{m}{2} n^2$. Symmetrically $(a,a',b,b') \mapsto (a,b',b,a')$ is even if and only if $2|\binom{n}{2} m^2$.
\end{proof}

\begin{lemma}
\label{lem:Commutator}
Suppose $m, n \geq 2$,
$2 | \binom{m}{2} n$ and $2|\binom{n}{2} m$.
Then $\PAut[m; n; m; n]$ has a perfect subgroup $G$ generated by six involutions, such that $\PAut[m; n] \hookrightarrow G$.
\end{lemma}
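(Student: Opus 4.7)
I would embed $\PAut[m;n]$ into $\PAut[m;n;m;n]$ via the \emph{anti-diagonal} action $\iota(f)(x_1, x_2) = (f(x_1), f^T(x_2))$, where $f^T(x) := (f(x^T))^T$ is the CA conjugated by reversal. Since reversal is an anti-automorphism of CA composition, $(fg)^T = f^T g^T$, so $\iota$ is a well-defined injective homomorphism. On the generators of $\PAut[m;n]$ this gives $\iota(\pi) = \pi \times \pi$ (symbol permutations are reversal-invariant) and $\iota(\sigma_i) = \sigma_i \sigma_{i+2}^{-1}$ (reversal inverts shifts). Crucially, each $\iota(\sigma_i)$ has trivial abelianization image in $\PAut[m;n;m;n]$ (since $s_B$-conjugacy forces $[\sigma_1] = [\sigma_3]$) and each $\iota(\pi) = \pi \times \pi$ is even as a permutation of $(B\times C)^2$, so $\iota(\PAut[m;n])$ lies in the commutator subgroup of the ambient group --- this is what makes it eligible to sit inside a perfect subgroup.

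By the preceding lemma, the parity hypotheses guarantee that the track-swap permutations $s_B$ (swap tracks $1\leftrightarrow 3$) and $s_C$ (swap tracks $2\leftrightarrow 4$) are even symbol permutations of $\PAut[m;n;m;n]$, and both are involutions. Set $\alpha_B := \sigma_1 \sigma_3^{-1} s_B$ and $\alpha_C := \sigma_2 \sigma_4^{-1} s_C$: since $s_B$ inverts $\sigma_1 \sigma_3^{-1}$ by conjugation (and analogously for $s_C$), these are involutions, and the products $s_B \alpha_B = \iota(\sigma_1)$, $s_C \alpha_C = \iota(\sigma_2)$ recover the embedded shifts. For the remaining two involutions $\tau_1, \tau_2$, I would choose even permutations of $(B\times C)^2$ so that $\langle s_B, s_C, \tau_1, \tau_2\rangle$ contains $\iota(\Sym(B\times C))$: the strategy is to take $\tau_1, \tau_2$ whose conjugates under $s_B, s_C$ together with their products form a hypergraph-connected family of transpositions on $(B\times C)^2$, and then appeal to Lemma~\ref{lem:Hypergraph} to assemble all of $\iota(\Sym(B\times C))$. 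Set $G := \langle s_B, s_C, \alpha_B, \alpha_C, \tau_1, \tau_2 \rangle$; by construction $\iota(\PAut[m;n]) \leq G$.

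Perfectness of $G$ would be established by exhibiting each of the six generators as a product of commutators inside $G$. The four symbol-permutation-type involutions $s_B, s_C, \tau_1, \tau_2$ lie in $\Alt((B\times C)^2)$, perfect for $mn \geq 3$, and their commutator decompositions can be arranged to use elements of $G$ thanks to the generating structure. The shift-type involutions $\alpha_B, \alpha_C$ are treated via identities such as $[\alpha_B, s_B] = (\sigma_1 \sigma_3^{-1})^2$ together with auxiliary commutators $[\iota(\pi), \iota(\sigma_i)]$ (which live in $G$ because $\iota(\PAut[m;n]) \leq G$) to reconstitute $\alpha_B$ and $\alpha_C$ as products of commutators of $G$-elements.

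\textbf{Main obstacle.} The most delicate step is the choice of $\tau_1, \tau_2$: we need four involutions ($s_B, s_C, \tau_1, \tau_2$) to generate $\iota(\Sym(B\times C)) \cong \Sym(mn)$, even though $\Sym(mn)$ is not two-generated by involutions for $mn \geq 3$. This forces one to lean heavily on the conjugation action of the swap involutions to enlarge the orbit of $\tau_1, \tau_2$ into a hypergraph-connected transposition family. A secondary challenge is the perfectness bookkeeping: while each generator is in the commutator subgroup of the ambient $\PAut[m;n;m;n]$, exhibiting it as a commutator \emph{inside} $G$ (rather than using ambient elements not in $G$) requires care, especially for $\alpha_B, \alpha_C$ whose infinite-order anti-diagonal shift content cannot be decoupled from the swap content without invoking $\tau_1, \tau_2$.
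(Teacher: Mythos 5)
Your embedding $\iota(f)=(f,f^T)$ is fine, but the construction around it has two genuine gaps, and they are not just bookkeeping. First, the choice of $\tau_1,\tau_2$: you need the four involutions $s_B,s_C,\tau_1,\tau_2$ to generate a group containing all diagonal permutations $\pi\times\pi$ (equivalently, it would suffice to contain $\Alt(B\times C\times B\times C)$), but you only describe a ``strategy'' and flag it yourself as the main obstacle; Lemma~\ref{lem:Hypergraph} is about $3$-cycles and does not by itself produce such a pair, since $s_B$ and $s_C$ are two fixed commuting involutions and two further involutions generate only a dihedral group on their own. Second, and more seriously, perfectness of your $G$ is not established and is genuinely in doubt because you put the shift content \emph{into the generators}: $\alpha_B=\sigma_1\sigma_3^{-1}s_B$ has infinite order, and your only identity, $[\alpha_B,s_B]=(\sigma_1\sigma_3^{-1})^2$, produces the \emph{square} of the anti-diagonal shift. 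Since $G^{\mathrm{ab}}$ is a quotient of $\Z_2^6$, exactly the odd part of $\sigma_1\sigma_3^{-1}$ is what you must kill, and nothing in your sketch does so: the relation $s_B(\sigma_1\sigma_3^{-1})s_B=(\sigma_1\sigma_3^{-1})^{-1}$ only forces $2[\sigma_1\sigma_3^{-1}]=0$ in any abelian quotient, leaving a possible $\Z_2$ obstruction, and the ``auxiliary commutators $[\iota(\pi),\iota(\sigma_i)]$'' are not shown to reconstitute $\alpha_B$. Writing $\sigma_1\sigma_3^{-1}=[\sigma_1^{-1},s_B]$ uses $\sigma_1$, which is not in $G$, so it does not help.

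The paper avoids both problems by never using an infinite-order generator. Its six involutions are three symbol-permutation involutions generating $\Alt(B\times C\times B\times C)$ (possible by the parity hypotheses, which make the track swaps even) together with their conjugates by $\sigma_1\circ\sigma_2$. Commutators of the swaps with their shifted conjugates then automatically yield the \emph{doubled} anti-diagonal shifts $\sigma_1^2\sigma_3^{-2}$ and $\sigma_2^2\sigma_4^{-2}$, and the copy of $\PAut[m;n]$ is embedded by simulation on the even cells (shift generator $\mapsto\sigma_1^2\sigma_3^{-2}$, symbol permutation $\mapsto\pi\times\pi$), i.e.\ it is $\iota$ composed with squaring the shifts rather than your $\iota$ itself. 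Perfectness is then immediate: every generator lies in $\Alt(A)$ or its conjugate, both perfect subgroups of $G$. If you want to salvage your scheme, the cleanest fix is to drop $\alpha_B,\alpha_C$ as generators, take the six involutions to be symbol permutations generating $\Alt(A)$ and their $\sigma_1\sigma_2$-conjugates, and accept the doubled shift in the embedded copy — that is exactly the paper's route.
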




\begin{proof}
Let $|B| = m, |C| = n$ and $A = B \times C \times B \times C$. The symbol permutations $\updownarrow_B, \updownarrow_C$ defined by ${\updownarrow}_B(x,x',y,y') = (y,x',x,y')$ and ${\updownarrow}_C(x,x',y,y') = (x,y',y,x')$ are in $\Alt(A)$ under the conditions by the above lemma.
Define $\mathrlap{\swarrow}{\nearrow}_B = {\updownarrow}_B^{\sigma_1 \circ \sigma_2} = {\updownarrow}_B^{\sigma_1} \in \PAut[B; C; B; C]$ and $\mathrlap{\swarrow}{\nearrow}_ C= {\updownarrow}_C^{\sigma_1 \circ \sigma_2} = {\updownarrow}_C^{\sigma_2}  \in \PAut[B; C; B; C]$. 
Define also
\[ \sigma_B = [\updownarrow_B, \mathrlap{\swarrow}{\nearrow}_B] = \sigma_1^2 \circ \sigma_3^{-2} \in \PAut[B; C; B; C] \]
\[ \sigma_C = [\updownarrow_C, \mathrlap{\swarrow}{\nearrow}_C] = \sigma_2^2 \circ \sigma_4^{-2} \in \PAut[B; C; B; C] \]
For every symbol permutation $\pi \in \Sym(B \times C)$, the diagonal permutation $\pi \times \pi : A \to A$ is even. 


It is well-known that $\Alt(A)$ is generated by three involutions, so let $|F| = 3$ be any set of symbol permutations corresponding to such a generating set. Then $F \cup F^{\sigma_1 \circ \sigma_2}$ generates all of $\updownarrow_B$, $\updownarrow_C$, $\mathrlap{\swarrow}{\nearrow}_B$ and $\mathrlap{\swarrow}{\nearrow}_B$, thus it generates $\sigma_B$ and $\sigma_C$.

Now, it is easy to see that $\sigma_B$ and $\sigma_C$ and the symbol permutations $\pi \times \pi$ simulate four independent copies of $\PAut[B; C]$ in $\PAut[B; C; B; C]$: one in the even cells of the top track, one in the odd cells, and similarly two copies on the bottom track. Thus the group $G = \langle F \cup F^{\sigma_1 \circ \sigma_2} \rangle$ contains an embedded copy of $\PAut[B; C]$. Since $\Alt(A)$ is perfect, all the generators of $G$ can be written as a product of commutators of elements in $\Alt(A)$, so also $G$ is perfect.
\end{proof}

\begin{theorem}
\label{thm:Sofic}
Let $X$ be a sofic shift. Then the following are equivalent:
\begin{itemize}
\item The group $\Aut(X)$ has a perfect subgroup generated by six involutions containing every f.g.\ subgroup of $\Aut(A^\Z)$ for any alphabet $A$.
\item The group $\Aut(X)$ is not elementarily amenable.
\item $X$ has uncountable cardinality.
\end{itemize}
\end{theorem}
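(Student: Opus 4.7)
The plan is to establish the cycle of implications $(1) \Rightarrow (2) \Rightarrow (3) \Rightarrow (1)$, where I number the three bulleted conditions in the order they appear.

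For $(1) \Rightarrow (2)$: for any alphabet $A$ with $|A| \geq 2$, $\Aut(A^\Z)$ contains a non-abelian free subgroup (classical, and a corollary of Proposition~\ref{prop:NoTits}). Under $(1)$, $\Aut(X)$ therefore contains a free subgroup, so it is non-amenable, a fortiori not elementarily amenable.

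For $(2) \Rightarrow (3)$ I would argue the contrapositive: a countable sofic shift $X$ has elementarily amenable $\Aut(X)$. Such a shift is a finite union of periodic orbits together with finitely many shift-orbits asymptotic to them, and has finite Cantor--Bendixson rank. The action of $\Aut(X)$ on the CB-filtration $X = X^{(0)} \supset X^{(1)} \supset \cdots \supset X^{(k)} = \emptyset$ yields a finite tower of extensions whose successive quotients act either on a finite set of orbits (finite image) or by shift-offsets on finitely many transient orbits (at most virtually $\Z^d$ image). Iterated extensions of finite and virtually abelian groups are elementarily amenable.

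The main step is $(3) \Rightarrow (1)$. Take $m = 2$, $n = 4$: then $\binom{m}{2} n = 4$ and $\binom{n}{2} m = 12$ are both even, and $m \geq 2$, $n \geq 3$, so by Theorem~\ref{thm:Main} $\PAut[2; 4]$ is f.g.-universal in $\RCA(8)$. Since by \cite{KiRo90} the f.g.\ subgroups of $\RCA(k)$ are independent of $k \geq 2$, $\PAut[2;4]$ contains a copy of every f.g.\ subgroup of $\Aut(A^\Z)$ for any $A$. Lemma~\ref{lem:Commutator} then produces a perfect subgroup $G \leq \PAut[2;4;2;4] \leq \Aut(D^\Z)$ (with $|D| = 64$), generated by six involutions and containing an isomorphic copy of $\PAut[2;4]$, hence of every f.g.\ subgroup of $\Aut(A^\Z)$. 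It remains to embed $G$ as a group into $\Aut(X)$. For this I would use that any uncountable sofic shift has positive entropy (zero-entropy sofic shifts being countable) and hence contains a mixing SFT of positive entropy; after passing to a power of the shift if necessary, Krieger's embedding theorem embeds $D^\Z$ as a subshift of that SFT. A marker/conveyor-belt construction in the spirit of Lemma~\ref{lem:EvensUnderFIsEnough} then lifts $\Aut(D^\Z)$ into $\Aut(X)$: markers in $X$ carve out clopen scratchpad regions in which $D^\Z$-configurations are encoded, each $f \in \Aut(D^\Z)$ acts on these encoded configurations, and the extension is the identity elsewhere. The image of $G$ under this lift is still perfect and still generated by six involutions, as both properties are preserved by injective homomorphisms, yielding $(1)$.

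The main obstacle will be the lifting step in $(3) \Rightarrow (1)$. Turning an abstract subshift embedding $D^\Z \hookrightarrow X$ into a group embedding $\Aut(D^\Z) \hookrightarrow \Aut(X)$ requires designing a marker structure inside $X$ that endows every $f \in \Aut(D^\Z)$ with a canonical continuous extension to all of $X$, and this has to be compatible with the possibly non-SFT nature of the sofic shift $X$. Once this marker construction is in place, the remainder of the proof is essentially bookkeeping given the heavy machinery already built in Theorem~\ref{thm:Main} and Lemma~\ref{lem:Commutator}.
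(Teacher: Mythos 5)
Your overall architecture coincides with the paper's: the same choice $(|B|,|C|)=(2,4)$, verified to satisfy the parity hypotheses of Lemma~\ref{lem:Commutator}, which together with Theorem~\ref{thm:Main} yields a perfect group on six involutions containing $\PAut[2;4]$ and hence every f.g.\ group of RCA; the countable case is handled via elementary amenability, which excludes free subgroups and hence the first item. The difference is that the paper does not re-derive the two symbolic-dynamics inputs you try to reconstruct: it invokes the standard embedding theorems of \cite{KiRo90,Sa16a} for $\Aut(A^\Z) \hookrightarrow \Aut(X)$ whenever $X$ is an uncountable sofic shift, and cites \cite{SaSc16a} (with the sofic case obtainable by adapting \cite{Sa15}) for elementary amenability of $\Aut(X)$ when $X$ is countable.

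Both of your reconstructions have concrete problems. For the lifting step, Krieger's embedding theorem requires a mixing SFT inside $X$ of entropy strictly larger than $\log 64$, and an uncountable sofic shift, while of positive entropy, can have entropy as small as you like; moreover ``passing to a power of the shift'' does not repair this, because an automorphism of $(X,\sigma^n)$ need not commute with $\sigma$ and so need not lie in $\Aut(X)$ at all. The marker constructions of \cite{KiRo90,Sa16a} are designed exactly to circumvent this: they encode an arbitrary finite alphabet into long blocks between marker words, $\sigma$-equivariantly, rather than embedding the full shift $D^\Z$ as a subshift of $X$. So your proposed route for the step you yourself flag as the main obstacle would fail as stated, although the statement you need is true and is what the paper cites. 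For the countable case, your structural claim is inaccurate: already the sofic shift with language $0^*1^*0^*$ has infinitely many distinct orbits asymptotic to its two fixed points, and countable sofic shifts can have arbitrarily large finite Cantor--Bendixson rank (e.g.\ shifts of the form $0^*1^*2^*\cdots k^*$), so the two-level picture ``finitely many orbits asymptotic to finitely many periodic orbits'' does not hold; a correct argument has to induct on the full finite CB filtration (or follow \cite{Sa15,SaSc16a}), even though the conclusion of elementary amenability is the right one.
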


\begin{proof}
Suppose first that $X$ is uncountable. Standard embedding theorems \cite{KiRo90,Sa16a} show that $\Aut(A^\Z) \hookrightarrow \Aut(X)$ for any alphabet $A$. The choice $|B| = 2, |C| = 4$ satisfies the assumptions of Lemma~\ref{lem:Commutator}. Let $A = B \times C \times B \times C$, so that $\PAut[B; C]$ is f.g.-universal and contained in $\PAut(A)$. Let $G$ be the group provided by Lemma~\ref{lem:Commutator}. 
Then $G$ is a finitely generated perfect subgroup of $\PAut(A)$, generated by six involutions, which contains every group of cellular automata on any alphabet. We have $G \leq \PAut(A) \leq \Aut(A^\Z) \hookrightarrow \Aut(X)$. 

For any countable subshift $X$, $\Aut(X)$ is elementarily amenable by \cite{SaSc16a}, thus cannot contain a free group, thus cannot contain every finitely generated subgroup of $\Aut(A^\Z)$ for any non-trivial alphabet $A$. This paper is unpublished, but the case of countable sofics can be obtained by adapting \cite[Proposition 2]{Sa15}.
\end{proof}

Note that we do not claim that $\Aut(X)$ has an f.g.-universal f.g.\ subgroup for any $X$ other than a full shift. See Question~\ref{q:Sofic}.

The \emph{perfect core} $c(G)$ of a group $G$ is the largest subgroup $H$ such that $H = [H, H]$. The group $c(G)$ is contained in the commutator subgroup of $G$ and contains every perfect subgroup of $G$. Note that the conclusion of the previous theorem is stronger than simply finding an f.g.-universal f.g.\ subgroup of the perfect core, since a perfect group can contain non-perfect subgroups.

\subsection{The abstract statement}

\abstractstatement*

\begin{proof}
Pick $G \leq \PAut(64)$ as in the proof of Theorem~\ref{thm:Sofic}, so $G$ is finitely generated and perfect, and contains every finitely generated group of cellular automata on every alphabet.

Groups of RCA on full shifts are residually finite and f.g.\ groups of RCA have decidable word problems \cite{BoLiRu88}, so $G$ has these properties. The periodicity of RCA is undecidable \cite{KaOl08}. The f.g.-universality of $G$, together with the fact our proofs are algorithmic, then implies that it has an undecidable torsion problem.

Since the Tits alternative does not hold in $\Aut(A^\Z)$ \cite{Sa19a} and all f.g.\ graph groups are subgroups of $\Aut(A^\Z)$ \cite{KiRo90}, the same results hold for $\mathcal{G}$. The set $\mathcal{G}$ has the same closure properties as the set of subgroups of $\Aut(A^\Z)$, which by \cite{KiRo90} include finite extensions and by \cite{Sa16a} include direct products and free products.
\end{proof}

\subsection{Finitely subgenerated cellular automata groups}
\label{sec:NFGinFG}


We make some basic observations about which (not necessarily finitely generated) subgroups of $\Aut(A^\Z)$ can be embedded in our f.g.-universal f.g.\ groups, based on abstract arguments only.

For any group $G$, write $\mathrm{S}G$ for its set of subgroups, and write $\mathrm{SF}G$ for its \emph{finitely subgenerated subgroups}, i.e.\ those subgroups $H \leq G$ such that $H \leq K$ for some finitely generated subgroup of $G$. write $\mathcal{G}' = \mathrm{SF} \; \RCA(A)$ for some non-trivial alphabet $A$ (recall that this does not depend on $A$).

\begin{lemma}
Let $G$ be a group. We have $\mathrm{S} G = \mathrm{SF} G$ if and only if $G$ has a universal finitely generated subgroup, i.e.\ $G \hookrightarrow K \hookrightarrow G$ for some f.g.\ group $K$.
\end{lemma}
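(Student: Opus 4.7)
The lemma is a tautology once one parses the definitions carefully, so the plan is really to unpack the two directions of the equivalence separately, each using a one-line construction.

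For the backward direction, suppose we are given a finitely-generated subgroup $K \leq G$ together with an embedding $\iota : G \hookrightarrow K$. To show $\mathrm{S}G = \mathrm{SF}G$, I would let $H \leq G$ be arbitrary and simply compose the inclusion $H \hookrightarrow G$ with $\iota$ to obtain $H \hookrightarrow K$. Since $K$ is itself a finitely-generated subgroup of $G$, and $\iota(H) \leq K$, the image $\iota(H) \in \mathrm{SF}G$; hence $H$ (considered up to isomorphism, as is the operative convention for $\mathrm{S}$ and $\mathrm{SF}$ here) lies in $\mathrm{SF}G$. The reverse inclusion $\mathrm{SF}G \subseteq \mathrm{S}G$ is immediate from the definition.

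For the forward direction, the hypothesis $\mathrm{S}G = \mathrm{SF}G$ is applied to the largest possible element: $G$ itself. By hypothesis $G \in \mathrm{SF}G$, which by definition gives an embedding of $G$ into a finitely-generated subgroup $K \leq G$. Setting this $K$ as the candidate "universal f.g. subgroup" yields the required diagram $G \hookrightarrow K \hookrightarrow G$, the second arrow being simply the inclusion $K \leq G$.

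There is really no obstacle here — the only subtlety worth flagging in the write-up is that $\mathrm{S}G$ and $\mathrm{SF}G$ must be read up to abstract isomorphism (otherwise $G \in \mathrm{SF}G$ would force $K = G$ literally and collapse the statement to "$G$ is f.g."). Once that convention is fixed, both directions are one-line arguments, and the lemma serves only as a bookkeeping device to motivate the subsequent discussion of which non-finitely-generated subgroups of $\Aut(A^\Z)$ can be embedded into the f.g.-universal subgroups constructed earlier.
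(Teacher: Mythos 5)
Your proof is correct and is essentially the paper's own argument: apply the hypothesis to $G \in \mathrm{S}G$ to get the embedding into an f.g. subgroup $K$, and conversely compose $H \hookrightarrow G \hookrightarrow K$ for arbitrary $H \leq G$. Your remark about reading $\mathrm{S}G$ and $\mathrm{SF}G$ up to isomorphism is consistent with how the paper implicitly uses these notions (e.g.\ when it says $\mathrm{SF}\,\RCA(A)$ does not depend on $A$).
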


\begin{proof}
If $\mathrm{S} G = \mathrm{SF} G$, then since $G \in \mathrm{S} G$ there is a finitely generated subgroup $K \leq G$ containing $G$. If $G \hookrightarrow K \leq G$ then also $H \hookrightarrow K$ for all subgroups $H \leq G$.
\end{proof}

\begin{lemma}
Suppose $G$ has an f.g.-universal f.g.\ subgroup. If $\mathrm{S} G$ is closed under countable free products (resp. countable direct products), then so is $\mathrm{SF} G$. If $\mathrm{S} G$ is closed under direct products and finite extensions, then so is $\mathrm{SF} G$.
\end{lemma}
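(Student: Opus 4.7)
The driving observation is that, by f.g.-universality of the f.g.\ subgroup $K \leq G$, the class $\mathrm{SF}G$ coincides (up to isomorphism) with $\mathrm{S}K$: any $H \in \mathrm{SF}G$ sits inside some f.g.\ $K' \leq G$, and $K' \hookrightarrow K$ by universality; conversely $K$ is itself f.g., so every subgroup of $K$ lies in $\mathrm{SF}G$. Each of the three closure statements is thereby reduced to transferring a closure property from $\mathrm{S}G$ to $\mathrm{S}K$.

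For the third clause, I would dispatch the two operations separately. Given $H_1, H_2 \in \mathrm{SF}G$ with $H_i \leq K_i \leq G$ and each $K_i$ f.g., the inclusion $H_1 \times H_2 \leq K_1 \times K_2$ and the f.g.-ness of $K_1 \times K_2$ together with closure of $\mathrm{S}G$ under direct products place $K_1 \times K_2$ in $\mathrm{S}G$ as an f.g.\ subgroup, certifying $H_1 \times H_2 \in \mathrm{SF}G$; induction handles finitely many factors. For a finite extension $H$ of $H_1 \in \mathrm{SF}G$ with $H_1 \leq K_1 \leq G$ f.g.\ and $F = H/H_1$ finite, the Krasner--Kaloujnine theorem yields $H \hookrightarrow H_1 \wr F \leq K_1 \wr F = K_1^F \rtimes F$; the wreath product is f.g., and combining closure under direct products (applied to $K_1^F$) with closure under finite extensions (applied to the semidirect product) places it in $\mathrm{S}G$, giving $H \in \mathrm{SF}G$.

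For the two countable-product clauses, the natural plan mirrors the finite case. For $H_i \in \mathrm{SF}G$ ($i \in \N$) each embedded in $K$, assemble the embeddings into either $*_i H_i \hookrightarrow *_i K$ or $\prod_i H_i \hookrightarrow \prod_i K$, and invoke closure of $\mathrm{S}G$ under the relevant countable operation to place $*_i K$ (resp.\ $\prod_i K$) in $\mathrm{S}G$. The hard part, and what I expect to be the main obstacle, is the final step: it asks for the non-f.g.\ group $*_i K$ (resp.\ $\prod_i K$) to itself lie in $\mathrm{SF}G$, equivalently to embed into $K$ by universality, and f.g.-universality only a priori controls f.g.\ subgroups of such a countable power. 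A natural route is to bring in a Higman--Neumann--Neumann-style $2$-generator countable-universal group: if such a group can be placed in $\mathrm{S}G$ under the standing hypotheses, then being f.g.\ it already embeds into $K$, and, containing every countable group, it in particular absorbs all the countable powers of $K$ that are needed.
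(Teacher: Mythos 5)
Your reduction of $\mathrm{SF}G$ to $\mathrm{S}K$ up to isomorphism is correct, and your handling of the third clause (direct products and finite extensions) is essentially the paper's argument; the only small difference is that the paper takes ``finite extension'' to mean a group containing $H$ with finite index, not necessarily normally, and therefore embeds it into $H \wr S_n$ via the coset (monomial) embedding rather than Krasner--Kaloujnine, after which it passes through $K \wr S_n$ exactly as you do with $K_1 \wr F$.

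The two countable clauses, however, contain a genuine gap, precisely at the step you flagged, and the repair you propose does not exist. A two-generated group is countable and so has only countably many finitely generated subgroups up to isomorphism, while there are $2^{\aleph_0}$ pairwise non-isomorphic two-generated groups; hence there is no ``HNN-style $2$-generator countable-universal group'' containing every countable group. The per-instance HNN theorem does give some two-generated group $B$ containing the particular countable group $*_i K$ (or the restricted product), but nothing in the standing hypotheses places $B$ in $\mathrm{S}G$: closure under countable free or direct products does not yield HNN extensions or amalgams. The paper's route avoids this entirely: use the hypotheses only to put the \emph{finitely generated} groups $K * K$ and $K \times K$ into $\mathrm{S}G$, hence into $\mathrm{SF}G$, hence by f.g.-universality to get self-embeddings $K * K \hookrightarrow K$ and $K \times K \hookrightarrow K$; iterating such a self-embedding (writing $a, b : K \to K$ for the two coordinate copies and placing $H_n$ inside $b^{n-1}(a(K))$, the finite stages form an increasing chain whose union is the countable free product, resp.\ the countable restricted direct product) embeds any countable free product or direct union of subgroups of $K$ into $K$ itself, which is finitely generated -- this is the role of the citation to \cite{Sa16a}. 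Note also that ``countable direct product'' must be read in this restricted sense (as with the paper's $G^\omega$): a full product of infinitely many nontrivial groups is uncountable and cannot lie below a finitely generated group, so your $\prod_i H_i \hookrightarrow \prod_i K$ should be a direct-sum statement. With the self-embedding iteration in place of the universal two-generated group, your outline goes through.
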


\begin{proof}
For finite direct and free products, the result follows since $K^n$ and $K * K * \cdots * K$ are finitely generated for any f.g.-universal f.g.\ group $K$. For infinite ones, observe that in particular $K * K \leq K$ and $K \times K \leq K$ for any f.g.-universal f.g.\ $K$, which implies that the set of subgroups of $K$ is also closed under countable free and direct products \cite{Sa16a}.

Every finite extension of a group $H$ is a subgroup of $H \wr S_n$ for large enough $n$, and conversely $H \wr S_n$ has $H^n$ as a finite-index subgroup. Suppose $H \in \mathrm{SF} G$, i.e. $H \leq K$ for an f.g.-universal f.g.\ $K$. Since $\mathrm{S} G$ is closed under direct products and finite extensions, the wreath product of $K$ by any symmetric group $S_n$ is in $\mathrm{SF} G$, thus $H \wr S_n \leq K \wr S_n \leq K$, implying that every virtually-$H$ group is in $\mathrm{SF} G$.
\end{proof}

\begin{theorem}
\label{thm:ClosureProps}
The class $\mathcal{G}'$ is closed under countable free and direct products and finite extensions.
\end{theorem}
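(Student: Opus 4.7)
The plan is to derive the theorem as a direct corollary of the preceding lemma by verifying its two hypotheses for $G = \RCA(A)$, where $A$ is any nontrivial alphabet.

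The first hypothesis, that $\RCA(A)$ admits an f.g.-universal f.g. subgroup, is supplied by Theorem~\ref{thm:Main}: for instance $\PAut[2;3] \leq \RCA(6)$ is f.g.-universal. By \cite{KiRo90} the class of f.g. subgroups of $\RCA(A)$ is independent of the nontrivial alphabet $A$, and consequently so is $\mathrm{SF}\,\RCA(A) = \mathcal{G}'$ (a subgroup is finitely subgenerated iff it embeds into some f.g. member of the same class), so we may choose $A$ so as to match our concrete f.g.-universal subgroup.

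The second hypothesis, that $\mathrm{S}\,\RCA(A)$ is closed under countable free products, countable direct products, and finite extensions, is a standard result of \cite{Sa16a}; the same citation is invoked in the proof of the preceding lemma and in the earlier Abstract Statement theorem. Applying the preceding lemma then yields at once the three closure properties of $\mathcal{G}'$ asserted by the theorem.

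The argument is entirely formal: all nontrivial engineering has been done in Theorem~\ref{thm:Main} and packaged by the two preceding lemmas, so no genuine obstacle remains. The only subtlety worth flagging is that one must distinguish the closure of the ambient class $\mathrm{S}\,G$ from the derived closure of $\mathrm{SF}\,G$, which is exactly what the preceding lemma was formulated to bridge; having it stated separately is what reduces the present theorem to a one-line invocation.
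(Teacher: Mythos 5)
Your proposal is correct and takes essentially the same route as the paper, which states Theorem~\ref{thm:ClosureProps} as an immediate consequence of the two preceding lemmas exactly as you do: Theorem~\ref{thm:Main} supplies the f.g.-universal f.g. subgroup of $\RCA(A)$, and the closure of the class of subgroups of $\RCA(A)$ under free and direct products and finite extensions is quoted from the literature (the paper cites \cite{Sa16a} for products and, in the abstract-statement proof, \cite{KiRo90} for finite extensions). No further argument is needed beyond this assembly.
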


From these closure properties, we obtain also that the free product of all finite groups, constructed as a CA group in \cite{Al88}, is in $\mathcal{G}'$.

We conjecture that all countable locally finite residually finite groups are in $\mathcal{G}'$, as it seems clear that the construction in \cite{KiRo90} can be performed directly. We do not know whether the group constructed in \cite{Br93} is in $\mathcal{G}'$.

\section{Questions}
\label{sec:Questions}

\subsection{Automorphism groups of full $\Z$-shifts}

The following question was mentioned in the introduction.

\begin{question}
\label{q:Universal}
Let $A$ be a non-trivial finite alphabet. Does $\Aut(A^\Z)$ have a finitely generated subgroup containing $\Aut(A^\Z)$ as a subgroup? Is the commutator subgroup $[\Aut(A^\Z), \Aut(A^\Z)]$ such a group?
\end{question}


The latter question is two questions in one: the author does not know whether the commutator subgroup is finitely generated (this has been previously asked in \cite{Sa17b}), and does not know whether it is a universal one. The question is also open at least for all transitive SFTs, but outside full shifts we do not even know when $\Aut(X)$ and $\Aut(Y)$ have the same subgroups (or even finitely generated subgroups).

In Theorem~\ref{thm:OptimalRadius}, we do not know the f.g.-universality status of $\langle \RCA_1(n) \rangle$ for
\[ n \in \{3,4,5,7,11,13,17,19,23,29,31\}. \]
We have not looked at these cases in detail.

Throughout the article, we have allowed the use of any symbol permutation. One obtains a large class of RCA groups by varying the permutation group allowed.

\begin{question}
\label{q:VaryingPermutationGroup}
Let $G \leq \Sym(B_1 \times B_2 \times ... \times B_k)$ be a permutation group. What can be said about the group $\PAut_G[B_1; B_2; \cdots; B_k]$ generated by partial shifts and symbol permutations in $G$?
\end{question}

If we restrict to even permutations, then for many alphabets, in particular whenever $|B|$ and $|C|$ are large enough, the arguments of the present paper can be used to establish f.g.-universality.


For a finitely generated group $G = \langle g_1, ..., g_k \rangle$, we say $f \in G$ is \emph{distorted} if $\langle f \rangle$ is infinite and satisfies $\mathrm{wn}(f^n) = o(n)$ where
\[ \mathrm{wn}(g) = \min \{\ell \;|\; \exists i_1,i_2,...,i_{\ell}: g = g_{i_1}g_{i_2}...g_{i_{\ell}} \} \]
It is open whether $\Aut(A^\Z)$ contains elements which are distorted in some finitely generated subgroup \cite{CyFrKrPe18} (a related question is asked in \cite{KiRo90}). Note that if $G$ is finitely generated and $f \in G$ is distorted in a subgroup $f \in H \leq G$, then $f$ is also distorted in $G$. Thus, by our main result, we can use $\PAut(A)$ as the canonical subgroup, and state the problem equivalently without quantification over f.g.\ subgroups:

\begin{question}
Does $\PAut(A)$ contain distortion elements for some $A$? 
\end{question}

By the universality result, the question stays equivalent if we fix $|A| = 6$. In \cite{CyFrKr19}, a notion of \emph{range-distortion} is defined. This notion is implied by distortion, and occurs in automorphism groups of all uncountable sofic shifts \cite{GuSa17}. Since our group-embeddings are by simulation, it is not hard to show that $\PAut(A)$ also contains range-distorted elements.

Finitely generated linear groups can contain distorted elements, as for example the discrete Heisenberg group (of invertible unitriangular $3 \times 3$ matrices over $\Z$) has distorted cyclic center. However, distortion cannot happen in linear groups over fields with positive characteristic by \cite[Lemma 2.10]{PuWu17}, so $\PAut(A)$ with $|A| = 4$ does not contain distortion elements. 

Two other questions we do not know the answer to are whether $\PAut(A)$ contains torsion (i.e. periodic) finitely generated infinite subgroups, or whether $\PAut(A)$ contains subgroups of intermediate growth, discussed previously in \cite{Sa19a}. Again $\PAut[2;2]$ cannot have such subgroups by linearity.

Another natural direction to take is to further study the poset $\mathcal{P}$ of finitely generated subgroups of $\Aut(A^\Z)$ up to embeddability (and identifying $G \approx H \iff G \hookrightarrow H \hookrightarrow G$). For example, this poset contains all finitely generated free groups as one element. This poset embeds in a natural way in the lattice $\mathcal{L}$ whose elements are subgroup- and isomorphism-closed collections of f.g.\ subgroups of $\Aut(A^\Z)$, under inclusion. The lattice $\mathcal{L}$ obviously has a maximal element, namely the family of all f.g.\ subgroups of $\Aut(A^\Z)$. Our main result states that this top element is actually in $\mathcal{P}$.


Finally, it would also be of interest to study universality for submonoids of $\End(A^\Z)$, the endomorphism monoid of $A^\Z$ consisting of all cellular automata under composition, taking the identity CA $\id$ as the monoid identity. The invertible part of a universal or f.g.-universal submonoid must then be universal or f.g.-universal in $\Aut(A^\Z)$, so this problem is at least as hard as the problem studied here.

One can also consider the semigroup of cellular automata without fixing an identity element, and define universality and f.g.-universality similarly, allowing any idempotent CA to play the role of the identity CA.

\subsection{Universality in other groups}

In this section, we ask universality questions for some of our favorite groups and make some basic observations. Of course, one can ask about universality in other groups, and we invite the reader to add their favorite groups to the list.

We begin by noting that there are some well-known non-finitely generated group that have universal finitely generated subgroups:

\begin{example}
The abelian group $(\Q^d, +)$ is not f.g.\ but $\Z^d$ is an f.g.-universal f.g.\ subgroup. On the other hand, $(\R^d, +)$ has no f.g.-universal f.g.\ subgroup or a countable universal subgroup (consider a Hamel basis). \qee
\end{example}

\begin{example}
\label{ex:Free}The (non-abelian) free group on $\aleph_0$ (free) generators has a universal finitely generated subgroup, namely the free group on two generators, since free groups with finitely or countably many generators all embed into each other. The free group on $\aleph_1$ generators does not have a universal finitely generated subgroup (since f.g.\ groups are countable), but the free group on two generators is an f.g.-universal subgroup of it, for the same reason as in the previous case. \qee
\end{example}

In the examples, the reason for non-universality was rather trivial (cardinality). Is there a countable group containing an f.g.-universal f.g.\ subgroup which is not universal, or (equivalently) is there one containing an f.g.-universal f.g.\ subgroup but no universal f.g.\ subgroup? We expect that the answers are positive, but do not know such examples (though $\Aut(A^\Z)$ could be an example for all we know).

The groups $\Aut(A^\N)$ for different $|A|$ have a different set of subgroups in general, as there are strong restrictions on even the finite subgroups \cite{BoFrKi90}. Thus, we cannot expect a finitely generated subgroup that contains a copy of every cellular automata group on every alphabet, unlike in the two-sided case. However, for a fixed alphabet we do not see a reason why f.g.-universality would not be possible. (The case $|A| = 2$ is trivial \cite{He69}.)

\begin{question}
Is there an (f.g.-)universal f.g.\ subgroup of $\Aut(A^\N)$ for some finite alphabet $|A| \geq 3$?
\end{question}

Very little is known about embeddings between automorphism groups of higher-dimensional subshifts, even two-dimensional full shifts, for example it is not known whether we can have $\Aut(A^{\Z^{d'}}) \leq \Aut(B^{\Z^d})$ for $d' > d$, $|A|,|B| \geq 2$, and whether $\Aut(\{0,1\}^{\Z^2})\leq \Aut(\{0,1,2\}^{\Z^2})$ (see \cite{HoOpen}). The following question seems to lead into similar problems.

\begin{question}
Let $d \geq 2$. Does $\Aut(A^{\Z^d})$ have an (f.g.-)universal f.g.\ subgroup?
\end{question}

Another obvious direction to look at are sofic shifts. For some simple sofics it is easy to show there are no f.g.-universal subgroups, and some even have finitely generated automorphism groups, but for most of them we have no idea. In particular, we do not know the answer for any mixing SFT which is not a full shift.

\begin{question}
\label{q:Sofic}
Let $X$ be a sofic shift. When does $\Aut(X)$ have an (f.g.)-universal f.g.\ subgroup?
\end{question}

This problem does not seem feasible at the moment: It is not known when $\Aut(Y)$ embeds into $\Aut(X)$ for mixing SFTs $X, Y$. Trying to find non-trivial self-embedding of subgroups of $\Aut(X)$ into $\Aut(X)$ runs into similar difficulties.

The author does not know another class of subshifts where such a universality question would be interesting. We note, however, that the non-f.g.\ automorphism groups of minimal subshifts constructed in \cite{BoLiRu88,Sa14d} both have an f.g.-universal f.g.-subgroups, namely $\langle \sigma \rangle$. In \cite{BoLiRu88}, $(\Q, +)$ is constructed, in \cite{Sa14d}, the dyadic rationals.

It is shown in \cite{BeHyMa19} that the asynchronous rational group (consisting of all asynchronous finite-state transductions defining a self-homeomorphism of $A^\N$, for a finite alphabet $A$) is not finitely generated, so one can ask for universality results. The set of subgroups of the asynchronous rational group does not depend on the alphabet.

As for synchronous automata groups, as with one-sided subshifts, one needs to fix a single alphabet, or even finite groups pose a problem for universality (since there is no boundedly-branching rooted tree where all finite groups act faithfully by automorphisms). When one alphabet is fixed, the group of all synchronous automata transductions is not finitely generated, as it has infinite abelianization (consider the signs of permutations performed on different levels or the tree).

\begin{question}
Is there an (f.g.-)universal automata group over a finite alphabet $A$? Does the asynchronous rational group have an (f.g.-)universal f.g.\ subgroup?
\end{question}

Especially in connection with Theorem 3.3 of \cite{BaKaNe10}, one could also ask whether there are universal automata groups within automata groups of bounded activity.

\begin{question}
Is there an (f.g.-)universal f.g.\ subgroup of the group of reversible Turing machines of \cite{BaKaSa16}?
\end{question}

A large finitely generated subgroup of ``elementary Turing machines'' is constructed in the planned extended version of \cite{BaKaSa16}, but the author does not know whether it is f.g.-universal.

Topological full groups are another class where such a question can be asked. It seems plausible that marker arguments can be used to prove universality results at least on full shifts. 

\begin{question}
Let $X$ be a subshift. When does the topological full group of $X$ have an (f.g.-)universal f.g.\ subgroup?
\end{question}

Some other groups with similar symbolic flavor are Thompson's $V$ \cite{CaFlPa96} and $2V$ \cite{Br04a}, but these groups are finitely generated.

All the groups considered above of course act on Cantor space. The homeomorphism group of Cantor space or any manifold of positive finite dimension is uncountable, and thus not finitely generated. The homeomorphism group of Cantor space contains uncountably many non-isomorphic f.g.\ subgroups, and thus cannot contain an f.g.-universal subgroup, but it is not immediately clear to the author what happens with, for example, manifolds of positive finite dimension.


\begin{question}
Let $X$ be a topological space. When does the homeomorphism group of $X$ contain an (f.g.)-universal f.g.\ subgroup?
\end{question}




\section*{Acknowledgements}

I have studied the linear part (in the CA sense) of $\PAut(A)$ for $|A| = 4$ with Pierre Guillon and Guillaume Theyssier, and the linear case of Lemma~\ref{lem:FreeProduct} is due to Theyssier. I thank Thibault Godin and Ilkka T\"orm\"a for several interesting discussions. Question~\ref{q:VaryingPermutationGroup} was suggested by Godin. I thank Ilkka T\"orm\"a for spotting some typos. I thank Laurent Bartholdi for pointing out that the automorphism group of a boundedly branching tree cannot contain copies of every finite group. The fact that the group of all synchronous automata transductions is not finitely generated was shown to the author by Ivan Mitrofanov, by studying orbits of eventually periodic points. I thank the anonymous referee for useful suggestions and corrections.

\bibliographystyle{plain}
\bibliography{../../../bib/bib}{}

\end{document}